\title[Initial Tukey Structure Below A Stable Ordered-Union]{Initial Tukey Structure Below A Stable Ordered-Union Ultrafilter}
\author{Tan Özalp}
\address{Department of Mathematics, University of Notre Dame, Notre Dame, IN 46556, USA}
\email{aozalp@nd.edu}
\subjclass[2020]{Primary 03E05, 03E04, 05D10, 06A07; Secondary 03E02, 54H05, 54D80.}
\keywords{Ultrafilter, Tukey, cofinal types, canonical Ramsey theory, block sequences, Rudin-Keisler.}
\newcommand{\leqnomode}{\tagsleft@true}
\newcommand{\reqnomode}{\tagsleft@false}
\newlist{SubItemList}{itemize}{1}
\setlist[SubItemList]{label={$-$}}
\let\OldItem\item
\newcommand{\SubItemStart}[1]{%
    \let\item\SubItemEnd
    \begin{SubItemList}[resume]%
        \OldItem #1%
}
\newcommand{\SubItemMiddle}[1]{%
    \OldItem #1%
}
\newcommand{\SubItemEnd}[1]{%
    \end{SubItemList}%
    \let\item\OldItem
    \item #1%
}
\newcommand*{\SubItem}[1]{%
    \let\SubItem\SubItemMiddle%
    \SubItemStart{#1}%
}%
\newtheorem{theorem}{Theorem}[section]
\newtheorem{theorem1}[theorem]{Claim}
\theoremstyle{definition}
\newtheorem*{definition1}{Remark}
\newtheorem{lemma}[theorem]{Lemma}
\newtheorem{corollary}[theorem]{Corollary}
\theoremstyle{definition}
\newtheorem{definition}[theorem]{Definition}
\newtheorem{theorem2}[theorem]{Fact}
\theoremstyle{definition}
\newtheorem{definition2}[theorem]{Question}
\theoremstyle{definition}
\begin{document}

\begin{abstract}

Answering a question of Dobrinen and Todorcevic asked in \cite{dobtod}, we prove that below any stable ordered-union ultrafilter $\mathcal{U}$, there are exactly four nonprincipal Tukey classes: $[\mathcal{U}], [\mathcal{U}_{\operatorname{min}}], [\mathcal{U}_{\operatorname{max}}]$, and $[\mathcal{U}_{\operatorname{minmax}}]$. This parallels the classification of ultrafilters Rudin-Keisler below $\mathcal{U}$ by Blass in \cite{Blass}. A key step in the proof involves modifying the proof of a canonization theorem of Klein and Spinas \cite{KlSp} for Borel functions on $\mathrm{FIN}^{[\infty]}$ to obtain a simplified canonization theorem for fronts on $\mathrm{FIN}^{[\infty]}$, recovering Lefmann's (\cite{Lef}) canonization for fronts of finite uniformity rank as a special case. We use this to classify the Rudin-Keisler classes of all ultrafilters Tukey below $\mathcal{U}$, which is then applied to achieve the main result.

\end{abstract}

\maketitle

\section{Introduction}

Tukey introduced the notion of \textit{Tukey ordering} to study the theory of convergence in topology \cite{MR2515}. The study of the Tukey ordering of a particular class of partial orders, namely that of the class of \textit{ultrafilters}, started with Isbell's \cite{MR201316} and independently Juhász's \cite{MR216467} constructions of ultrafilters with maximum Tukey degree. This study was revived with Milovich \cite{MR1500094}, and continued with a detailed investigation by Dobrinen and Todorcevic in \cite{dobtod}. Further major developments include \cite{diltod}, \cite{r1}, \cite{r2}, \cite{BDR}, \cite{dobell}, \cite{dobellinfty}, \cite{DMT}, \cite{dilshe}, \cite{kuzrag18} \cite{chains}, \cite{dobcont}, \cite{benhamou2023tukeytypesfubiniproducts}, \cite{benhamou2023commutativitycofinaltypes}, \cite{benhamou2024cofinaltypesultrafiltersmeasurable}, \cite{benhamou2024diamondprinciplestukeytopultrafilters}, and the survey papers \cite{dobsurvey}, \cite{dense}, and \cite{kuzeljevic2024orderstructureppointultrafilters}.

Two types of the important questions asked about the structure of the Tukey types of ultrafilters are the following: Which partial orders can be embedded into the Tukey ordering of the ultrafilters? Given an ultrafilter $\mathcal{U}$, what is the structure of the Tukey/Rudin-Keisler types of ultrafilters below $\mathcal{U}$? In Section $6$ of \cite{dobtod}, Dobrinen and Todorcevic obtained partial results on the initial Tukey structure below a stable ordered-union ultrafilter, but left the question of exact classification open. We shall answer this question in this paper.

In order to discuss our results, we now provide some definitions. Ultrafilters considered throughout the paper are assumed to be on countable base sets. For ultrafilters $\mathcal{U}$ and $\mathcal{V}$, we say \textit{$\mathcal{V}$ is Tukey reducible to $\mathcal{U}$} (or \textit{$\mathcal{V}$ is Tukey below $\mathcal{U}$}), and write $\mathcal{V} \leq_T \mathcal{U}$, if there is a map $f : \mathcal{U} \to \mathcal{V}$ which sends every filter base for $\mathcal{U}$ to a filter base for $\mathcal{V}$. We say that \textit{$\mathcal{U}$ and $\mathcal{V}$ are Tukey equivalent}, and write $\mathcal{U} \equiv_T \mathcal{V}$, if both $\mathcal{U} \leq_T \mathcal{V}$ and $\mathcal{V} \leq _T \mathcal{U}$. We call the collection of ultrafilters Tukey equivalent to $\mathcal{U}$ the \textit{Tukey type of $\mathcal{U}$}, and denote it by $[\mathcal{U}]$. We call the collection of Tukey types Tukey below $\mathcal{U}$ the \textit{initial Tukey structure below $\mathcal{U}$}.

The first initial Tukey structure result was Todorcevic's \footnote{It is explained in the introduction of \cite{diltod} that the result is due to Todorcevic.} proof of the Tukey minimality of Ramsey ultrafilters in \cite{diltod}, which mirrors the result of Blass that Ramsey ultrafilters are Rudin-Keisler minimal (\cite{blassramseyminimal}). Later, the initial Tukey structures below ultrafilters forced by to Laflamme's partial orders (\cite{laf}), and the isomorphism classes inside these Tukey classes were classified in \cite{r1} and \cite{r2}, analogously to Laflamme's results for the Rudin-Keisler order. Further work includes the classification of the initial Tukey structure and the isomorphism classes inside the Tukey classes below ultrafilters forced by $\mathcal{P}(\omega^k)/{\mathrm{Fin}}^{\otimes k}$ for all $k \geq 2$ in \cite{dobell}, and below ultrafilters associated to topological Ramsey spaces constructed from Fraïssé classes in \cite{DMT}. See the survey \cite{dense} for the full results.

We let $\mathrm{FIN}$ denote the set of all finite nonempty subsets of $\omega$ and define the maps $\operatorname{sm} : \mathrm{FIN}^{[\infty]} \to \omega$, $\operatorname{id} :\mathrm{FIN} \to \mathrm{FIN}$, $\operatorname{min} : \mathrm{FIN} \to \omega$, $\operatorname{max} : \mathrm{FIN} \to \omega$ and $\operatorname{minmax} : \mathrm{FIN} \to \omega^2$ by $\operatorname{sm}(s) = \varnothing$, $\operatorname{id}(s) = s$, $\operatorname{min}(s) = \text{the least element of $s$}$, $\operatorname{max}(s) = \text{the greatest} \text{ element of $s$}$, and $\operatorname{minmax}(s) = (\operatorname{min}(s), \operatorname{max}(s))$ for all $s \in \mathrm{FIN}$. The object of our study is an ultrafilter $\mathcal{U}$ on the base set $\mathrm{FIN}$ satisfying a certain partition property, called a \textit{stable ordered-union ultrafilter} (see \Cref{sou}). Letting $\bf{1}$ denote a principal ultrafilter and writing $\mathcal{U}_{\operatorname{min}}$, $\mathcal{U}_{\operatorname{max}}$ and $\mathcal{U}_{\operatorname{minmax}}$ for the $\mathrm{RK}$-images of $\mathcal{U}$ under the respective maps (see \Cref{rk}), we have ${\bf 1}\leq_T \mathcal{U}_{\operatorname{min}}, \mathcal{U}_{\operatorname{max}} \leq_T \mathcal{U}_{\operatorname{minmax}} \leq_T \mathcal{U}$. 

We fix an arbitrary stable ordered-union ultrafilter $\mathcal{U}$ on the base set $\mathrm{FIN}$. Denote the Rudin-Keisler equivalence of ultrafilters by $\cong$ (\Cref{rk}), and let us note that Rudin-Keisler reduction implies Tukey reduction. Recall the following theorem of Blass:

\begin{theorem}[\cite{Blass}]\label{blass}

Let $\mathcal{V}$ be a nonprincipal ultrafilter on $\omega$ such that $\mathcal{V} \leq_{RK} \mathcal{U}$. Then $\mathcal{V}$ is Rudin-Keisler equivalent to exactly one of $\mathcal{U}, \mathcal{U}_{\operatorname{min}}, \mathcal{U}_{\operatorname{max}}$ or $\mathcal{U}_{\operatorname{minmax}}$. Moreover, $\mathcal{U}_{\operatorname{min}}$ and $\mathcal{U}_{\operatorname{max}}$ are $\mathrm{RK}$-incomparable. This results in the following picture, where the arrows represent strict Rudin-Keisler reducibility:

\begin{figure}[h]
$$\xymatrix{
&\mathcal{U} \ar[d] &\\
&\mathcal{U}_{\operatorname{minmax}} \ar[ld] \ar[rd] &\\
\mathcal{U}_{\operatorname{min}}\ar[rd] & & \mathcal{U}_{\operatorname{max}}\ar[ld]\\
&{\mathbf 1}}
$$
\caption{Rudin-Keisler classes below $\mathcal{U}$.}\label{Fig1}
\end{figure}
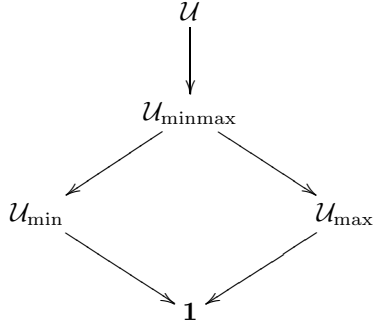

\end{theorem}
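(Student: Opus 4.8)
The plan is to relativize Taylor's canonical partition theorem for finite unions to $\mathcal{U}$. Recall that a stable ordered-union ultrafilter has the Ramsey property: every finite (indeed, Souslin-measurable) partition of $\mathrm{FIN}^{[\infty]}$ admits a block sequence $X$ with $\mathrm{FU}(X)\in\mathcal{U}$ all of whose infinite block subsequences lie in one piece, and the analogous statement holds for the Milliken--Taylor theorem on colorings of pairs of blocks. I would first record two consequences: (a) for every $f\colon\mathrm{FIN}\to\omega$ there is $X$ with $\mathrm{FU}(X)\in\mathcal{U}$ and $f\restriction\mathrm{FU}(X)=\psi\circ\pi$ for some $\pi\in\{\operatorname{sm},\operatorname{min},\operatorname{max},\operatorname{minmax},\operatorname{id}\}$ and some $\psi$ injective on $\pi[\mathrm{FU}(X)]$ (the localized Taylor theorem); and (b) that $\mathcal{U}_{\operatorname{min}}$ and $\mathcal{U}_{\operatorname{max}}$ are \emph{selective} ultrafilters on $\omega$, obtained by pulling colorings of $[\omega]^2$ back along $\operatorname{min}$ and $\operatorname{max}$ to colorings of pairs of blocks; in particular they are $\mathrm{RK}$-minimal, and every regressive function is constant on a set in each of them.

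The ``at most four classes'' half then follows at once from (a): given $\mathcal{V}\leq_{RK}\mathcal{U}$, write $\mathcal{V}=f_*\mathcal{U}$, canonize $f$ on some $\mathrm{FU}(X)\in\mathcal{U}$, and conclude $\mathcal{V}=\psi_*(\pi_*\mathcal{U})\cong\pi_*\mathcal{U}\in\{\mathbf{1},\mathcal{U}_{\operatorname{min}},\mathcal{U}_{\operatorname{max}},\mathcal{U}_{\operatorname{minmax}},\mathcal{U}\}$, where $\mathbf{1}$ is excluded because $\mathcal{V}$ is nonprincipal. Next I would separate the three $\mathrm{RK}$-images from $\mathcal{U}$ itself: an isomorphism $\mathcal{U}\cong\pi_*\mathcal{U}$ with $\pi\neq\operatorname{id}$ would be witnessed by an $h$ with $h\circ\pi$ injective on a set in $\mathcal{U}$, hence on some $\mathrm{FU}(X)$; but $\pi$ identifies distinct members of every $\mathrm{FU}(X)$ --- for instance $x_1$ and $x_0\cup x_1$ have the same $\operatorname{min}$ and the same $\operatorname{max}$, and $x_0\cup x_2$ and $x_0\cup x_1\cup x_2$ the same $\operatorname{minmax}$ --- a contradiction. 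Hence $\mathcal{U}_{\operatorname{min}},\mathcal{U}_{\operatorname{max}},\mathcal{U}_{\operatorname{minmax}}<_{RK}\mathcal{U}$.

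The crux is $\mathcal{U}_{\operatorname{min}}\not\leq_{RK}\mathcal{U}_{\operatorname{max}}$, and here I would exploit that in a block union $x_a\cup\dots\cup x_b$ the value $\operatorname{min}$ is read off the first block $x_a$ while $\operatorname{max}$ is read off the last block $x_b$. Suppose $\theta_*\mathcal{U}_{\operatorname{max}}=\mathcal{U}_{\operatorname{min}}$, so that $\theta\circ\operatorname{max}$ and $\operatorname{min}$ both push $\mathcal{U}$ to $\mathcal{U}_{\operatorname{min}}$; applying (a)'s partition property to the coloring $s\mapsto\operatorname{sgn}(\theta(\operatorname{max} s)-\operatorname{min} s)$ yields $\mathrm{FU}(X)\in\mathcal{U}$ on which this sign is constant. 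If it is $0$, evaluating at $x_0\cup x_1$ and at $x_1$ gives $\operatorname{min} x_0=\theta(\operatorname{max} x_1)=\operatorname{min} x_1$, absurd. If it is $-1$, evaluating at $x_0\cup x_n$ for all $n$ forces $\theta(\operatorname{max} x_n)<\operatorname{min} x_0$ for every $n$, so $(\theta\circ\operatorname{max})[\mathrm{FU}(X)]$ is finite although it lies in the nonprincipal ultrafilter $\mathcal{U}_{\operatorname{min}}$, absurd. So the sign is $+1$: $\operatorname{min} s<\theta(\operatorname{max} s)$ for all $s\in\mathrm{FU}(X)$. Now $(\theta\circ\operatorname{max})[\mathrm{FU}(X)]=\{\theta(\operatorname{max} x_n):n\in\omega\}$ and $\operatorname{min}[\mathrm{FU}(X)]=\{\operatorname{min} x_n:n\in\omega\}$ both belong to $\mathcal{U}_{\operatorname{min}}$, so their intersection $I$ does too; for each $m$ with $\operatorname{min} x_m\in I$ pick $\tau(m)$ with $\theta(\operatorname{max} x_{\tau(m)})=\operatorname{min} x_m$. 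Then $\tau$ is injective (equal values yield the same block $x_{\tau(m)}$, hence the same $\operatorname{min} x_m$), and regressive, since the constraint at $s=x_{\tau(m)}$ reads $\operatorname{min} x_{\tau(m)}<\theta(\operatorname{max} x_{\tau(m)})=\operatorname{min} x_m$, i.e.\ $\tau(m)<m$. Thus $\operatorname{min} x_m\mapsto\operatorname{min} x_{\tau(m)}$ is a regressive function on the $\mathcal{U}_{\operatorname{min}}$-set $I$; by selectivity of $\mathcal{U}_{\operatorname{min}}$ it is constant on a set in $\mathcal{U}_{\operatorname{min}}$, which forces $\tau$ to be constant on an infinite set --- contradicting injectivity. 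I expect this last ($+1$) case, together with pinning down the precise forms of the localized Ramsey theorems, to be where the real work lies.

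To conclude, $\mathcal{U}_{\operatorname{min}}\not\leq_{RK}\mathcal{U}_{\operatorname{max}}$ already yields $\mathcal{U}_{\operatorname{min}}\not\cong\mathcal{U}_{\operatorname{max}}$, and since $\mathcal{U}_{\operatorname{min}}$ is $\mathrm{RK}$-minimal, $\mathcal{U}_{\operatorname{max}}\leq_{RK}\mathcal{U}_{\operatorname{min}}$ would force $\mathcal{U}_{\operatorname{max}}\cong\mathcal{U}_{\operatorname{min}}$, a contradiction; so $\mathcal{U}_{\operatorname{min}}$ and $\mathcal{U}_{\operatorname{max}}$ are $\mathrm{RK}$-incomparable. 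Likewise $\mathcal{U}_{\operatorname{minmax}}\cong\mathcal{U}_{\operatorname{min}}$ would give $\mathcal{U}_{\operatorname{max}}\leq_{RK}\mathcal{U}_{\operatorname{minmax}}\cong\mathcal{U}_{\operatorname{min}}$ and hence $\mathcal{U}_{\operatorname{max}}\cong\mathcal{U}_{\operatorname{min}}$, again impossible, and symmetrically $\mathcal{U}_{\operatorname{minmax}}\not\cong\mathcal{U}_{\operatorname{max}}$. Together with the obvious reductions $\mathbf{1}<_{RK}\mathcal{U}_{\operatorname{min}},\mathcal{U}_{\operatorname{max}}<_{RK}\mathcal{U}_{\operatorname{minmax}}<_{RK}\mathcal{U}$, coming from the factorizations $\operatorname{min}=p_1\circ\operatorname{minmax}$ and $\operatorname{max}=p_2\circ\operatorname{minmax}$ and the nonprincipality of $\mathcal{U}_{\operatorname{min}}$ and $\mathcal{U}_{\operatorname{max}}$, this gives exactly the picture of \Cref{Fig1}.
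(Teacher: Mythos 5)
Since the paper only cites this theorem (it is Blass's result) rather than proving it, I am comparing your argument with the standard route. Your architecture is the expected one and most of it is sound: the canonical partition property (\Cref{prop}, part \ref{tayp}) gives ``at most four'' classes; selectivity of $\mathcal{U}_{\operatorname{min}},\mathcal{U}_{\operatorname{max}}$ and their $\mathrm{RK}$-minimality reduce incomparability and strictness to $\mathcal{U}_{\operatorname{min}}\not\cong\mathcal{U}_{\operatorname{max}}$, and your injectivity argument (implicitly using the standard fact that a map with $f(\mathcal{U})=\mathcal{U}$ is the identity on a member of $\mathcal{U}$) separates $\mathcal{U}$ from its projections. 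The gap is exactly where you predicted the real work to be: the $+1$ case rests on the claim that every regressive function is constant on a set of a selective ultrafilter. That claim is false --- no nonprincipal ultrafilter on $\omega$ satisfies it, since $n\mapsto n-1$, or the map sending each element of a set $A\in\mathcal{U}_{\operatorname{min}}$ to its predecessor in $A$, is regressive yet injective, hence constant on no infinite set. Selectivity only yields ``constant or one-to-one on a set of the ultrafilter,'' and your map $\operatorname{min} x_m\mapsto\operatorname{min} x_{\tau(m)}$ is precisely of predecessor type: nothing you have established prevents it from being one-to-one on a set of $\mathcal{U}_{\operatorname{min}}$, so no contradiction follows and the case $\operatorname{min} s<\theta(\operatorname{max} s)$ remains open.

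The case can be closed, but by further localizing $\theta(\operatorname{max} s)$ against the block structure rather than by pressing down. For instance, color pairs $(s,t)$ with $s<_b t$, $s,t\in[X]$, according to which region $\theta(\operatorname{max} s)$ falls into: $\leq\operatorname{max} s$, strictly between $\operatorname{max} s$ and $\operatorname{min} t$, or $\geq\operatorname{min} t$; take $[X']\in\mathcal{U}$, $X'\leq X$, homogeneous for this coloring of $[X']^{[2]}$ (\Cref{prop}, part \ref{Ramsey}, with $n=2$). The third color is impossible (fix $s$ and let $t$ run, since the minima are unbounded). With the first color, together with the already-fixed constraint $\operatorname{min} s<\theta(\operatorname{max} s)$, each $\theta(\operatorname{max} x'_n)$ lies in $(\operatorname{min} x'_n,\operatorname{max} x'_n]$; with the second, it lies in the gap $(\operatorname{max} x'_n,\operatorname{min} x'_{n+1})$. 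Either way $\{\theta(\operatorname{max} x'_n):n\in\omega\}$ is disjoint from $\{\operatorname{min} x'_n:n\in\omega\}$, yet both sets belong to $\mathcal{U}_{\operatorname{min}}$ (the first because $\operatorname{max}''[X']\in\mathcal{U}_{\operatorname{max}}$ and $\theta(\mathcal{U}_{\operatorname{max}})=\mathcal{U}_{\operatorname{min}}$), a contradiction. Some such relocation argument --- in effect what Blass does --- is the missing ingredient; the regressive-function step cannot be repaired as written.
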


As for the Tukey order, Corollary $69$ and the subsequent remark in \cite{dobtod} showed that $\mathcal{U}_{\operatorname{min}}$ and $\mathcal{U}_{\operatorname{max}}$ are Tukey-incomparable. Theorem $74$ of the same paper included the construction of a particular stable ordered-union $\mathcal{U}$ such that $\mathcal{U} >_T \mathcal{U}_{\operatorname{minmax}}$, assuming $\mathrm{CH}$. This results in a similar picture to \Cref{Fig1} with respect to the Tukey order, but the exact Tukey structure below $\mathcal{U}$ was not classified, even under $\mathrm{CH}$.

\Cref{question} and \Cref{question2} below are Question $77$ and Question $75$ in \cite{dobtod}. \Cref{question3} is motivated by prior work on initial Tukey structures:

\begin{definition2}\label{question}

What ultrafilters are Tukey reducible to $\mathcal{U}$?

\end{definition2}

\begin{definition2} \label{question3}

What are the isomorphism types of ultrafilters Tukey reducible to $\mathcal{U}$?

\end{definition2}

\begin{definition2}\label{question2}

If $\mathcal{U}$ is any stable ordered-union, does it follow that $\mathcal{U} >_T \mathcal{U}_{\operatorname{minmax}}$?

\end{definition2}

Let $\mathcal{U}$ be an arbitrary stable ordered-union ultrafilter. We answer \Cref{question} in \Cref{main2} by proving that every nonprincipal ultrafilter $\mathcal{V} \leq_{T} \mathcal{U}$ is Tukey equivalent to exactly one of $\mathcal{U}, \mathcal{U}_{\operatorname{min}}, \mathcal{U}_{\operatorname{max}}$ or $\mathcal{U}_{\operatorname{minmax}}$, and so the Tukey structure below $\mathcal{U}$ is exactly like the Rudin-Keisler structure illustrated in \Cref{Fig1} (the exactness part of the statement is proved in \Cref{3.5}, which answers \Cref{question2}). \Cref{question3} is answered in \Cref{main1}, which proves that every nonprincipal ultrafilter Tukey below $\mathcal{U}$ is isomorphic to a countable Fubini iterate of ultrafilters from the set $\{\mathcal{U}, \mathcal{U}_{\operatorname{min}}, \mathcal{U}_{\operatorname{max}}, \mathcal{U}_{\operatorname{minmax}}\}$ (see \Cref{fubini}).

Recall the first initial Tukey structure result:

\begin{theorem}[\cite{diltod}]\label{minimality}

Let $\mathcal{U}$ be a Ramsey ultrafilter on $\omega$ and assume that $\mathcal{V}$ is nonprincipal with $\mathcal{V} \leq_T \mathcal{U}$. Then $\mathcal{V}$ is a countable Fubini iterate of $\mathcal{U}$, and in particular $\mathcal{V} \equiv_T \mathcal{U}$.

\end{theorem}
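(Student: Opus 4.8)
The plan is to start from a witnessing Tukey reduction, replace it by a continuous one using selectivity of $\mathcal{U}$, and then apply canonical Ramsey theory on fronts to read off the shape of $\mathcal{V}$. First I would fix a monotone map $f\colon(\mathcal{U},\supseteq)\to(\mathcal{V},\supseteq)$ carrying cofinal subsets to cofinal subsets (the cofinal-map form of $\mathcal{V}\leq_T\mathcal{U}$). Since $\mathcal{U}$ is Ramsey it is in particular a $p$-point, hence \emph{basically generated} in the sense of Dobrinen--Todorcevic, and a basically generated ultrafilter has the property that every monotone cofinal map out of it agrees, on a generating subfamily, with a continuous one. So I may assume $f$ is continuous: each coordinate of $f(B)$ is decided by a finite initial segment of $B$. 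Encoding these decisions produces a front $\mathcal{F}$ on $\mathcal{U}$ (of some countable uniformity rank) together with a map $\varphi$ on $\mathcal{F}$ into the generators of $\mathcal{V}$ such that, for every $B$ in the chosen generating subfamily, $f(B)$ is determined by $\{\varphi(s): s\in\mathcal{F},\ s\sqsubseteq B\}$.

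Second I would canonize $\varphi$. The canonical Ramsey theorem for functions on fronts of the Ellentuck space $[\omega]^{\omega}$ (the Pudl\'ak--R\"odl / Lefmann-type theorem, whose $\mathrm{FIN}$-analogue is proved in this paper) yields a set $A\in\mathcal{U}$ on which $\varphi$ is canonical: there is an ``inner front'' $\mathcal{F}'$, obtained from $\mathcal{F}\restriction A$ by a fixed projection $s\mapsto s\restriction I$ onto a prescribed block of coordinates, such that $\varphi(s)$ is, modulo $\mathcal{U}$, a relabeling of $s\restriction I$. Feeding this canonical form back into the encoding of the first step, the filter generated by $\{f(B):B\in\mathcal{U}\}$ — namely $\mathcal{V}$ — is isomorphic to the front ultrafilter on $\mathcal{F}'$ induced by $\mathcal{U}$; since these front ultrafilters are exactly the countable Fubini iterates of $\mathcal{U}$, the first assertion follows. (The empty projection would make $\mathcal{V}$ principal, which is ruled out, so $\mathcal{F}'$ has positive rank.)

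For $\mathcal{V}\equiv_T\mathcal{U}$ it then suffices to note that a countable Fubini iterate of $\mathcal{U}$ of positive rank has $\mathcal{U}$ as a Rudin--Keisler image — project a front element onto its least coordinate — so $\mathcal{U}\leq_{RK}\mathcal{V}$, hence $\mathcal{U}\leq_T\mathcal{V}$, and with the hypothesis $\mathcal{V}\leq_T\mathcal{U}$ we get $\mathcal{V}\equiv_T\mathcal{U}$. The hard part will be the passage in the first two steps from the purely order-theoretic object $f$ to a front-function $\varphi$ and then back to an \emph{isomorphism} of $\mathcal{V}$ with a front ultrafilter: the basically-generated machinery must be applied with care so that the subfamily on which $f$ is continuous still generates $\mathcal{U}$, and after canonizing one must check that the canonical form pins $\mathcal{V}$ down up to isomorphism rather than merely up to Tukey or Rudin--Keisler reducibility. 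The canonical Ramsey input itself is classical, but weaving it through a Tukey reduction is where the real work lies.
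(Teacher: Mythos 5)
Your proposal is essentially the argument the paper builds on (the technique of \cite{diltod}, outlined in \cite{dense}, which the paper itself adapts in Section 5 for stable ordered-union ultrafilters): take a monotone cofinal map, use the p-point/Ramsey properties to replace it by a continuous, finitarily generated map on a base, read off a front and a function on it, canonize by Pudl\'ak--R\"odl, and identify $\mathcal{V}$ up to isomorphism with a countable Fubini iterate of $\mathcal{U}$, after which $\mathcal{U}\leq_{RK}\mathcal{V}$ yields $\mathcal{V}\equiv_T\mathcal{U}$. Two minor slips that do not derail the argument: the continuity of monotone cofinal maps on a base comes from $\mathcal{U}$ being a p-point (``basically generated'' is a different notion used for other purposes), and for fronts of infinite uniformity rank the canonical form is an inner map $s\mapsto\varphi(s)\subseteq s$ rather than a projection onto a fixed block of coordinates.
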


The proof of this theorem included Pudlák and Rödl's canonization theorem (\cite{pudrod}) as a key step. In \cite{r1} and \cite{r2}, new topological Ramsey spaces dense in Laflamme's forcings were constructed, and new canonization results for equivalence relations (equivalently, functions) on the fronts on these spaces were proved to be employed in the classifications of the initial Tukey structures. Ramsey-classification results, i.e., canonization theorems for fronts on the newly constructed topological Ramsey spaces, were proved and used as a key step of the classifications of the initial Tukey structures in \cite{dobell}, \cite{dobellinfty}, and \cite{DMT}, as well.

In this paper, we will work with Milliken's (\cite{milliken}) topological Ramsey space $\mathrm{FIN}^{[\infty]}$ (see \Cref{block}), and adapt the main result of \cite{KlSp} to prove a simplified canonization result for equivalence relations on fronts on $\mathrm{FIN}^{[\infty]}$. This will yield Lefmann's canonization in \cite{Lef} as a special case. The canonization theorem will then be used in the proof of the main result.

\Cref{2} provides the notation and states some well-known facts that will be used throughout the paper. \Cref{3} states more facts related to $\mathcal{U}$. In that section, we prove that $\mathcal{U}$ is rapid and $\mathcal{U} >_T \mathcal{U}_{\operatorname{minmax}}$, in $\mathrm{ZFC}$. This answers \Cref{question2}, which was asked in \cite{dobtod}, and was restated in \cite{kuzeljevic2024orderstructureppointultrafilters}. In \Cref{4}, we modify the main theorem of \cite{KlSp} to prove a canonization theorem that will better serve our purposes. Finally, \Cref{5} includes the main results, \Cref{main1} and \Cref{main2}.

\subsection*{Acknowledgments}

I am profoundly grateful to my advisor Natasha Dobrinen for introducing me to this problem, for her constant support and guidance, and for her invaluable feedback throughout the writing process. I would also like to thank Burak Kaya for his influence on my education and for his helpful comments.

\section{Notation and Background}\label{2}

Whenever $f$ is a function and $X \subseteq \operatorname{dom}(f)$, we will use $f''X$ and $f[X]$ interchangeably to mean the image of $X$ under $f$. Let us define the following well-known notions:

\begin{definition}\label{cofinal}

Let $\mathcal{U}$ and $\mathcal{V}$ be ultrafilters on countable base sets $I_1$ and $I_2$, respectively.

\begin{enumerate}[label=(\roman*)]
    \item $\mathcal{B} \subseteq \mathcal{U}$ is a \textit{filter base} for (or a \textit{cofinal subset} of) $\mathcal{U}$ if for every $U \in \mathcal{U}$, there is $B \in \mathcal{B}$ such that $B \subseteq U$.
    \item $f : \mathcal{U} \to \mathcal{V}$ is \textit{monotone} if $A \subseteq B \Rightarrow f(A) \subseteq f(B)$ for all $A, B \in \mathcal{U}$.
    \item $f : \mathcal{U} \to \mathcal{V}$ is \textit{cofinal} if for every base $\mathcal{B} \subseteq \mathcal{U}$, $f''\mathcal{B}$ is a base for $\mathcal{V}$.
\end{enumerate}

\end{definition}

Note that if $f : \mathcal{U} \to \mathcal{V}$ is a monotone map, then $f$ is cofinal if and only if $f''\mathcal{U}$ is a filter base for $\mathcal{V}$.

Let $I$ be a countable base set. Let $\mathcal{B}$ be a set of infinite subsets of $I$. We define $\langle \mathcal{B} \rangle = \{X \subseteq I : (\exists Y_1, \ldots, Y_n \in \mathcal{B}) \ Y_1 \cap \ldots \cap Y_n \subseteq X\}$.

Let us also define the appropriate notion of isomorphism between ultrafilters:

\begin{definition}\label{rk}

Let $\mathcal{U}$ and $\mathcal{V}$ be ultrafilters on countable base sets $I_1$ and $I_2$, respectively. Let $f: I_1 \to I_2$ be a map. Then $f(\mathcal{U}) = \{Y \subseteq I_2 : (\exists X \in \mathcal{U}) \ f[X] \subseteq Y\} = \{Y \subseteq I_2 : f^{-1}(Y) \in \mathcal{U}\}$ is an ultrafilter on $I_2$ called the \textit{$\mathrm{RK}$-image of $\mathcal{U}$ under $f$}. We say \textit{$\mathcal{V}$ is Rudin-Keisler} \textit{reducible to $\mathcal{U}$} (or \textit{$\mathcal{V}$ is Rudin-Keisler} \textit{below $\mathcal{U}$}) and write $\mathcal{V} \leq_{RK} \mathcal{U}$, if there is a map $f : I_1 \to I_2$ such that $f(\mathcal{U}) = \mathcal{V}$. We say that $\mathcal{U}$ and $\mathcal{V}$ are \textit{isomorphic} or \textit{Rudin-Keisler equivalent} and write $\mathcal{U} \cong \mathcal{V}$, if there are maps $f_1 : I_1 \to I_2$ and $f_2 : I_2 \to I_1$ such that $f_1(\mathcal{U}_1) = \mathcal{V}$ and $f_2(\mathcal{V}) = \mathcal{U}$. Equivalently, $\mathcal{U} \cong \mathcal{V}$ if there is a bijection (equivalently, an injection) $f : I_1 \to I_2$ such that $f(\mathcal{U}) = \mathcal{V}$. More information on the Rudin-Keisler order on ultrafilters can be found in \cite{blasssurvey} and \cite{halbeisen}.

\end{definition}

It follows from the definitions that $\mathcal{V} \leq_{RK} \mathcal{U} \Rightarrow \mathcal{V} \leq_{T} \mathcal{U}$. The following fact can be found in \cite{dobtod}:

\begin{theorem2} \label{2.3}

For ultrafilters $\mathcal{U}$ and $\mathcal{V}$, if $\mathcal{U} \geq_T \mathcal{V}$, then there exists a monotone cofinal map $f : \mathcal{U} \to \mathcal{V}$.

\end{theorem2}

For an ultrafilter $\mathcal{U}$ on the base set $I$ and $U \in \mathcal{U}$, let $\mathcal{U} \upharpoonright U$ denote $\{V \in \mathcal{U} : V \subseteq U\}$; note that $\mathcal{U} \upharpoonright U$ is an ultrafilter on the base set $U$. Consider $i : U \to I$ given by $i(x) = x$ for all $x \in U$. Then $i$ is injective and for all $X \in \mathcal{U} \upharpoonright U$, we get $i''X = X \in \mathcal{U}$. Hence,

\begin{theorem2}

For any $U \in \mathcal{U}$, $\mathcal{U} \cong \mathcal{U} \upharpoonright U$.

\end{theorem2}

\begin{definition}\label{block}

By a \emph{block sequence}, we mean a sequence $X : \operatorname{dom}(X)$ $ \to$ $\mathrm{FIN}$ such that $\operatorname{max}(X(i)) < \operatorname{min}(X(i+1))$ for all $i \in \operatorname{dom}(X) \in \omega \cup \{\omega\}$. $\sqsubseteq$ will mean initial segment and $\sqsubset$ will mean proper initial segment. We assume that the empty sequence is an initial segment of every sequence. Let $\mathrm{FIN}^{[n]}$ denote the set of all finite block sequences of length $n \in \omega$, and let ${\mathrm{FIN}}^{[\infty]}$ denote the set of all infinite block sequences. For $n \in \omega \cup \{\omega\}$, we denote $\mathrm{FIN}^{[< n]} = \bigcup_{k < n} \mathrm{FIN}^{[k]}$, including the empty sequence. Finally, when we refer to a topology on $\mathrm{FIN}^{[\infty]}$, we refer to the topology arising from the metric $d(X, Y) = \frac{1}{2^i}$ for $X \neq Y \in \mathrm{FIN}^{[\infty]}$, where $i \in \omega$ is the least with $X(i) \neq Y(i)$.

\end{definition}

\begin{definition}

Throughout the paper capital letters $A, B, C, X, Y$, and $Z$ denote elements of $\mathrm{FIN}^{[\infty]}$; small letters $a, b, c, x, y,$ and $z$ denote elements of $\mathrm{FIN}^{[< \infty]}$; $s,t $ and $u$ denote elements of $\mathrm{FIN}$.

\begin{enumerate}[label=(\roman*)]
    \item $s <_b t$ if and only if $\operatorname{max}(s) < \operatorname{min}(t)$. Similarly, $s <_b a$ if and only if $\operatorname{max}(s) < \operatorname{min}(a(0))$ and $a <_b b$ if and only if $\operatorname{max}(a(|a|-1)) < \operatorname{min}(b(0))$, whenever $a,b \neq \varnothing$. $s <_b X$ and $a <_b X$ are defined similarly.
    \item Assume that $s, a <_b b$. $s^{\smallfrown}b$ denotes the block sequence $(s, b(0),$ $ b(1), \ldots,$ $b(|b|-1))$. $a^{\smallfrown}b$ is defined similarly.
    \item $[X] = \{s \in \mathrm{FIN} : s = \bigcup_{i \in I} X(i) \ \text{for some finite $I \subseteq \omega$}\}$ and $[a] = \{s \in \mathrm{FIN} : s = \bigcup_{i \in I} a(i) \ \text{for some finite $I \subseteq \omega$}\}$. Whenever we write expressions of the kind ``$[X] \in \mathcal{U}$", $X$ is implicitly assumed to be an infinite block sequence.
    \item $X \leq Y$ if and only if for all $i \in \omega$, $X(i) \in [Y]$. $a \leq X$ and $a \leq b$ are defined similarly. $[X]^{[\infty]}$ denotes the set of infinite block sequences $X'$ with $X' \leq X$. For $n \in \omega$, $[X]^{[n]}$ denotes the set of block sequences $a \in \mathrm{FIN}^{[n]}$ with $a \leq X$, and $[X]^{[<n]} = \bigcup_{n' < n} [X]^{[n']}$.
    \item Let $n \in \omega$. $X / n = (X(i))_{i \geq i_0}$, where $i_0$ is the least index with $\operatorname{min}(X(i_0)) > n$. We will also write $X / s$ to mean $X / \operatorname{max}(s)$ for $s \in \mathrm{FIN}$, and $X / a$ to mean $X / \operatorname{max}(a(|a|-1))$ for $\varnothing \neq a \in \mathrm{FIN}^{[<\infty]}$.
    \item $X \leq^* Y$ if and only if there is $n \in \omega$ with $X / n \leq Y$.
\end{enumerate}

\end{definition}

For $n \in \omega \setminus \{0\}$ and $X \in \mathrm{FIN}^{[\infty]}$, let $r_n(X) = (X(0), X(1), \ldots, X(n-1))$. $r_0(X)$ is defined to be the empty sequence. Recall the famous theorem of Hindman:

\begin{theorem}[\cite{hind}]\label{hindmanorg}

Let $r \in \omega$. For every coloring $c : \mathrm{FIN} \to r$, there is some $X \in \mathrm{FIN}^{[\infty]}$ such that $c \upharpoonright [X]$ is constant.

\end{theorem}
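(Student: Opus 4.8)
The plan is to prove this via the Galvin--Glazer method, using an idempotent ultrafilter on the partial semigroup $(\mathrm{FIN}, \cup)$, where $s \cup t$ is regarded as ``defined'' exactly when $s \cap t = \varnothing$. First I would isolate the space $\gamma\mathrm{FIN}$ of all nonprincipal ultrafilters $\mathcal{V}$ on $\mathrm{FIN}$ such that $\{s \in \mathrm{FIN} : \operatorname{min}(s) > n\} \in \mathcal{V}$ for every $n \in \omega$; this is nonempty since those sets have the finite intersection property, and it is a closed subset of the Stone space $\beta\mathrm{FIN}$, hence compact. On $\gamma\mathrm{FIN}$ I would define
\[ \mathcal{V} \cdot \mathcal{W} = \bigl\{ A \subseteq \mathrm{FIN} : \{ s \in \mathrm{FIN} : \{ t \in \mathrm{FIN} : s \cap t = \varnothing \text{ and } s \cup t \in A \} \in \mathcal{W} \} \in \mathcal{V} \bigr\}. \]

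Next I would verify the standard package: for $\mathcal{V},\mathcal{W} \in \gamma\mathrm{FIN}$, each fixed $s$ with $\operatorname{min}(s) > n$ has $\{t : \operatorname{min}(t) > \operatorname{max}(s)\} \in \mathcal{W}$ contained in the inner witness set, from which one gets that $\mathcal{V}\cdot\mathcal{W}$ is again a nonprincipal ultrafilter lying in $\gamma\mathrm{FIN}$; that $\cdot$ is associative; and that for fixed $\mathcal{W}$ the map $\mathcal{V} \mapsto \mathcal{V}\cdot\mathcal{W}$ is continuous, since the preimage of a basic clopen set $\{\mathcal{U} : A \in \mathcal{U}\}$ is $\{\mathcal{V} : \{s : \{t : s \cap t = \varnothing,\ s\cup t \in A\} \in \mathcal{W}\} \in \mathcal{V}\}$, again basic clopen. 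Thus $(\gamma\mathrm{FIN}, \cdot)$ is a compact right-topological semigroup, so by Ellis's lemma (every such semigroup contains an idempotent) there is $\mathcal{V}$ with $\mathcal{V} = \mathcal{V}\cdot\mathcal{V}$.

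Then comes the extraction. Given $c : \mathrm{FIN} \to r$, since $\mathcal{V}$ is an ultrafilter and $r$ is finite, some color class $A := c^{-1}(\{i\})$ lies in $\mathcal{V}$. For $A \in \mathcal{V}$ set $A^\star := \{ s \in A : \{ t : s \cap t = \varnothing,\ s \cup t \in A \} \in \mathcal{V} \}$; idempotency yields $A^\star \in \mathcal{V}$ and, for each $s \in A^\star$, $\{ t : s \cap t = \varnothing,\ s \cup t \in A^\star \} \in \mathcal{V}$. Now recursively build a block sequence $X$: given $X(0) <_b \cdots <_b X(n-1)$ such that every $\bigcup_{i \in I} X(i)$ with $\varnothing \neq I \subseteq \{0,\dots,n-1\}$ lies in $A^\star$, intersect over the finitely many such unions $s$ the sets $\{ t : s \cap t = \varnothing,\ s \cup t \in A^\star \}$ together with $A^\star$ and with $\{ t : \operatorname{min}(t) > \operatorname{max}(X(n-1)) \}$ (all in $\mathcal{V}$), and let $X(n)$ be any element of this intersection; the constraint $\operatorname{min}(X(n)) > \operatorname{max}(X(n-1))$ guarantees both the block condition and disjointness from each earlier union. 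Then $X \in \mathrm{FIN}^{[\infty]}$ and $[X] \subseteq A^\star \subseteq A$, so $c \upharpoonright [X] \equiv i$.

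I expect the main obstacle to be the algebraic infrastructure rather than the final recursion: the genuinely delicate point is checking that $\gamma\mathrm{FIN}$ is closed under $\cdot$, which is precisely where the ``large minimum'' restriction is needed (an unrestricted $\beta\mathrm{FIN}$ would not carry a workable operation, since $\cup$ is only partial), after which associativity, right-continuity, and the appeal to Ellis's lemma are routine. An alternative is Baumgartner's purely combinatorial argument, which replaces this soft machinery by a careful induction on a tree of finite approximations tracking ``large'' collections of finite sets; I would nonetheless favour the Galvin--Glazer proof above for its brevity.
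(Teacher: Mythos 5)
Your argument is correct, but note that the paper does not prove this statement at all: it is quoted as Hindman's theorem with a citation to \cite{hind}, whose original 1974 proof is a long purely combinatorial induction. What you give is the standard Galvin--Glazer argument transported to the partial semigroup $(\mathrm{FIN},\cup)$, and all the steps you flag check out: $\gamma\mathrm{FIN}$ is a nonempty closed (hence compact) subset of $\beta\mathrm{FIN}$; your computation that $\{t:\operatorname{min}(t)>\operatorname{max}(s)\}$ sits inside the inner witness set shows $\gamma\mathrm{FIN}$ is closed under the product; associativity holds exactly (for $u\cap v=\varnothing$ the two inner conditions on $t$ coincide, and for $u\cap v\neq\varnothing$ both sides exclude $v$); for fixed $\mathcal{W}$ the map $\mathcal{V}\mapsto\mathcal{V}\cdot\mathcal{W}$ is continuous, so Ellis's lemma applies; the star lemma ($s\in A^\star$ implies $\{t: s\cap t=\varnothing,\ s\cup t\in A^\star\}\in\mathcal{V}$) follows from idempotency exactly as you assert, and your recursion maintains the inductive hypothesis that \emph{all} partial unions lie in $A^\star$, which is what makes the construction go through. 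Compared with the cited original (or Baumgartner's shorter combinatorial proof, which you mention), your route trades elementary but intricate combinatorics for soft compactness-plus-Zorn machinery and is the proof now regarded as standard; either is perfectly acceptable as a justification of the theorem the paper merely quotes.
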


It was first proved by Milliken in \cite{milliken} that $(\mathrm{FIN}^{[\infty]}, \leq, r)$ is a topological Ramsey space. The following is a straightforward corollary of \Cref{hindmanorg}, proved in \cite{Tay}:

\begin{lemma}[\cite{Tay}]\label{hindman}

Let $1\leq l < \omega$, $X \in \mathrm{FIN}^{[\infty]}$, and $r \in \omega$. For any coloring $c : [X]^{[l]} \to r$, there is $Y \leq X$ such that $c \upharpoonright [Y]^{[l]}$ is constant.

\end{lemma}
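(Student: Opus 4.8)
The plan is to prove the statement by induction on $l$, with the case $l = 1$ amounting to a relativized form of Hindman's theorem (\Cref{hindmanorg}) and the inductive step carried out by a fusion argument that reduces a coloring of $[X]^{[l+1]}$ to a coloring of $[X]^{[l]}$.

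For the base case $l = 1$, identify $[X]^{[1]}$ with $[X]$; what is needed is that for every $X \in \mathrm{FIN}^{[\infty]}$ and every $c : [X] \to r$ there is $Y \leq X$ with $c \upharpoonright [Y]$ constant. I would derive this from \Cref{hindmanorg} via the bijection $\phi : \mathrm{FIN} \to [X]$ given by $\phi(s) = \bigcup_{i \in s} X(i)$, which sends $[A]$ onto $\{\bigcup_{i \in I} Y'(i) : I \subseteq \omega \text{ finite nonempty}\}$ whenever $Y'(i) := \bigcup_{j \in A(i)} X(j)$. Applying \Cref{hindmanorg} to $c \circ \phi : \mathrm{FIN} \to r$ yields $A \in \mathrm{FIN}^{[\infty]}$ on which $c \circ \phi$ is constant; then $Y := (\bigcup_{j \in A(i)} X(j))_{i \in \omega}$ is an infinite block sequence with $Y \leq X$ and $[Y] = \phi''[A]$, so $c \upharpoonright [Y]$ is constant.

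For the inductive step, assume the statement for $l$ and let $c : [X]^{[l+1]} \to r$. The goal is first to find $Y \leq X$ such that the color of any $a^{\smallfrown}(s) \in [Y]^{[l+1]}$ depends only on $a$, and then apply the inductive hypothesis to the induced coloring. I would build $Y$ by fusion: construct $Y(0), Y(1), \dots$ together with a $\leq$-decreasing sequence of reservoirs $X = B_0 \geq B_1 \geq \cdots$ satisfying $(Y(0), \dots, Y(n-1)) <_b B_n$, by putting $Y(n) = B_n(0)$ and then, using the base case once for each of the finitely many $a \in [(Y(0), \dots, Y(n))]^{[l]}$ that are not already below $(Y(0), \dots, Y(n-1))$, shrinking to $B_{n+1} \leq B_n / Y(n)$ so that $c(a^{\smallfrown}(s))$ is constant (call its value $\gamma(a)$) as $s$ ranges over $[B_{n+1}]$, for all these $a$ simultaneously. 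Setting $Y = (Y(i))_{i \in \omega}$, every $a^{\smallfrown}(s) \in [Y]^{[l+1]}$ is captured at the stage where $a$ first appears: if $n$ is least with $a \leq (Y(0), \dots, Y(n))$, then $a$ was treated at step $n+1$, while $s \in [Y/a] \subseteq [B_{n+1}]$ since $Y/a = (Y(n+1), Y(n+2), \dots)$ and each $Y(m)$ with $m > n$ lies in $[B_{n+1}]$; hence $c(a^{\smallfrown}(s)) = \gamma(a)$. Applying the inductive hypothesis to $\gamma : [Y]^{[l]} \to r$ gives $Y' \leq Y$ with $\gamma \upharpoonright [Y']^{[l]}$ constant, and then $c \upharpoonright [Y']^{[l+1]}$ is constant.

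The main obstacle is the bookkeeping in the fusion: one must check that $Y$ is genuinely a block sequence with $Y \leq X$, that each length-$l$ block sequence below $Y$ is ``new'' at exactly one stage $n$ (which forces $n \geq l-1$, ensuring $[(Y(0),\dots,Y(n))]^{[l]}$ is nonempty and that the colorings being stabilized at that stage live inside the appropriate reservoir $B_n / Y(n)$), and that the finitely many applications of the base case at a given stage can be composed, each further refining the current reservoir. Once these points are verified the argument is routine.
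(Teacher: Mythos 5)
Your proof is correct: the base case via the bijection $s \mapsto \bigcup_{i \in s} X(i)$ relativizes \Cref{hindmanorg} properly, and the fusion in the inductive step (stabilizing $c(a^{\smallfrown}(s))$ over the reservoir $B_{n+1}$ for each ``new'' $a$, then invoking the inductive hypothesis on the induced coloring $\gamma$) goes through, since every $s \in [Y/a]$ indeed lands in $[B_{n+1}]$ when $n$ is the first stage at which $a$ appears. The paper itself gives no proof but cites Taylor, calling the lemma a straightforward corollary of Hindman's theorem; your induction-plus-fusion argument is exactly the standard derivation being alluded to.
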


Finally, let us briefly review the theory of fronts on $\mathrm{FIN}^{[\infty]}$, which we take from \cite{argy}.

\begin{definition}[\cite{argy}]\label{front1}

Let $\mathcal{B} \subseteq \mathrm{FIN}^{[<\infty]}$.
\begin{enumerate}[label=(\roman*)]
    \item $\mathcal{B}$ is \textit{Ramsey} if for every $\mathcal{X} \subseteq \mathcal{B}$ and for every $X \in \mathrm{FIN}^{[\infty]}$, there is $Y \leq X$ such that $\mathcal{B} | Y \subseteq \mathcal{X}$ or $\mathcal{B} | Y \cap \mathcal{X} = \varnothing$, where $\mathcal{B} | Y = \{a \in \mathcal{B} : a \leq Y\}$. 
    \item $\mathcal{B}$ is \textit{Nash-Williams} if for all $a \neq b \in \mathcal{B}$, $a \not\sqsubset b$.
    \item $\mathcal{B}$ is a \textit{front on $X \in \mathrm{FIN}^{[\infty]}$} if $\mathcal{B}$ is Nash-Williams, $\mathcal{B} \subseteq [X]^{[<\infty]}$, and for all $Y \leq X$, there is a (unique) $a \in \mathcal{B}$ with $a \sqsubset Y$.
\end{enumerate}

\end{definition}

For $\mathcal{B} \subseteq \mathrm{FIN}^{[<\infty]}$ and $s \in \mathrm{FIN}$, we let $\mathcal{B}_{(s)} = \{s <_b a \in [\mathrm{FIN}]^{[<\infty]} : s^{\smallfrown} a \in \mathcal{B}\}$. Similarly, for $a \in \mathrm{FIN}^{[<\infty]}$, we will write $\mathcal{F}_{(a)} = \{a <_b b \in \mathrm{FIN}^{[<\infty]} : a^{\smallfrown} b \in \mathcal{F}\}$.

\begin{definition}[\cite{argy}]

A set $\mathcal{A} \subseteq \mathrm{FIN}$ is called \textit{small} if it does not contain $[X]$ for any $X \in \mathrm{FIN}^{[\infty]}$. Let $X \in \mathrm{FIN}^{[\infty]}$, let $\mathcal{B} \subseteq \mathrm{FIN}^{[<\infty]}$ be a family, and let $\alpha < \omega_1$. $\mathcal{B}$ is \textit{$\alpha$-uniform} on $X$ if one of the following holds: 

\begin{enumerate}[label=(\roman*)]
    \item $\alpha = 0$ and $\mathcal{B} = \{\varnothing\}$,
    \item $\alpha = \beta + 1$ and for all $s \in [X]$, $\mathcal{B}_{(s)}$ is $\beta$-uniform on $[X/s]$.
    \item $\alpha$ is a limit ordinal and for all $s \in [X]$, $\mathcal{B}_{(s)}$ is $\alpha_s$-uniform for some $\alpha_s < \alpha$, and for each $\gamma < \alpha$ $\{s \in [X] : \alpha_s \leq \gamma\}$ is small.
\end{enumerate}
We call $\mathcal{B}$ a \emph{uniform} family on $X$ if it is $\alpha$-uniform for some $\alpha < \omega_1$.

\end{definition}

Note that for every $X \in \mathrm{FIN}^{[\infty]}$, every uniform family on $X$ is a front. Also, it follows by induction on $\alpha$ that if $\mathcal{F}$ is $\alpha$-uniform on $X$ and $Y \leq X$, then $\mathcal{F} |Y$ is also $\alpha$-uniform on $Y$. In this case, we will call $\alpha$ the \emph{uniformity rank} of $\mathcal{F}$. Observe that fronts of finite uniformity rank have the form $[X]^{[n]}$ for $X \in \mathrm{FIN}^{[\infty]}$, and $n \in \omega$.

\begin{theorem2}[\cite{milliken}]\label{nashwilliams}

Every Nash-Williams family is Ramsey.

\end{theorem2}

For $s,t \in \mathrm{FIN}$, we define $s \trianglelefteq t$ iff $\operatorname{max}(s \Delta t) \in t$. Then $\trianglelefteq$ well-orders $\mathrm{FIN}$ with order-type $\omega$. For $a,b \in \mathrm{FIN}^{[<\infty]}$, we let $a <_{lex} b$ iff $a \sqsubset b$ or $a(i) \vartriangleleft b(i)$, where $i$ is minimal with $a(i) \neq b(i)$. If $\mathcal{F}$ is a front on some $X$, then $<_{lex}$ well-orders $\mathcal{F}$, and the following can be proved by induction on the order-type of $\mathcal{F}$ with respect to this ordering:

\begin{lemma}[\cite{argy}]\label{uniform}

Let $\mathcal{F}$ be a front on $X \in \mathrm{FIN}^{[\infty]}$. Then there is $Y \leq X$ such that $\mathcal{F}|Y$ is an $\alpha$-uniform front on $Y$, for some $\alpha \in \omega_1$.

\end{lemma}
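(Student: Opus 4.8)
The proof goes by induction on $\operatorname{otp}(\mathcal{F},<_{lex})$, as the hint after the statement suggests. If $\operatorname{otp}(\mathcal{F},<_{lex})\leq 1$ then, $\mathcal{F}$ being a nonempty front, $\mathcal{F}=\{\varnothing\}$, which is $0$-uniform on $Y=X$; so assume $\operatorname{otp}(\mathcal{F},<_{lex})>1$, whence $\varnothing\notin\mathcal{F}$ by the Nash-Williams property. The key structural observation is that for each $s\in[X]$ the family $\mathcal{F}_{(s)}$ is again a front, now on $X/s$, of strictly smaller order type: if $(s)\in\mathcal{F}$ then $\mathcal{F}_{(s)}=\{\varnothing\}$ by the Nash-Williams property, and otherwise, using that every $s\in[X]$ occurs as the first block of some $Z\leq X$, each $W\leq X/s$ has a unique $a\in\mathcal{F}$ with $a\sqsubset s^{\smallfrown}W$, necessarily of the form $a=s^{\smallfrown}b$ with $\varnothing\neq b\sqsubset W$; in either case $b\mapsto s^{\smallfrown}b$ maps $(\mathcal{F}_{(s)},<_{lex})$ order-isomorphically onto an interval of $(\mathcal{F},<_{lex})$ that is bounded above by an element of $\mathcal{F}$ (use that the first blocks of members of $\mathcal{F}$ exhaust $[X]$, which has no $\trianglelefteq$-maximum), so $\operatorname{otp}(\mathcal{F}_{(s)},<_{lex})<\operatorname{otp}(\mathcal{F},<_{lex})$. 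I will also use repeatedly the identity $(\mathcal{F}|Y)_{(s)}=\mathcal{F}_{(s)}|(Y/s)$ for $Y\leq X$ and $s\in[Y]$, together with the fact (noted before the statement) that the uniformity rank of a uniform family is invariant under $|$-restriction.

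The first main step is a fusion producing $Y=(y_0,y_1,\ldots)\leq X$ such that for every $s\in[Y]$ the front $(\mathcal{F}|Y)_{(s)}=\mathcal{F}_{(s)}|(Y/s)$ is $\gamma_s$-uniform on $Y/s$ for some $\gamma_s<\omega_1$. I would build $Y$ together with a decreasing sequence $X=Z_0\geq Z_1\geq\cdots$ with $y_n\in[Z_n]$ and $Z_{n+1}\leq Z_n/y_n$, interleaving applications of the induction hypothesis: at stage $n$, once $y_n$ is chosen the initial segment $r_{n+1}(Y)$ is determined, and for each of the finitely many $s\leq r_{n+1}(Y)$ with $\operatorname{max}(s)=\operatorname{max}(y_n)$ I apply the induction hypothesis to the front $\mathcal{F}_{(s)}|(Z_n/y_n)$ on $Z_n/y_n$ — which has order type $\leq\operatorname{otp}(\mathcal{F}_{(s)},<_{lex})<\operatorname{otp}(\mathcal{F},<_{lex})$ — refining successively to land $Z_{n+1}$ below all the resulting witnesses. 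Then for any $s\in[Y]$, writing $\operatorname{max}(s)=\operatorname{max}(y_n)$, we get $Y/s=(y_{n+1},y_{n+2},\ldots)\leq Z_{n+1}$, which lies below the witness chosen for $s$, so $\mathcal{F}_{(s)}|(Y/s)$ is uniform of some rank $\gamma_s<\omega_1$.

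The second main step is to cohere the ranks $\gamma_s$. Let $\gamma^{\ast}=\min\{\sup_{s\in[Z]}\gamma_s:Z\leq Y\}$, attained at some $Z^{\ast}\leq Y$; replacing $Y$ by $Z^{\ast}$, we may assume $\sup_{s\in[Z]}\gamma_s=\gamma^{\ast}$ for every $Z\leq Y$ (note $\gamma^{\ast}<\omega_1$ as a countable supremum of countable ordinals, and $\gamma_s$ is unchanged under further $|$-restriction). Now apply \Cref{hindman} with $l=1$ to the $2$-coloring sending $(s)$ to $0$ if $\gamma_s=\gamma^{\ast}$ and to $1$ if $\gamma_s<\gamma^{\ast}$, obtaining a monochromatic $Y'\leq Y$. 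If the color is $0$, then $\gamma_s=\gamma^{\ast}$ for all $s\in[Y']$, so clause (ii) of the definition of uniformity gives that $\mathcal{F}|Y'$ is $(\gamma^{\ast}+1)$-uniform on $Y'$. If the color is $1$, then $\gamma_s<\gamma^{\ast}$ for all $s\in[Y']$ while $\sup_{s\in[Y']}\gamma_s=\gamma^{\ast}$, forcing $\gamma^{\ast}$ to be a limit ordinal; moreover for each $\delta<\gamma^{\ast}$ the set $\{s\in[Y']:\gamma_s\leq\delta\}$ is small, since if it contained $[W]$ for some $W\leq Y'$ we would get $\sup_{s\in[W]}\gamma_s\leq\delta<\gamma^{\ast}$, contradicting $\sup_{s\in[W]}\gamma_s=\gamma^{\ast}$. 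Hence clause (iii) applies and $\mathcal{F}|Y'$ is $\gamma^{\ast}$-uniform on $Y'$. Either way $Y'$ witnesses the claim, completing the induction.

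I expect the main obstacle to be the bookkeeping in the fusion: one must check that the finitely many obligations arising at stage $n$ can be satisfied by successive refinements of $Z_n/y_n$ (so that the interleaving of induction-hypothesis calls is legitimate, each applied to a front of strictly smaller order type), and that the resulting single $Y$ then meets all of them simultaneously for every $s\in[Y]$. The coherence step is comparatively soft once one notices the minimization trick, which mirrors exactly the successor/limit dichotomy built into the definition of $\alpha$-uniform; the only slightly delicate point there is the verification of the smallness condition in clause (iii), which follows immediately from the stability $\sup_{s\in[W]}\gamma_s=\gamma^{\ast}$ secured by minimizing.
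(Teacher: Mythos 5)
Your proof is correct, and it is exactly the approach the paper indicates (and attributes to \cite{argy}): transfinite induction on $\operatorname{otp}(\mathcal{F},<_{lex})$, using that each $\mathcal{F}_{(s)}$ is a front of strictly smaller order type, a fusion to uniformize all $\mathcal{F}_{(s)}|(Y/s)$, and then the minimize-the-supremum plus \Cref{hindman} dichotomy matching the successor/limit clauses of uniformity. The bookkeeping worries you raise are indeed only bookkeeping; in particular your observation that the chosen rank $\gamma_s$ is preserved under further restriction is what makes the coherence step go through without needing anything more.
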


The defining property of the ultrafilters we are interested in is that they give witnesses to \Cref{hindman}. These ultrafilters are called \emph{stable ordered-union} ultrafilters, which were studied in detail by Blass and Hindman in \cite{Blass} and \cite{BlHi}.

\begin{definition} \label{sou}

Let $\mathcal{U}$ be a an ultrafilter on the base set $\mathrm{FIN}$. $\mathcal{U}$ is called \textit{ordered-union}, if there is a filter basis $\mathcal{B}$ for $\mathcal{U}$ such that every member of $\mathcal{B}$ is of the form $[X]$ for some $X \in \mathrm{FIN}^{[\infty]}$.
$\mathcal{U}$ is called \textit{stable ordered-union}, if it is ordered-union and for every $(X_i)_{i \in \omega} \subseteq \mathrm{FIN}^{[\infty]}$ with $[X_i] \in \mathcal{U}$ and $X_{i+1} \leq^* X_i$ for all $i \in \omega$, there is $[X] \in \mathcal{U}$ such that $X \leq^* X_i$ for all $i \in \omega$.

\end{definition}

In the following, \ref{NWp} is motivated by \Cref{nashwilliams} and \ref{tayp} is motivated by \Cref{taycanon}. The equivalence of \ref{prop1}, \ref{Ramsey}, \ref{infty}, and \ref{tayp} was proved in \cite{Blass}, and their equivalence to \ref{sel}, and \ref{NWp} is in \cite{mij} and \cite{yuanyuanzheng}.

\begin{theorem}\label{prop}

Let $\mathcal{U}$ be an ordered-union ultrafilter on $\mathrm{FIN}$. The following are equivalent:

\begin{enumerate}[label=\textnormal{(\roman*)}]
    \item $\mathcal{U}$ is a stable ordered-union ultrafilter. \label{prop1}
    \item For all $1 \leq n < \omega$ and for all $f : \mathrm{FIN}^{[n]} \to 2$, there is $[X] \in \mathcal{U}$ such that $f$ is constant on $[X]^{[n]}$ (Ramsey property). \label{Ramsey}
    \item If $\mathcal{A} \subseteq \mathrm{FIN}^{[\infty]}$ is metrically analytic, then there is $[X] \in \mathcal{U}$ such that $[X]^{[\infty]} \subseteq \mathcal{A}$ or $[X]^{[\infty]}  \cap \mathcal{A} = \varnothing$ ($\infty$-dimensional Ramsey property) \label{infty}. 
    \item If $[X_a] \in \mathcal{U}$ for all $a \in \mathrm{FIN}^{[<\infty]}$, then there is $[X] \in \mathcal{U}$ such that $[X / a] \subseteq [X_a]$ for all $a \leq X$ (selectivity). \label{sel}
    \item For every Nash-Williams family $\mathcal{F} \subseteq \mathrm{FIN}$ and every $\mathcal{A} \subseteq \mathcal{F}$, there is $[X] \in \mathcal{U}$ such that $\mathcal{F} | X \subseteq \mathcal{A}$ or $\mathcal{F} | X \cap \mathcal{A} = \varnothing$ (Nash-Williams property). \label{NWp}
    \item For every function $f :\mathrm{FIN} \to \omega$, there is $[X] \in \mathcal{U}$ and $c \in \{\operatorname{sm}, \operatorname{min}, \operatorname{max},$ $\operatorname{minmax}, \operatorname{id}\}$ such that; $f(s) = f(t)$ if and only if $c(s)=c(t)$, for all $s,t \in [X]$ (canonical partition property). \label{tayp}
\end{enumerate}

\end{theorem}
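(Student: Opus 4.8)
The plan is to close the loop of implications, leaning on \cite{Blass} for the equivalence of \ref{prop1}, \ref{Ramsey}, \ref{infty}, \ref{tayp} and on \cite{mij}, \cite{yuanyuanzheng} for the equivalence with \ref{sel} and \ref{NWp}. Everything except one implication is combinatorial bookkeeping built on Hindman's theorem (\Cref{hindman}) and the fact that $(\mathrm{FIN}^{[\infty]},\leq,r)$ is a topological Ramsey space (\Cref{nashwilliams}); the single substantial step is $\ref{prop1}\Rightarrow\ref{infty}$. I would organize the argument as: the easy implications $\ref{infty}\Rightarrow\ref{Ramsey}$, $\ref{NWp}\Rightarrow\ref{Ramsey}$, and $\ref{sel}\Rightarrow\ref{prop1}$; then $\ref{Ramsey}\Leftrightarrow\ref{tayp}$, $\ref{Ramsey}\Rightarrow\ref{prop1}$, $\ref{prop1}\Rightarrow\ref{sel}$, and $\ref{prop1}\Rightarrow\ref{NWp}$ by fusion and coloring arguments; and finally $\ref{prop1}\Rightarrow\ref{infty}$.

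\textbf{The routine links.} $\ref{infty}\Rightarrow\ref{Ramsey}$ is immediate: a coloring $f:\mathrm{FIN}^{[n]}\to 2$ induces the clopen --- hence metrically analytic --- set $\mathcal{A}=\{Y:f(r_n(Y))=0\}$, and a set $[X]\in\mathcal{U}$ as in \ref{infty} is homogeneous for $f$ on $[X]^{[n]}$. $\ref{NWp}\Rightarrow\ref{Ramsey}$ is the same observation with $\mathcal{F}=\mathrm{FIN}^{[n]}$, which is Nash--Williams. $\ref{sel}\Rightarrow\ref{prop1}$ is a one-line diagonalization: given $[X_i]\in\mathcal{U}$ with $X_{i+1}\leq^* X_i$, apply \ref{sel} to the assignment $a\mapsto X_{|a|}$ to obtain $[X]\in\mathcal{U}$ with $[X/a]\subseteq[X_{|a|}]$ for all $a\leq X$; plugging in $a=r_i(X)$ yields $(X(j))_{j\geq i}\leq X_i$, i.e. $X\leq^* X_i$. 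For the reverse and harder directions among the finite-type properties I would run the standard fusion over the tree $\mathrm{FIN}^{[<\infty]}$, thinning members of $\mathcal{U}$ so as to decide each finite color class via \Cref{hindman}, reducing Nash--Williams families to uniform ones via \Cref{uniform}, and --- crucially --- invoking stability \ref{prop1} to produce a pseudo-intersection in $\mathcal{U}$ before diagonalizing through its blocks. The implications $\ref{Ramsey}\Rightarrow\ref{prop1}$ and $\ref{Ramsey}\Leftrightarrow\ref{tayp}$ are obtained, respectively, by coding a non-pseudo-intersectible decreasing sequence into a $2$-coloring, and by relativizing Taylor's canonical version of Hindman's theorem, noting that after thinning the function $s\mapsto f(s)$ on $[X]$ factors through one of $\operatorname{sm},\operatorname{min},\operatorname{max},\operatorname{minmax},\operatorname{id}$.

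\textbf{The hard step.} The crux is $\ref{prop1}\Rightarrow\ref{infty}$: deriving the infinite-dimensional Ramsey property for metrically analytic $\mathcal{A}\subseteq\mathrm{FIN}^{[\infty]}$ from stability alone. I would follow the Galvin--Prikry--Ellentuck template for the Milliken space $(\mathrm{FIN}^{[\infty]},\leq,r)$: first handle open $\mathcal{A}$ using the Nash--Williams property \ref{NWp}, then climb the Borel hierarchy, and finally pass to analytic sets by the usual unfolding/Souslin-scheme argument, at each stage performing the combinatorial forcing entirely inside $\mathcal{U}$. The only role of stability is that every step produces a $\leq$-decreasing-mod-finite sequence $([X_k])_k$ of members of $\mathcal{U}$, each resolving one further approximation, after which \ref{prop1} supplies a common lower bound again in $\mathcal{U}$; this is exactly what keeps the construction inside $\mathcal{U}$. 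I expect the main obstacle to be the mod-finite bookkeeping in this fusion --- arranging that at limit stages a single $[X]\in\mathcal{U}$ meets all accumulated requirements and that the ``contained in $\mathcal{A}$ versus disjoint from $\mathcal{A}$'' dichotomy survives --- rather than any genuinely new idea, as all of this is carried out in \cite{Blass} and \cite{mij}.
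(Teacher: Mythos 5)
The paper never proves \Cref{prop}: it is quoted as known, with the equivalence of \ref{prop1}, \ref{Ramsey}, \ref{infty}, and \ref{tayp} attributed to \cite{Blass} and their equivalence with \ref{sel} and \ref{NWp} to \cite{mij} and \cite{yuanyuanzheng}, which is exactly how your proposal handles the substantive steps (the canonical partition property and the analytic case of \ref{infty}). Your sketches of the routine implications (\ref{infty}$\Rightarrow$\ref{Ramsey}, \ref{NWp}$\Rightarrow$\ref{Ramsey}, \ref{sel}$\Rightarrow$\ref{prop1}, and the fusion/coloring arguments from stability) are correct and follow the standard treatment in those references, so your approach is essentially the same as the paper's; only note that a fully self-contained write-up would still need the direction from \ref{tayp} back to the other properties, which your outline asserts but does not sketch.
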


Lastly, we define the \emph{Fubini limit} of ultrafilters:

\begin{definition}\label{fubini}

Let $\mathcal{V}$ be an ultrafilter on $I$ and let $\mathcal{W}_i$ be ultrafilters on $A$ for all $i \in I$. We define the following ultrafilter on $I \times A$: $$\lim_{i \to \mathcal{V}} \mathcal{W}_i = \{X \subseteq I \times A : \{i \in I : \{a \in A : (i, a) \in X\} \in \mathcal{W}_i\} \in \mathcal{V}\}.$$ If all of the $\mathcal{W}_i$'s are the same ultrafilter $\mathcal{W}$, we denote $\mathcal{V} \cdot \mathcal{W} = \lim_{i \to \mathcal{V}} \mathcal{W}_i$. Finally, we will write $\mathcal{V}^2$ instead of $\mathcal{V} \cdot \mathcal{V}$.

\end{definition}

To finish this section, let us remark that one can construct a stable ordered-union ultrafilter $\mathcal{U}$ under $\mathrm{CH}$ (or just $\mathfrak{p} = \mathfrak{c}$) inductively, using \Cref{hindmanorg}. Another approach to construct $\mathcal{U}$ is the following: Take a generic filter $G$ for the poset $(\mathrm{FIN}^{[\infty]}, \leq^*)$ and let $\mathcal{U} = \langle \{[X] : X \in G\} \rangle$. However, the existence of a stable ordered-union is unprovable in $\mathrm{ZFC}$, since it implies the existence of selective ultrafilters (see \Cref{3.2}), and there are models of $\mathrm{ZFC}$ without selective ultrafilters (\cite{kunen}, \cite{nshelah}, \cite{she}). Finally, we mention that Raghavan and Steprāns recently solved a longstanding open problem, asked in \cite{Blass}, by constructing a model of $\mathrm{ZFC}$ with at least two non-isomorphic selective ultrafilters, but without any stable ordered-union ultrafilters \cite{raghavan2023stableorderedunionversusselective}.

\section{Properties of \texorpdfstring{$\mathcal{U}$}{Lg}}\label{3}

In this section, we recall some facts about $\mathcal{U}$ and answer a question asked in \cite{dobtod} by proving that $\mathcal{U} >_T \mathcal{U}_{\operatorname{minmax}}$ (\Cref{3.5}). Let us recall the following types of ultrafilters on $\omega$:

\begin{definition}

Let $\mathcal{U}$ be an ultrafilter on $\omega$.

\begin{enumerate}[label=(\roman*)]
    \item $\mathcal{U}$ is a \textit{p-point} if for every $\{X_n : n \in \omega\} \subseteq \mathcal{U}$, there is some $X \in \mathcal{U}$ such that $|X \setminus X_n| < \aleph_0$ for all $n \in \omega$.
    \item $\mathcal{U}$ is a \textit{q-point} if for each partition of $\omega$ into finite pieces $\{I_n : n \in \omega\}$, there is $X \in \mathcal{U}$ such that $|X \cap I_n| \leq 1$ for all $n \in \omega$.
    \item $\mathcal{U}$ is \textit{Ramsey} if for every $k, r \in \omega$ and every $c : [\omega]^k \to r$, there is $X \in \mathcal{U}$ such that $c \upharpoonright [X]^k$ is constant.
    \item $\mathcal{U}$ is \textit{rapid} if for all $f : \omega \to \omega$, there exists $X \in \mathcal{U}$ such that $|X \cap f(n)| \leq n$ for every $n \in \omega$.
\end{enumerate}

\end{definition}

In Theorem $3$ of \cite{Mil}, Miller proved the following alternate characterization of rapid ultrafilters:

\begin{definition}

Let $\mathcal{V}$ be a nonprincipal ultrafilter on the countable base set $I$. $\mathcal{V}$ is called \textit{rapid} if one of the following equivalent conditions holds:

\begin{enumerate}[label=(\roman*)]
    \item Given finite subsets $P_n \subseteq I$, there is $X \in \mathcal{V}$ with $|X \cap P_n| \leq n$ for each $n \in \omega$.
    \item There is $h : \omega \to \omega$ such that given finite subsets $P_n \subseteq I$, there is $X \in \mathcal{V}$ with $|X \cap P_n| \leq h(n)$ for each $n \in \omega$.
\end{enumerate}

\end{definition}

Now let $\mathcal{U}$ be a stable ordered-union ultrafilter. Denote the $\mathrm{RK}$-images by $\mathcal{U}_{\operatorname{min}} = \operatorname{min}(\mathcal{U})$, $\mathcal{U}_{\operatorname{max}} = \operatorname{max}(\mathcal{U})$, and $\mathcal{U}_{\operatorname{minmax}} = \operatorname{minmax}(\mathcal{U})$, which are ultrafilters on $\omega$, $\omega$ and $\omega^2$, respectively. By definition, $\mathcal{U}_{\operatorname{min}}, \mathcal{U}_{\operatorname{max}} \leq_{RK} \mathcal{U}_{\operatorname{minmax}} \leq_{RK} \mathcal{U}$; hence also $\mathcal{U}_{\operatorname{min}}, \mathcal{U}_{\operatorname{max}} \leq_{T} \mathcal{U}_{\operatorname{minmax}} \leq_{T} \mathcal{U}$. In the following theorem \ref{3.2i}-\ref{3.2iii} are well-known. We prove \ref{3.2iv} because it seems to be missing from the previous literature.

\begin{theorem}\label{3.2}

\begin{enumerate}[label=\textnormal{(\roman*)}]
    \item {\rm(}\cite{BlHi}{\rm)} $\mathcal{U}_{\operatorname{min}}$ and $\mathcal{U}_{\operatorname{max}}$ are selective. \label{3.2i}
    \item {\rm(}Lemma $3.7$, \cite{Blass}{\rm)} $\mathcal{U}_{\operatorname{min}} \cdot \mathcal{U}_{\operatorname{max}} \cong \mathcal{U}_{\operatorname{minmax}}$. \label{3.2iii}
    \item {\rm(}Fact $65$, \cite{dobtod}{\rm)} $\mathcal{U}_{\operatorname{minmax}}$ is rapid, but not a p-point or a q-point. \label{3.2ii}
    \item {\rm(}Fact $65$, \cite{dobtod}{\rm)} $\mathcal{U}$ is not a p-point or a q-point.
    \item $\mathcal{U}$ is rapid. \label{3.2iv}
\end{enumerate}

\end{theorem}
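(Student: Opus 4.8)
The plan is to verify rapidity through the characterization for ultrafilters on countable base sets recalled above (due to Miller): given an arbitrary sequence of finite sets $P_n \subseteq \mathrm{FIN}$, we must find $[X] \in \mathcal{U}$ with $|[X] \cap P_n| \le n$ for all $n$. First choose integers $m_n$ with $\max(s) < m_n$ for every $s \in P_n$, arranging (harmlessly) that $(m_n)_{n \in \omega}$ is nondecreasing. The one combinatorial observation is that for any block sequence $B$, an element $s \in [B]$ with $\max(s) < m_n$ is necessarily a union of blocks $B(i)$ with $\max(B(i)) < m_n$; so if $j_n(B) := |\{i : \max(B(i)) < m_n\}|$ then $|[B] \cap P_n| \le |\{s \in [B] : \max(s) < m_n\}| = 2^{j_n(B)} - 1$. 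Thus it is enough to produce $[X] \in \mathcal{U}$ with $j_n(X) \le \log_2(n+1)$ for all $n$, since then $|[X]\cap P_n| \le 2^{\log_2(n+1)} - 1 = n$.

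Such an $X$ is obtained by forcing its blocks to grow rapidly; it suffices that $\min(X(\ell)) > m_{2^{\ell+1}}$ for every $\ell \in \omega$. Indeed, $\max(X(i)) < m_n$ then forces $m_{2^{i+1}} < \min(X(i)) \le \max(X(i)) < m_n$, hence $2^{i+1} < n$ by monotonicity of $(m_k)$, and counting the admissible indices $i \ge 0$ yields $j_n(X) \le \log_2(n+1)$ for every $n$ — the borderline cases and $n \le 2$ (where $j_n(X) = 0$, which in particular gives $[X] \cap P_0 = \varnothing$) being checked directly. To build $X$ we apply the selectivity property \Cref{prop}\ref{sel}: for each $a \in \mathrm{FIN}^{[<\infty]}$ of length $\ell$, the set $\{s \in \mathrm{FIN} : \min(s) > m_{2^{\ell+1}}\}$ belongs to $\mathcal{U}$, because its complement contains no $[Y]$ (block minima are strictly increasing) and so is not in $\mathcal{U}$ as $\mathcal{U}$ is ordered-union; hence we may fix $X_a \in \mathrm{FIN}^{[\infty]}$ with $[X_a] \in \mathcal{U}$ and $\min(s) > m_{2^{\ell+1}}$ for all $s \in [X_a]$. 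Selectivity then delivers $[X] \in \mathcal{U}$ with $[X/a] \subseteq [X_a]$ for all $a \le X$; applying this to $a = r_\ell(X)$ — with the convention $X/\varnothing = X$ handling $\ell = 0$, or alternatively passing to $X/m_2$, which stays in $\mathcal{U}$ since the elements of $[X]$ meeting only finitely many initial blocks of $X$ form a small set — gives $X(\ell) \in [X/a] \subseteq [X_a]$, so $\min(X(\ell)) > m_{2^{\ell+1}}$ as required.

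I do not expect a genuine obstacle: the content is exactly the exponential estimate $|[B]\cap P_n| \le 2^{j_n(B)} - 1$ together with the choice of the growth rate $\ell \mapsto 2^{\ell+1}$, which is fast enough to make $2^{j_n(X)} - 1 \le n$ while still being realizable inside $\mathcal{U}$ by selectivity. The only points needing any care are the small-$n$ bookkeeping (in particular the requirement $[X] \cap P_0 = \varnothing$ when $n = 0$) and the harmless convention for $X/\varnothing$.
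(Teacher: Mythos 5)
Your argument is correct, but it takes a genuinely different route from the paper's. The paper proves rapidity in two lines by invoking Miller's characterization in its second form (with an auxiliary bound $h$): it sets $h(n)=|[B_n]|=2^n-1$, observes that $\mathcal{X}=\{X : (\forall n)\ |[X]\cap P_n|\leq h(n)\}$ is closed and dense (every $X$ has some $X'\leq X$ in $\mathcal{X}$), and then applies the $\infty$-dimensional Ramsey property (\Cref{prop}\ref{infty}) to get $[Z]\in\mathcal{U}$ with $Z\in\mathcal{X}$. You instead aim at the first form of Miller's characterization, with the bound exactly $n$, and build the witness by hand: the estimate $|[B]\cap P_n|\leq 2^{j_n(B)}-1$ reduces the problem to controlling how many blocks lie below $m_n$, and you realize the growth condition $\operatorname{min}(X(\ell))>m_{2^{\ell+1}}$ inside $\mathcal{U}$ via selectivity (\Cref{prop}\ref{sel}), using that sets of the form $\{s:\operatorname{min}(s)>m\}$ lie in $\mathcal{U}$ because $\mathcal{U}$ is ordered-union; your handling of the $\ell=0$ case (convention for $X/\varnothing$, or passing to $X/m_2$, which remains in $\mathcal{U}$) is the only delicate point and you treat it correctly. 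What each approach buys: the paper's is shorter and needs no numerics, at the cost of invoking the heavier $\infty$-dimensional (analytic) Ramsey property and settling for the $h$-version of rapidity; yours is more elementary in the tools used (selectivity plus a counting estimate) and produces the sharper bound $|[X]\cap P_n|\leq n$ directly, at the cost of some bookkeeping with the growth rate and the small-$n$ cases.
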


\begin{proof}

We prove \ref{3.2iv}: Let $P_n \subseteq \mathrm{FIN}$ be finite for all $n\in \omega$. For each $n \in \omega$, let $B_n \in \mathrm{FIN}^{[n]}$ denote the block sequence $(\{0\}, \ldots, \{n-1\}).$ Define $h : \omega \to \omega$ by $h(n) = |[B_n]|$ for all $n \in \omega$. Let $$\mathcal{X} = \{X \in \mathrm{FIN}^{[\infty]} : (\forall n \in \omega) \ |[X] \cap P_n| \leq h(n)\}.$$ Note that $\mathcal{X}$ is closed. Observe that for every $X \in \mathrm{FIN}^{[\infty]}$, there is $X' \leq X$ with $X' \in \mathcal{X}$. Thus, by \Cref{prop} part \ref{infty}, there is $[Z] \in \mathcal{U}$ with $Z \in \mathcal{X}$, and we get the result. \qedhere

\end{proof}

To set the context for the Tukey order below $\mathcal{U}$, let us quote the following theorems:

\begin{theorem} \label{tukey}

\begin{enumerate}[label=\textnormal{(\roman*)}]
    \item $\mathcal{U}_{\operatorname{min}}$ and $\mathcal{U}_{\operatorname{max}}$ are nonisomorphic \cite{Blass} and Tukey incomparable \cite{dobtod}.
    \item {\rm(}Lemma $34$, \cite{dobtod}{\rm)} $\mathcal{U}_{\operatorname{min}} \cdot \mathcal{U}_{\operatorname{max}} \equiv_T \mathcal{U}_{\operatorname{max}} \cdot \mathcal{U}_{\operatorname{min}}$.
    \item {\rm(}Theorem $35$, \cite{dobtod}{\rm)} $\mathcal{U}_{\operatorname{min}}^2 \equiv_T \mathcal{U}_{\operatorname{min}}$ and $\mathcal{U}_{\operatorname{max}}^2 \equiv_T \mathcal{U}_{\operatorname{max}}$.
    \item {\rm(}\cite{benhamou2023tukeytypesfubiniproducts}{\rm)} $\mathcal{U}_{\operatorname{minmax}}^2 \equiv_T \mathcal{U}_{\operatorname{minmax}}$.
    \item {\rm(}\cite{benhamou2023tukeytypesfubiniproducts}{\rm)} $\mathcal{U}^2 \equiv_T \mathcal{U}$.
\end{enumerate}

\end{theorem}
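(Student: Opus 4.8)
Since \Cref{tukey} collects results from \cite{Blass}, \cite{dobtod}, and \cite{benhamou2023tukeytypesfubiniproducts}, the most economical ``proof'' is a list of citations; nonetheless, here is how I would establish each clause and where the real work sits. For clause (i), the non-isomorphism $\mathcal{U}_{\operatorname{min}}\not\cong\mathcal{U}_{\operatorname{max}}$ is the combinatorial heart --- a fact about the interleaving $\operatorname{min}X(0)<\operatorname{max}X(0)<\operatorname{min}X(1)<\cdots$ --- which I would take from Blass. I would then observe that Rudin-Keisler incomparability reduces to it: if $g$ witnessed $\mathcal{U}_{\operatorname{min}}\le_{RK}\mathcal{U}_{\operatorname{max}}$, then $g\circ\operatorname{max}$ is a function on $\mathrm{FIN}$ that factors through $\operatorname{max}$, so its canonical form (\Cref{prop} part \ref{tayp}) cannot be $\operatorname{min}$, $\operatorname{minmax}$ or $\operatorname{id}$ (take $s=X(0)\cup X(1)$, $t=X(1)$ to see it fails to separate points the way $\operatorname{min}$ does), and it is not $\operatorname{sm}$ either since $\mathcal{U}_{\operatorname{min}}$ is nonprincipal; hence it is $\operatorname{max}$, i.e.\ $g$ is injective on a $\mathcal{U}_{\operatorname{max}}$-large set, giving $\mathcal{U}_{\operatorname{min}}\cong g(\mathcal{U}_{\operatorname{max}})\cong\mathcal{U}_{\operatorname{max}}$, a contradiction. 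For the Tukey incomparability \cite{dobtod} I would use that $\mathcal{U}_{\operatorname{min}},\mathcal{U}_{\operatorname{max}}$ are selective, hence p-points, hence basically generated (in the sense of \cite{dobtod}): a monotone cofinal map between basically generated ultrafilters is, on a cofinal set, controlled by finitary data, from which selectivity extracts an injection on a large set, so $\mathcal{U}_{\operatorname{min}}\equiv_T\mathcal{U}_{\operatorname{max}}$ would again force $\mathcal{U}_{\operatorname{min}}\cong\mathcal{U}_{\operatorname{max}}$.

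For clauses (ii) and (iii) I would exploit selectivity directly. For (iii): if $\mathcal{V}$ is a selective ultrafilter on $\omega$ and $[\mathcal{V}]^2$ denotes the ultrafilter on $\{(i,j):i<j\}$ generated by the sets $[A]^2$ for $A\in\mathcal{V}$, then $[\mathcal{V}]^2\cong\mathcal{V}^2$: the first-coordinate projection is $\mathcal{V}$, each fiber is an isomorphic copy of $\mathcal{V}$, and the diagonalization form of selectivity (given $Z_i\in\mathcal{V}$, find $A\in\mathcal{V}$ with $A\setminus(i+1)\subseteq Z_i$ for $i\in A$) shows the $[A]^2$ are cofinal. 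Then $A\mapsto[A]^2$ is a monotone cofinal map $\mathcal{V}\to\mathcal{V}^2$, and with the automatic $\mathcal{V}\le_T\mathcal{V}^2$ this yields $\mathcal{V}^2\equiv_T\mathcal{V}$; apply this with $\mathcal{V}=\mathcal{U}_{\operatorname{min}}$ and $\mathcal{V}=\mathcal{U}_{\operatorname{max}}$. For (ii): by \Cref{3.2} part \ref{3.2iii} the left side is $\mathcal{U}_{\operatorname{minmax}}$ up to $\cong$, so it suffices to show $\mathcal{U}_{\operatorname{max}}\cdot\mathcal{U}_{\operatorname{min}}\equiv_T\mathcal{U}_{\operatorname{minmax}}$; both sides are ``staircase'' ultrafilters built from interleaved sequences $(\operatorname{min}X(i),\operatorname{max}X(j))$, and I would write down a monotone cofinal map between them by hand using the selectivity of the two factors, following Lemma 34 of \cite{dobtod}.

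For clauses (iv) and (v) the shortcut above fails, since $\mathcal{U}_{\operatorname{minmax}}$ and $\mathcal{U}$ are not p-points (\Cref{3.2}). Following \cite{benhamou2023tukeytypesfubiniproducts}, I would prove $\mathcal{U}^2\le_T\mathcal{U}$ directly: a cofinal family of $\mathcal{U}^2$ is described by data $([X],([Y_s])_{s\in[X]})$ with $[X],[Y_s]\in\mathcal{U}$, and the selectivity of $\mathcal{U}$ as a topological Ramsey space (\Cref{prop} part \ref{sel}) collapses the family $([Y_s])_{s}$ into a single $[Z]\in\mathcal{U}$; encoding a pair $s<_b t$ by the position of the cut inside the block $s\cup t$, one checks that the induced assignment $[X]\mapsto$ (member of $\mathcal{U}^2$) is monotone with cofinal image, while $\mathcal{U}\le_T\mathcal{U}^2$ is automatic. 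For $\mathcal{U}_{\operatorname{minmax}}$ I would run the same diagonalization one level down --- cofinal sets of $\mathcal{U}_{\operatorname{minmax}}$ are $\operatorname{minmax}$-images of the $[X]$'s --- or else deduce it from $\mathcal{U}_{\operatorname{minmax}}\cong\mathcal{U}_{\operatorname{min}}\cdot\mathcal{U}_{\operatorname{max}}$ together with (ii) and (iii).

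I expect the genuine obstacle to be the pair (iv)--(v): because $\mathcal{U}$ and $\mathcal{U}_{\operatorname{minmax}}$ are not p-points one cannot route through the ``$[\mathcal{V}]^2\cong\mathcal{V}^2$'' argument, so the cofinal map must be constructed directly, and the delicate points are the combinatorial encoding of pairs of blocks into single blocks and the careful use of space-level selectivity to contract the double Fubini limit without losing cofinality. A secondary difficulty is making precise, in clause (i), the claim that Tukey reductions between basically generated ultrafilters are induced by finitary (hence, after selectivity, Rudin-Keisler) data, which is exactly the content of the p-point machinery of \cite{dobtod}.
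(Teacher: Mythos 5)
The paper itself gives no proof of \Cref{tukey}: it is a list of quoted results, with (i)--(iii) cited to \cite{Blass} and \cite{dobtod} and (iv)--(v) to \cite{benhamou2023tukeytypesfubiniproducts}, so there is no in-paper argument to compare against. Your sketches are, at the level of detail given, essentially correct, and in one respect they buy something the paper does not: for (iv) and (v) you propose a direct diagonalization rather than invoking the general theorem of \cite{benhamou2023tukeytypesfubiniproducts} (that Fubini products are Tukey equivalent to Cartesian products for arbitrary ultrafilters). That direct route does work here: the map $W \mapsto \{(s,t) \in W \times W : \operatorname{max}(s) < \operatorname{min}(t)\}$ is monotone into $\mathcal{U}\cdot\mathcal{U}$, and given a base set $\bigcup_{s\in[X]}\{s\}\times[Y_s]$, the selectivity of \Cref{prop} part \ref{sel} applied to the sets $[Y_s]\cap[X]$ yields $[Z]\in\mathcal{U}$ with $[Z/s]\subseteq[Y_s]\cap[X]$ for $s\in[Z]$, so the image of $[Z]$ lies inside the base set; the same argument pushed through $\operatorname{minmax}$ handles $\mathcal{U}_{\operatorname{minmax}}^2$. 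Your treatment of (iii) via the selective diagonalization $A\mapsto[A]^2$ is the standard one, and (i)--(ii) are correctly routed through Blass's nonisomorphism, canonization, and the p-point machinery of \cite{dobtod}.

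Two caveats. First, your fallback for (iv) --- ``deduce it from $\mathcal{U}_{\operatorname{minmax}}\cong\mathcal{U}_{\operatorname{min}}\cdot\mathcal{U}_{\operatorname{max}}$ together with (ii) and (iii)'' --- is not justified as stated: it would require substituting Tukey-equivalent ultrafilters inside an iterated Fubini product, and Tukey equivalence is not known to be a congruence for Fubini products (this is precisely the kind of question \cite{benhamou2023tukeytypesfubiniproducts} addresses); keep the direct diagonalization as the actual proof. Second, in (i) Tukey incomparability requires refuting each one-directional reduction, not just equivalence. The clean argument is: if $\mathcal{U}_{\operatorname{min}}\leq_T\mathcal{U}_{\operatorname{max}}$, then since $\mathcal{U}_{\operatorname{max}}$ is selective (\Cref{3.2}), \Cref{minimality} makes $\mathcal{U}_{\operatorname{min}}$ a countable Fubini iterate of $\mathcal{U}_{\operatorname{max}}$, and being a p-point forces the iterate to be trivial, so $\mathcal{U}_{\operatorname{min}}\cong\mathcal{U}_{\operatorname{max}}$, contradicting \cite{Blass}. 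Your ``basically generated plus finitary data'' phrasing gestures at this machinery but, as written, draws the contradiction only from $\mathcal{U}_{\operatorname{min}}\equiv_T\mathcal{U}_{\operatorname{max}}$; make the one-directional version explicit.
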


To finish this section, we show that $\mathcal{U} >_T \mathcal{U}_{\operatorname{minmax}}$. Following the notation in \cite{dobcont}, for a subset $A \subseteq \omega^2$, we let $\hat{A}=\{b : (\exists a \in A) \ b \sqsubseteq a\}$. For $A \subseteq [\omega]^{<\omega}$ and $n \in \omega$, we let $A \upharpoonright n = \{a \in A : \operatorname{max}(a) < n\}$. Since $\mathcal{U}_{\operatorname{minmax}} \cong \mathcal{U}_{\operatorname{min}} \cdot \mathcal{U}_{\operatorname{max}}$ and $\mathcal{U}_{\operatorname{min}} $ and $\mathcal{U}_{\operatorname{max}}$ are p-points, Theorem $4.4$ of \cite{dobcont} applied to $\mathcal{U}_{\operatorname{min}}$ and $\mathcal{U}_{\operatorname{max}}$ in our notation yields:

\begin{lemma} \label{3.4}

Let $\mathcal{V}$ be an ultrafilter on the base set $I$ and let $f : \mathcal{U}_{\operatorname{minmax}} \to \mathcal{V}$ be a monotone cofinal map. Then there is $[\tilde{X}] \in \mathcal{U}$, an increasing sequence $(n_k)_{k \in \omega} \subseteq \omega$, and a map $\hat{f} : \{\hat{A} \upharpoonright n : A = [X]_{\operatorname{minmax}} \upharpoonright n \ \text{for some} \ X \leq \tilde{X} \ \text{and} \ n \in \omega\} \to \mathcal{P}(I)$ such that for all $X \leq \tilde{X}$ with $[X] \in \mathcal{U}$, we have $f([X]_{\operatorname{minmax}}) = \bigcup_{k \in \omega} \hat{f}(\hat{A_k})$, where $A_k = [X]_{\operatorname{minmax}} \upharpoonright n_k$ for each $k \in \omega$.

\end{lemma}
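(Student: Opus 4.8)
The plan is to obtain this as a translation of Theorem 4.4 of \cite{dobcont} across the Rudin--Keisler isomorphism $\mathcal{U}_{\operatorname{minmax}} \cong \mathcal{U}_{\operatorname{min}} \cdot \mathcal{U}_{\operatorname{max}}$ of \Cref{3.2} part \ref{3.2iii}; the substance lies in checking that the hypotheses of that theorem are met and that its conclusion --- stated for a Fubini product of p-points over $\omega$ --- unwinds correctly into the block-sequence language of the present statement. Concretely, $\operatorname{minmax}$ is finite-to-one from $\mathrm{FIN}$ onto $D = \{(i,j) \in \omega^2 : i \leq j\}$ and pushes $\mathcal{U}$ forward to $\mathcal{U}_{\operatorname{minmax}}$, which we view as an ultrafilter on $D$; composing with the base-set bijection witnessing \Cref{3.2} part \ref{3.2iii}, we regard $\mathcal{U}_{\operatorname{minmax}}$ as $\mathcal{U}_{\operatorname{min}} \cdot \mathcal{U}_{\operatorname{max}}$ and view $f$ as a monotone cofinal map from $\mathcal{U}_{\operatorname{min}} \cdot \mathcal{U}_{\operatorname{max}}$ into $\mathcal{V}$.

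For $X \in \mathrm{FIN}^{[\infty]}$ write $[X]_{\operatorname{minmax}}$ for $\operatorname{minmax}''[X]$, i.e., the set of pairs $(\operatorname{min} s, \operatorname{max} s)$ with $s \in [X]$. Since $\mathcal{U}$ is ordered-union, $\{[X]_{\operatorname{minmax}} : [X] \in \mathcal{U}\}$ is a filter base for $\mathcal{U}_{\operatorname{minmax}}$, and it is cofinal in the inclusion order on $\mathcal{U}_{\operatorname{minmax}}$. The downward $\sqsubseteq$-closure $\widehat{[X]_{\operatorname{minmax}}}$ --- and its truncation $\widehat{[X]_{\operatorname{minmax}}} \upharpoonright n$ to nodes with all entries below $n$ --- is exactly the tree of finite approximations to $[X]_{\operatorname{minmax}}$ in the sense used in \cite{dobcont}, and $\widehat{A} \upharpoonright n = \widehat{[X]_{\operatorname{minmax}} \upharpoonright n}$ when $A = [X]_{\operatorname{minmax}} \upharpoonright n$, so the domain of the map $\hat f$ in the statement is just the set of such finite trees for $X \leq \tilde X$.

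Now $\mathcal{U}_{\operatorname{min}}$ and $\mathcal{U}_{\operatorname{max}}$ are selective by \Cref{3.2} part \ref{3.2i}, hence p-points, so Theorem 4.4 of \cite{dobcont} applies to the monotone cofinal map $f : \mathcal{U}_{\operatorname{min}} \cdot \mathcal{U}_{\operatorname{max}} \to \mathcal{V}$ and produces a continuous, ``finitely generated'' representation of $f$ below some member of $\mathcal{U}_{\operatorname{min}} \cdot \mathcal{U}_{\operatorname{max}}$. By the cofinality just noted, that member may be taken of the form $[\tilde X]_{\operatorname{minmax}}$ with $[\tilde X] \in \mathcal{U}$; the representation is then a map $\hat f$ defined on finite approximation trees, and reading off the increasing sequence $(n_k)_{k \in \omega}$ of levels at which the representation decides successive portions of the image (the fusion levels produced in the proof of Theorem 4.4) turns the continuity clause of \cite{dobcont} into precisely $f([X]_{\operatorname{minmax}}) = \bigcup_{k \in \omega} \hat f(\widehat{A_k})$ with $A_k = [X]_{\operatorname{minmax}} \upharpoonright n_k$, for all $X \leq \tilde X$ with $[X] \in \mathcal{U}$. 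Here one uses that we work below the fixed $\tilde X$, together with the selectivity of $\mathcal{U}$ from \Cref{prop} part \ref{sel} to thin $\tilde X$ further if needed so that a single sequence $(n_k)$ serves all such $X$ simultaneously, and monotonicity of $f$ to pin $\hat f$ down on the trees that actually occur.

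I expect the only genuine difficulty to be the faithfulness of this dictionary: one must verify that the finite approximations of a Fubini product of p-points in \cite{dobcont} correspond exactly to the objects $\widehat{[X]_{\operatorname{minmax}}} \upharpoonright n$, that ``$[X] \in \mathcal{U}$'' is the largeness notion matching ``$[X]_{\operatorname{minmax}} \in \mathcal{U}_{\operatorname{minmax}}$'', and that the set of approximations genuinely arising is the one written as the domain of $\hat f$. A minor point to handle is that $D$ is a proper subset of $\omega^2$, so one should work on $D$ throughout, which is harmless since $[X]_{\operatorname{minmax}} \subseteq D$ always. As an alternative to citing \cite{dobcont}, the lemma can be proved from scratch by running the Pudl\'ak--R\"odl / Dobrinen--Todorcevic continuous-cofinal-map fusion directly on $\mathrm{FIN}^{[\infty]}$, diagonalizing over pairs $(i, a)$ with $i \in I$ and $a$ a finite block-sequence approximation and using \Cref{prop} part \ref{sel}; but going through \cite{dobcont} is shorter.
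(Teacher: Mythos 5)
Your proposal is correct and follows the same route as the paper: the paper likewise obtains this lemma by noting that $\mathcal{U}_{\operatorname{minmax}} \cong \mathcal{U}_{\operatorname{min}} \cdot \mathcal{U}_{\operatorname{max}}$ with $\mathcal{U}_{\operatorname{min}}, \mathcal{U}_{\operatorname{max}}$ p-points (being selective) and then quoting Theorem 4.4 of \cite{dobcont} translated into the block-sequence notation, exactly the dictionary you describe. Your additional care about matching finite approximations and cofinal bases is just the notational translation the paper leaves implicit.
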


In this case, we say that the finitary map $\hat{f}$ \textit{generates} $f$ on $\{\hat{A} \upharpoonright n : A = [X]_{\operatorname{minmax}} \upharpoonright n \ \text{for some} \ X \leq \tilde{X} \ \text{with} \ [X] \in \mathcal{U}, \ \text{and} \ n \in \omega\}$. Recall that $\mathcal{U} \geq_T \mathcal{U}_{\operatorname{minmax}}$. We now answer Question $75$ of \cite{dobtod} using this lemma:

\begin{theorem}\label{3.5}

$\mathcal{U} >_T \mathcal{U}_{\operatorname{minmax}}$.

\end{theorem}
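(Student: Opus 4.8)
The plan is to show that there is no monotone cofinal map $f : \mathcal{U} \to \mathcal{U}_{\operatorname{minmax}}$, which by \Cref{2.3} suffices to conclude $\mathcal{U} \not\leq_T \mathcal{U}_{\operatorname{minmax}}$; combined with the already-noted $\mathcal{U} \geq_T \mathcal{U}_{\operatorname{minmax}}$, this gives the strict inequality. So I would argue by contradiction: suppose $f : \mathcal{U}_{\operatorname{minmax}} \to \mathcal{U}$ were a monotone cofinal map (equivalently, witness $\mathcal{U} \leq_T \mathcal{U}_{\operatorname{minmax}}$). Apply \Cref{3.4} with $\mathcal{V} = \mathcal{U}$ to obtain $[\tilde X] \in \mathcal{U}$, an increasing sequence $(n_k)_{k\in\omega}$, and a finitary map $\hat f$ generating $f$: for every $X \le \tilde X$ with $[X] \in \mathcal{U}$, one has $f([X]_{\operatorname{minmax}}) = \bigcup_{k\in\omega}\hat f(\hat A_k)$ with $A_k = [X]_{\operatorname{minmax}}\upharpoonright n_k$.

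The core idea is that $[X]_{\operatorname{minmax}}\upharpoonright n_k$ records only the pairs $(\min s,\max s)$ for $s \in [X]$ with $\max s < n_k$ — a finite amount of coarse information — whereas $f([X]_{\operatorname{minmax}})$ is required to be a member of $\mathcal{U}$, an ultrafilter on the much richer base set $\mathrm{FIN}$. I would exploit this mismatch by building two block sequences $X, X' \le \tilde X$ that are ``$\operatorname{minmax}$-indistinguishable up to each threshold'' in the sense that $[X]_{\operatorname{minmax}}\upharpoonright n_k = [X']_{\operatorname{minmax}}\upharpoonright n_k$ for all $k$, hence $f([X]_{\operatorname{minmax}}) = f([X']_{\operatorname{minmax}})$, but with $[X] \cap [X'] \notin \mathcal{U}$ (e.g.\ engineer $[X]\cap[X']$ to be small, or to miss a set in $\mathcal{U}$). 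Then monotonicity forces $f([X]_{\operatorname{minmax}}), f([X']_{\operatorname{minmax}}) \subseteq f([Z]_{\operatorname{minmax}})$ for some common coarsening $Z$, and using that $f$ takes filter bases to filter bases one derives that $f([X]_{\operatorname{minmax}}) = f([X']_{\operatorname{minmax}})$ must be a subset of something forced to be disjoint from — or of empty intersection with — a fixed $U \in \mathcal{U}$, contradicting $f([X]_{\operatorname{minmax}}) \in \mathcal{U}$. A cleaner route to the same end: use that $\mathcal{U}$ refines the filter generated by $\{[Y] : Y \le \tilde X,\ [Y]\in\mathcal{U}\}$ together with selectivity (\Cref{prop}\ref{sel}) to diagonalize, choosing the blocks of $X$ and $X'$ on alternating coordinates so that their $\operatorname{minmax}$-profiles below each $n_k$ agree while $[X]$ and $[X']$ share only finitely many elements in each $[\tilde X/n_k]$, making $[X]\cap[X']$ small.

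Concretely I would construct $X$ and $X'$ recursively: having fixed $r_j(X) = r_j(X')$ up to a block whose max is below $n_k$, extend to the next block using \Cref{hindman} / the selectivity of $\mathcal{U}$ to keep $[X],[X']\in\mathcal{U}$, but choose the $(j{+}1)$st blocks of $X$ and of $X'$ to be \emph{different} unions of blocks of $\tilde X$ that nonetheless have the same min and the same max (this is possible because between consecutive relevant thresholds $\tilde X$ contributes several blocks, so two distinct finite unions with matching endpoints exist), and arrange that no new common element of $[X]$ and $[X']$ with max below $n_{k+1}$ is created. In the limit $[X]_{\operatorname{minmax}}\upharpoonright n_k = [X']_{\operatorname{minmax}}\upharpoonright n_k$ for every $k$ by construction, so $f([X]_{\operatorname{minmax}}) = f([X']_{\operatorname{minmax}})$, yet $[X] \cap [X']$ is small hence not in $\mathcal{U}$; taking a common lower bound $Z \le X, X'$ with $[Z] \in \mathcal{U}$ (selectivity again) and using monotonicity plus cofinality of $f$ on the base $\{[Y]: Y\le Z,\ [Y]\in\mathcal{U}\}$, I would contradict $f([X]_{\operatorname{minmax}})=f([X']_{\operatorname{minmax}})\in\mathcal{U}$ being cofinally small. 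The main obstacle I anticipate is the bookkeeping in the recursive construction: simultaneously (i) preserving membership in $\mathcal{U}$ for both $[X]$ and $[X']$, (ii) matching the full $\operatorname{minmax}$-profile below each threshold $n_k$ rather than just at a single level, and (iii) controlling $[X]\cap[X']$ so that it is genuinely not in $\mathcal{U}$ — reconciling these may require interleaving the thresholds $(n_k)$ with an auxiliary fast-growing sequence and appealing to rapidity of $\mathcal{U}$ (\Cref{3.2}\ref{3.2iv}) to guarantee enough room between consecutive blocks of $\tilde X$.
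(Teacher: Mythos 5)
There is a genuine gap, and it is structural rather than a matter of bookkeeping. The pair $(X,X')$ you propose to build cannot exist: you require both $[X],[X']\in\mathcal{U}$ and $[X]\cap[X']\notin\mathcal{U}$, but $\mathcal{U}$ is a filter, so the first condition forces $[X]\cap[X']\in\mathcal{U}$; likewise, if $[X]\cap[X']$ is small there is no common lower bound $Z\le X,X'$ at all (any such $Z$ has $[Z]\subseteq[X]\cap[X']$), so the final ``common coarsening'' step collapses. Even if you drop membership in $\mathcal{U}$ for one of the two sequences, a single pair with $[X]_{\operatorname{minmax}}=[X']_{\operatorname{minmax}}$ (note your condition ``agreement below every $n_k$'' is just literal equality of the minmax images, so no appeal to $\hat f$ is needed for it) and $[X]\cap[X']$ small produces no contradiction by itself: $f([X]_{\operatorname{minmax}})$ is merely some member of $\mathcal{U}$, with no forced relation to $[X]$ or $[X']$, so the claim that it ``must be a subset of something disjoint from a fixed $U\in\mathcal{U}$'' never gets off the ground. (Your opening sentence also has the reduction reversed --- to refute $\mathcal{U}\le_T\mathcal{U}_{\operatorname{minmax}}$ one rules out monotone cofinal maps $\mathcal{U}_{\operatorname{minmax}}\to\mathcal{U}$ --- though you assume the correct direction in the actual argument.)

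What is missing is the mechanism that ties values of $f$ to sets of a prescribed combinatorial shape, and the definability detour that makes it legitimate. After invoking \Cref{3.4} as you do, the paper extends the finitarily generated map to a map $g$ defined on \emph{all} $X\le\tilde X$, not just on $\mathcal{U}$-members, so that the sets $\mathcal{A}_1$ ($[X]\cap[\tilde X]=\varnothing$), $\mathcal{A}_2$ ($\exists X'\le X$ with $g([X']_{\operatorname{minmax}})\subseteq[X]$) and $\mathcal{A}_3$ ($g([X]_{\operatorname{minmax}})\cap[X]=\varnothing$) are closed/analytic, and then applies \Cref{prop} part \ref{infty} to get $[Z]\in\mathcal{U}$ homogeneous for $\mathcal{A}=\mathcal{A}_1\cup\mathcal{A}_2^c\cup\mathcal{A}_3$. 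On one horn one runs the minmax trick: coarsen into triples $W(i)=X(3i)\cup X(3i+1)\cup X(3i+2)$, take $W'\le W$ with $g([W']_{\operatorname{minmax}})\subseteq[W]$, and replace each $W'(j)$ by $Y(j)=X(3m_j)\cup X(3k_j+2)$, so that $[Y]_{\operatorname{minmax}}=[W']_{\operatorname{minmax}}$ while $[Y]\cap[W]=\varnothing$. The point is that $W$ and $Y$ are in general \emph{not} $\mathcal{U}$-members, which is exactly why one must work with the definable extension $g$ and the Ramsey dichotomy instead of inside $\mathcal{U}$; the other horn puts $Z$ itself (where $g=f$ since $[Z]\in\mathcal{U}$) into $\mathcal{A}$, and each of the three membership options contradicts monotone cofinality of $f$ or $f([Z]_{\operatorname{minmax}})\in\mathcal{U}$. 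Your plan keeps everything inside $\mathcal{U}$-members, which is precisely where the needed configurations cannot all live; without the extension $g$ and an application of \Cref{prop} part \ref{infty} (or some substitute for it), the argument does not close.
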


\begin{proof}

Assume towards a contradiction that $\mathcal{U}_{\operatorname{minmax}} \geq_T \mathcal{U}$. Then there is a monotone cofinal $f : \mathcal{U}_{\operatorname{minmax}} \to \mathcal{U}$. By \Cref{3.4}, we may take the corresponding $[\tilde{X}] \in \mathcal{U}$ and the finitary map $\hat{f}$ which generates $f$ on $\{\hat{A} \upharpoonright n : A = [X]_{\operatorname{minmax}} \upharpoonright n \ \text{for some} \ X \leq \tilde{X} \ \text{with} \ [X] \in \mathcal{U}, \ \text{and} \ n \in \omega\}$. Note that $\hat{f}$ generates a map, say $g$, on the set $\{X \in \mathrm{FIN}^{[\infty]} : X \leq \tilde{X}\}$ by $g([X]_{\operatorname{minmax}}) = \bigcup_{k \in \omega} \hat{f}(\hat{A_k})$, where $A_k = \hat{[X]}_{\operatorname{minmax}} \upharpoonright n_k$ for each $k \in \omega$, and $g$ agrees with $f$ on $\{X \in \mathrm{FIN}^{[\infty]} : X \leq \tilde{X} \ \text{and} \ [X] \in \mathcal{U}\}$.

Let us note that $\{(X, Y) \in \mathrm{FIN}^{[\infty]} \times \mathrm{FIN}^{[\infty]} : X \leq Y\}$ is a closed subset of $\mathrm{FIN}^{[\infty]} \times \mathrm{FIN}^{[\infty]}$. Moreover, for fixed $n_0 \in \omega$ and $s_0 \in \mathrm{FIN}$, $\{(X,Y) : s_0 \in \hat{f}(\hat{A}), \ \text{where} \ A = [X]_{\operatorname{minmax}} \upharpoonright n_{k_0} \ \text{and} \ s_0 \notin [Y]\}$ is closed. We conclude that $\{(X, Y) \in \mathrm{FIN}^{[\infty]} \times \mathrm{FIN}^{[\infty]} : X \leq \tilde{X}, f([X]_{\operatorname{minmax}}) \subseteq [Y]\}$ is also a closed subset of $\mathrm{FIN}^{[\infty]} \times \mathrm{FIN}^{[\infty]}$.

Now define $\mathcal{A}_1 = \{X \in \mathrm{FIN}^{[\infty]} : [X] \cap [\tilde{X}] = \varnothing\}$, $\mathcal{A}_2 = \{X \leq \tilde{X} : (\exists X' \leq X) \ g([X']_{\operatorname{minmax}}) \subseteq [X]\}$, and $\mathcal{A}_3 = \{X \leq \tilde{X} : g([X]_{\operatorname{minmax}}) \cap [X] = \varnothing\}$. By the previous paragraph, we see that $\mathcal{A}_1$ is closed, $\mathcal{A}_2$ is analytic, and $\mathcal{A}_3$ is closed.

We let $\mathcal{A}^c = \mathcal{A}_1^c \cap \mathcal{A}_2 \cap \mathcal{A}_3^c$. It follows that $\mathcal{A}^c$ is analytic, so \Cref{prop} part \ref{infty} is applicable to $\mathcal{A}^c$. Hence, there is $[Z] \in \mathcal{U}$ such that $[Z]^{[\infty]} \subseteq \mathcal{A}$ or $[Z]^{[\infty]} \cap \mathcal{A} = \varnothing$. Applying the argument from the proof of Theorem $74$ in \cite{dobtod} that was used to construct a stable ordered-union $\mathcal{U}$ with $\mathcal{U} >_T \mathcal{U}_{\operatorname{minmax}}$, we will show that the latter case is not possible.

Assume towards a contradiction that $[Z]^{[\infty]} \cap \mathcal{A} = \varnothing$. If there is no $X' \leq Z, \tilde{X}$, then we can use \Cref{prop} part \ref{Ramsey} to find $[X] \in \mathcal{U}$ with $[X] \cap [\tilde{X}] = \varnothing$. Hence we would have $X \in \mathcal{A}_1$, which contradicts $[Z]^{[\infty]} \cap \mathcal{A} = \varnothing$.

Now we can assume that there is some $X \leq Z, \tilde{X}$. If there is $W \leq X$ such that for all $W' \leq W$, we have $g([W']_{\operatorname{minmax}}) \nsubseteq [W]$, then $W \in \mathcal{A}_2^c$, which contradicts $[Z]^{[\infty]} \cap \mathcal{A} = \varnothing$ again.

Otherwise, for all $W \leq X$, there is some $W' \leq W$ with $g([W']_{\operatorname{minmax}})$ $ \subseteq [W]$. Let $W(i) = X(3i) \cup X(3i+1) \cup X(3i+2)$ for all $i \in \omega$. Find $W' \leq W$ such that $f([W']_{\operatorname{minmax}}) \subseteq [W]$. Then $W'(j) = \bigcup_{i \in I_j} W(i)$ for all $j \in \omega$, where $(I_j)_{j \in \omega}$ is itself a block sequence. Let $m_j = \operatorname{min}(I_j)$ and $k_j = \operatorname{max}(I_j)$. For all $j \in \omega$, set $Y(j) = X(3m_j) \cup X(3k_j+2)$. Then $[W']_{\operatorname{minmax}} = [Y]_{\operatorname{minmax}}$, $[W] \cap [Y] = \varnothing$ and $Y \leq X$. Finally, we have $$g([Y]_{\operatorname{minmax}}) = g([W']_{\operatorname{minmax}}) \subseteq [W],$$ which is disjoint from $[Y]$. Hence, $Y \in \mathcal{A}_3$ and we contradict $[Z]^{[\infty]} \cap \mathcal{A} = \varnothing$ in this case as well.

It follows that $[Z]^{[\infty]} \subseteq \mathcal{A}$, and in particular $Z \in \mathcal{A}$. If $Z \in \mathcal{A}_1$, then $[Z] \cap [\tilde{X}] = \varnothing$, which cannot happen. If $Z \in \mathcal{A}_2^c$, then there is no $[X'] \in \mathcal{U}$ with $g([X']_{\operatorname{minmax}}) \subseteq [Z]$, which contradicts the cofinality of $g$. Finally, if $Z \in \mathcal{A}_3$, then $g([Z]_{\operatorname{minmax}}) \cap [Z] = \varnothing$, which is again a contradiction. \qedhere

\end{proof}

\section{Canonization Theorems on \texorpdfstring{$\mathrm{FIN}^{[\infty]}$}{Lg}}\label{4}

This section adapts a canonization result of Klein and Spinas in \cite{KlSp} to be applied in the classification of the initial Tukey structure in the next section. The results to be stated are about defining a class of \emph{canonical} equivalence relations, and showing that every equivalence relation restricts to such a relation. Since every equivalence relation corresponds to a function and vice versa, we can state the results mentioning only functions.

We define the following maps as in \cite{KlSp}: For $s \in \mathrm{FIN}$; $\operatorname{sm}(s) = \varnothing$, $\operatorname{min-sep}(s) = \{$the least element of $s\}$, $\operatorname{max-sep}(s) = \{\text{the greatest element of $s$}\}$, $\operatorname{minmax-sep}(s)$ $= \operatorname{min-sep}(s) \cup \operatorname{max-sep}(s)$, and $\operatorname{sss}(s) = s = \operatorname{vss}(s)$.

To motivate our result in this section, let us recall the remarkable theorem of Taylor:

\begin{theorem}[\cite{Tay}]\label{taycanon}

For every function $f : \mathrm{FIN} \to \omega$, there are $X \in \mathrm{FIN}^{[\infty]}$ and $c \in \{\operatorname{sm}, \operatorname{min-sep}, \operatorname{max-sep}, \operatorname{minmax-sep}, \operatorname{id}\}$ such that for all $s, t \in [X]$; $$f(s) = f(t) \ \text{if and only if} \ c(s) = c(t).$$ 

\end{theorem}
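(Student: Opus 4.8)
\textbf{Proof proposal for Taylor's canonization theorem (\Cref{taycanon}).}

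The plan is to prove this by a single application of Hindman's theorem (\Cref{hindmanorg}) to a finite coloring that encodes ``which coordinates of $s$ the value $f(s)$ can depend on,'' followed by a case analysis. First I would fix $f : \mathrm{FIN} \to \omega$ and define a coloring $d : \mathrm{FIN}^{[3]} \to 2^{2}$ (or a finite color set of comparable size) as follows: for a block sequence $(s,t,u)$ with $s <_b t <_b u$, consider the four finite sets $s\cup u$, $s\cup t\cup u$ and record, say, whether $f(s\cup u) = f(s\cup t\cup u)$, and similarly whether $f$ is insensitive to enlarging the ``middle'' by a disjoint block on the left or on the right. The point is to detect, in a shift-invariant way, whether the value of $f$ on a union depends on its minimum, its maximum, or more. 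By \Cref{hindmanorg} applied to $d$ (which has finitely many colors, after coding triples of $\mathrm{FIN}$ as single elements of $\mathrm{FIN}$, or directly by the $l=3$ case of \Cref{hindman}), there is $X \in \mathrm{FIN}^{[\infty]}$ on which $d$ is constant on $[X]^{[3]}$; passing to $[X]$ we may assume the homogeneity holds for all relevant block triples inside $[X]$.

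Next I would argue that on $[X]$ the restriction of $f$ behaves coherently: working below $X$, the homogeneous color of $d$ tells us exactly one of a small number of ``stability patterns.'' For instance, if the color says $f(s\cup u) = f(s \cup t \cup u)$ whenever $s<_b t<_b u$ all lie in $[X]$, then by a telescoping/amalgamation argument (merging any two elements $s_1, s_2 \in [X]$ into a common refinement using a third, far-out block of $[X]$ as a ``witness'') one shows that $f(s)$ depends only on $\min(s)$ and $\max(s)$; if in addition a further component of the color kills sensitivity to the max, one gets dependence on $\min$ alone, and symmetrically for $\max$; if the color kills sensitivity to both, $f$ is constant on $[X]$, i.e.\ $c = \operatorname{sm}$ works; and if none of the stabilities hold, one argues $f$ must be injective on $[X]$, giving $c = \operatorname{id}$. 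The key technical move throughout is: given $s_1, s_2 \in [X]$ that are \emph{not} $<_b$-separated, pick $s_3 \in [X]$ with $s_1, s_2 <_b s_3$ and far enough out, and chain equalities through unions involving $s_3$ to compare $f(s_1)$ and $f(s_2)$; this is where thinning $[X]$ (or rather $X$) once more to a sub-block-sequence may be needed so that every pair of elements of $[X]$ has such a common ``separator'' available, and so that $\min$ and $\max$ separate elements of $[X]$ in the expected way.

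The last step is bookkeeping: verify that the candidate $c \in \{\operatorname{sm}, \operatorname{min-sep}, \operatorname{max-sep}, \operatorname{minmax-sep}, \operatorname{id}\}$ produced by the homogeneous color actually satisfies the biconditional $f(s) = f(t) \iff c(s) = c(t)$ for all $s,t \in [X]$. The forward direction ``$c(s) = c(t) \Rightarrow f(s) = f(t)$'' is exactly the stability extracted from Hindman homogeneity plus the amalgamation argument; the reverse direction ``$f(s) = f(t) \Rightarrow c(s) = c(t)$'' requires ruling out, for the chosen minimal $c$, that $f$ is accidentally constant on a larger equivalence than $c$ — but that is precisely what minimality of the pattern (i.e.\ the fact that the next coarser stability color did \emph{not} occur) guarantees, after possibly one more thinning of $X$.

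\textbf{Main obstacle.} The delicate point is designing the finite coloring $d$ so that a \emph{single} homogeneous block sequence simultaneously decides all the relevant dependencies (on $\min$, on $\max$, on both, on neither), and then handling the ``not $<_b$-separated'' pairs in the amalgamation argument — since elements of $[X]$ overlap heavily, one must be careful to always route comparisons through sufficiently-far-out blocks of $X$ and to iterate the thinning finitely often so that all cases close. Getting the case where $f$ is forced to be injective (the $\operatorname{id}$ case) right is the subtlest, as it is the ``negative'' case where no stability color appeared.
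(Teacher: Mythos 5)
The paper does not actually prove this statement: it is quoted as Taylor's canonical partition theorem with a citation to \cite{Tay}, and is used as a black box (e.g.\ in the proof of \Cref{longlemma}). So there is no in-paper proof to compare against; what matters is whether your sketch would really yield Taylor's theorem, and as written it has a genuine gap.

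The gap is concentrated exactly where you flag the ``main obstacle,'' and it is not a bookkeeping issue. Homogenizing one coloring of $[X]^{[3]}$ (even with a few extra bits, or adding pair colorings) only controls configurations of a \emph{fixed, small} number of block-separated pieces. The forward, ``stability'' implications (drop a middle block, hence $f$ factors through $\operatorname{minmax}$, etc.)\ do follow by your telescoping argument. But the reverse implication $f(s)=f(t)\Rightarrow c(s)=c(t)$, and in particular the $\operatorname{id}$ case, require you to show $f(s)\neq f(t)$ for \emph{arbitrary} distinct $s,t\in[X]$, including pairs that interleave through arbitrarily many blocks of $X$. Such a pair decomposes into an unbounded number of alternating runs, so its ``type'' is not captured by homogeneity of a length-$3$ (or any fixed finite length) coloring; and in the negative case you have no equalities left to chain through a far-out witness block, so the amalgamation trick gives nothing. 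The known proofs (Taylor's original argument, and Lefmann's and Klein--Spinas's extensions) handle this by homogenizing colorings of $[X]^{[k]}$ for varying $k$ together with a delicate case analysis of the relative positions of the two sets, or by the separating/mixing machinery of the kind developed in Section 4 of this paper; ``minimality of the pattern plus one more thinning'' is not a substitute for that argument. So the proposal is a reasonable opening move, but the heart of the theorem --- ruling out accidental coincidences across distinct canonical invariants, especially in the injective case --- is left unproved and does not follow from the single homogeneous coloring you set up.
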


This theorem was generalized by Klein and Spinas \cite{KlSp} in the following way:

\begin{definition}[\cite{KlSp}]

Let $\gamma : \mathrm{FIN}^{[< \infty]} \to \{\operatorname{sm}, \operatorname{min-sep}, \operatorname{max-sep},$ $\operatorname{minmax-sep},$ $\operatorname{sss}, \operatorname{vss}\}$. Let $X \in \mathrm{FIN}^{[\infty]}$. Let $k_0 = 0$ and let $(k_i : 0 < i < N \leq \omega)$ increasingly enumerate those $k$ such that $\gamma(X \upharpoonright (k-1)) = \operatorname{vss}$. Also let $k(N) = \omega$ if $N < \omega$. We define $$\Gamma_{\gamma}(X) = (\bigcup_{k_i \leq j < k_{i+1}} \gamma(X \upharpoonright j)(X(j)) : i < N),$$ with the convention that evaluating $\varnothing$ at some $s \in \mathrm{FIN}$ means we ignore $s$.

\end{definition}

\begin{theorem}[\cite{KlSp}]\label{klsp}

Let $\Delta : \mathrm{FIN}^{[\infty]} \to \mathbb R$ be a Borel map. Then there exists $A \in \mathrm{FIN}^{[\infty]}$ and $\gamma : \mathrm{FIN}^{[< \infty]} \to \{\operatorname{sm}, \operatorname{min-sep}, \operatorname{max-sep},$ $\operatorname{minmax-sep}, \operatorname{sss}, \operatorname{vss}\}$ such that for all $X, Y \leq A$; $$\Delta(X) = \Delta(Y) \ \text{if and only if} \ \Gamma_{\gamma}(X) = \Gamma_{\gamma}(Y).$$

\end{theorem}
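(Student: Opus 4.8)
The plan is to follow the usual shape of a Ramsey-classification argument on a topological Ramsey space: first pass to a subcube on which $\Delta$ is continuous, then run a fusion recursion that reads off the labelling $\gamma$ one block at a time by applying Taylor's canonization (\Cref{taycanon}) locally, and finally verify by a continuity/back-and-forth argument that the resulting $\gamma$ and the fused $A$ canonize $\Delta$.

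\emph{Step 1 (reduction to the continuous case).} Since $(\mathrm{FIN}^{[\infty]},\le,r)$ is a topological Ramsey space (Milliken), the abstract Ellentuck theorem holds for it, so every subset of $\mathrm{FIN}^{[\infty]}$ with the Baire property in the Ellentuck topology is Ramsey; in particular every metrically Borel set is. Fix a countable basis $\{I_n:n\in\omega\}$ for $\mathbb R$. Applying the Ramsey property to each $\Delta^{-1}(I_n)$ along a fusion sequence, we obtain $A_0\in\mathrm{FIN}^{[\infty]}$ such that $\Delta\upharpoonright[A_0]^{[\infty]}$ is continuous; equivalently, $\Delta(X)$ is determined by sufficiently long initial segments of $X$, uniformly on cones. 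All further work takes place below $A_0$, and we may assume $\Delta$ itself is continuous.

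\emph{Step 2 (the fusion defining $\gamma$).} Build a fusion sequence $A_0\ge A_1\ge\cdots$ with $r_n(A_n)=r_n(A_{n+1})$ and $A=\lim_n A_n$, committing at stage $n$ to the value $\gamma(a)$ for every finite block sequence $a\le r_n(A_n)$ (finitely many) and to the next block of $A$; outside $\{a:a\le A\}$ the value of $\gamma$ is irrelevant. Fix such an $a$ at stage $n$. We analyse how the $\Delta$-class of an infinite block sequence $a^{\smallfrown}X\le A_n$ depends on the block immediately following $a$: colouring $s\in[A_n/a]$ by the ``$\Delta$-relevant partial data'' common to all extensions of the shape $a^{\smallfrown}(s)^{\smallfrown}(\dots)$ and feeding this into \Cref{taycanon} (relativised to a sub-block-sequence), we obtain $B\le A_n/a$ and a canonical map $c\in\{\operatorname{sm},\operatorname{min-sep},\operatorname{max-sep},\operatorname{minmax-sep},\operatorname{id}\}$ governing this dependence throughout $[B]^{[\infty]}$; the finitely many tasks at stage $n$ are met by iterating this shrinking. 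When $c=\operatorname{id}$, a further dichotomy — comparing, among extensions agreeing on the union of the currently open chunk, whether the internal split of that chunk is $\Delta$-visible — decides whether the block forces a new output coordinate ($\gamma(a)=\operatorname{vss}$) or is amalgamated into the open chunk ($\gamma(a)=\operatorname{sss}$). Set $\gamma(a)$ to this label and $A_{n+1}=a^{\smallfrown}B$. Each stage is required to enforce both a \emph{coarseness clause} ($\Delta$ distinguishes no more than the $\Gamma_\gamma$-data above $a$ allows) and a \emph{fineness clause} (when $\gamma(a)\ne\operatorname{sm}$, resp.\ opens a chunk, $\Delta$ genuinely separates extensions according to that label), producing a ``tight'' $\gamma$.

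\emph{Step 3 (verification) and the main obstacle.} For $X,Y\le A$: if $\Gamma_\gamma(X)=\Gamma_\gamma(Y)$, then at every finite level the $\Delta$-relevant data of $X$ and $Y$ coincide by the coarseness clauses, so continuity of $\Delta$ below $A$ gives $\Delta(X)=\Delta(Y)$; if $\Gamma_\gamma(X)\ne\Gamma_\gamma(Y)$, take the least output block on which they differ and use the fineness clause at the stage where that chunk was settled, together with the bookkeeping that inside an $\operatorname{sss}$-chunk only the union of its blocks run through the relevant separating map can affect $\Delta$, to produce extensions of $X$ and $Y$ on which $\Delta$ differs — here one uses precisely that $\operatorname{min-sep},\operatorname{max-sep},\operatorname{minmax-sep}$ are the intermediate canonical maps of \Cref{taycanon} and that $\operatorname{vss}$-breaks sit exactly where a fresh coordinate is forced. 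The hard part is making the fineness clause survive the infinitely many later shrinkings and the amalgamation inside long $\operatorname{sss}$-chunks: a distinction $\Delta$ makes at stage $n$ according to $\gamma(a)$ must remain visible below $A=\lim_n A_n$ and must not be silently collapsed when later blocks are merged into the same chunk. Achieving this forces the stage-$n$ partition calculus to be applied not to single blocks over $A_n$ but to a uniform family capturing the ``entire next chunk'', using that Nash--Williams families are Ramsey (\Cref{nashwilliams}) and that uniform fronts restrict to uniform fronts on subcubes (\Cref{uniform}); carrying this diagonalization out so that the six-valued labelling is globally coherent and simultaneously tight is the technical core of the theorem.
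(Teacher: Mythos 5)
First, a remark on what you are being compared against: the paper does not prove \Cref{klsp} at all --- it is quoted from Klein--Spinas, and the paper instead re-develops its machinery in \Cref{4} to prove the front version \Cref{canonization}. Measured against that machinery, your architecture is the right one: reduce to a continuous $\Delta$ via the Ramsey property of Borel sets in the Milliken space, fuse, determine $\gamma(a)$ locally by an application of Taylor's theorem (\Cref{taycanon}), and split the identity case into $\operatorname{sss}$ versus $\operatorname{vss}$ according to whether amalgamating the next block into the current chunk is $\Delta$-visible.

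The genuine gap is that everything you package as the ``coarseness clause,'' the ``fineness clause,'' and ``the technical core'' is the actual content of the proof, and it is not supplied by the fusion bookkeeping you describe. Concretely: (1) Taylor's theorem cannot be applied to an informal notion of ``$\Delta$-relevant partial data''; it must be applied to the equivalence relation ``$a^{\smallfrown}s$ and $a^{\smallfrown}t$ are mixed,'' and for that to even be an equivalence relation one first needs the precise definition of separating/mixing of pairs $a\square$, $b\square$ (including the $\uparrow$-amalgamated variants, which your sketch never introduces) together with a proof that mixing is transitive and that every pair can be decided by shrinking (\Cref{decide}, \Cref{4.13}). (2) Your Step 3 verification needs the local labels at \emph{different} nodes to cohere: when $a$ and $b$ are mixed their separation types are heavily constrained (\Cref{4.18}), and the $\uparrow$-lemmas (\Cref{4.11}, \Cref{4.12}, \Cref{4.16}, \Cref{4.17new}) are what control how a chunk opened at $a$ interacts with a chunk opened at $b$. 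Without these cross-node lemmas, taking ``the least output block on which $\Gamma_\gamma(X)$ and $\Gamma_\gamma(Y)$ differ'' does not yield extensions on which $\Delta$ differs, because $X$ and $Y$ may reach that block through nodes of different separation types whose interaction you have not pinned down. Your own text concedes that carrying this out is the technical core; as written the proposal is a correct plan with the core omitted, not a proof. The sequence of lemmas from \Cref{decide} through \Cref{4.18} in the paper shows exactly what must be filled in.
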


In this section, using the same machinery as in \cite{KlSp} and the lemmas used in that paper, we will prove a simplified version of \Cref{klsp} for functions defined on fronts on $\mathrm{FIN}^{[\infty]}$, which will be utilized in \Cref{5} to find the exact Tukey structure below $\mathcal{U}$. This simplification includes the notion of \emph{admissibility} (\Cref{gamma}), which is motivated by the canonization of Lefmann in \cite{Lef} (see \Cref{lefcanon}). As a result, the simplified version will recover Lefmann's canonization theorem for functions on fronts of finite uniformity rank as a special case. The essential modification we need concerns the definition of $\Gamma_{\gamma}$:

\begin{definition} \label{gamma}

Let $\mathcal{F}$ be a front on $X$. Define $\hat{\mathcal{F}} = \{a \in \mathrm{FIN}^{[<\infty]} : a \sqsubseteq b \ \text{for some $b \in \mathcal{F}$}\}$, including the empty sequence. Consider a map $\gamma : \hat{\mathcal{F}}\setminus \mathcal{F} \to \{\operatorname{sm}, \operatorname{min-sep},$ $\operatorname{max-sep}, \operatorname{minmax-sep}, \operatorname{sss}, \operatorname{vss}\}$. We shall say that $\gamma$ is \textit{admissible} if the following hold:

\begin{enumerate}[label=(\roman*)]
    \item For $a \in \hat{\mathcal{F}}$, if $\gamma(a) = \operatorname{sss}$, then for all $a <_b x \in [X]^{[<\infty]}$ with $a^{\smallfrown} x \in \hat{\mathcal{F}}$, if there is a least $k$ with $\gamma(a^{\smallfrown} (x \upharpoonright k)) = \operatorname{vss}$, then for every $l < k$ we have $\gamma(a^{\smallfrown} (x \upharpoonright l)) \in \{\operatorname{sm}, \operatorname{sss}\}$.
    \item Let $a \in \hat{\mathcal{F}} \setminus \mathcal{F}$. Then either for all $a <_b x \leq X$ with $a^{\smallfrown} x \in \mathcal{F}$ and for all $k \leq |x|$, we have $\gamma(a^{\smallfrown} (x \upharpoonright k)) = \operatorname{sm}$, or for all $a <_b x \leq X$ with $a^{\smallfrown} x \in \mathcal{F}$, there is $k \leq |x|$ such that $\gamma(a^{\smallfrown} (x \upharpoonright k)) \neq \operatorname{sm}$.
    \item Let $a \in \hat{\mathcal{F}} \setminus \mathcal{F}$ be with $\gamma(a)=\operatorname{min-sep}$, and assume that for all $a <_b x \leq X$ with $a^{\smallfrown} x \in \mathcal{F}$, there is $1 \leq k \leq |x|$ such that $a^{\smallfrown}(x \upharpoonright k) \neq \operatorname{sm}$. Then either for all $a <_b x \leq X$ with $a^{\smallfrown} x \in \mathcal{F}$, we have $\gamma(a^{\smallfrown}(x \upharpoonright l)) = \operatorname{max-sep}$, where $1\leq l$ is the least with $\gamma(a^{\smallfrown}(x \upharpoonright l)) \neq \operatorname{sm}$; or for all $a <_b x \leq X$ with $a^{\smallfrown} x \in \mathcal{F}$, we have $\gamma(a^{\smallfrown}(x \upharpoonright l)) \neq \operatorname{max-sep}$, where $1 \leq l$ is the least with $\gamma(a^{\smallfrown}(x \upharpoonright l)) \neq \operatorname{sm}$.
\end{enumerate}

\noindent For admissible $\gamma$ and $a \in \hat{\mathcal{F}}$, define $(k_i)_{0 \leq i < N}$ as follows: $k_0 = 0$ and given $|a| > k_i$, if $\gamma(a \upharpoonright k_i) = \operatorname{sss}$, then $|a| > k_{i+1} > k_i$ is the least with $\gamma(a \upharpoonright (k_{i+1}-1)) = \operatorname{vss}$ if it exists, otherwise $k_{i+1} = |a|$. If $\gamma(a \upharpoonright k_i) = \operatorname{min-sep}$ and the least $k_i < l < |t|$ (if it exists) with $\gamma(a \upharpoonright l) \neq \operatorname{sm}$ satisfies $\gamma(a \upharpoonright l) = \operatorname{max-sep}$, then $k_{i+1} = l+1$. Finally, if $\gamma(a \upharpoonright k_i) \neq \operatorname{sss}$ and we are not in the previous case, then $k_{i+1} = k_i+1$. Let $N$ be the least natural number with $k_N = |a|$. Now define $$\Gamma_{\gamma}(a) = (\bigcup_{k_i \leq j < k_{i+1}} \gamma(a \upharpoonright j)(a(j)) : 0 \leq i < N).$$

\end{definition}

Now we recall Lefmann's canonization theorem for functions on fronts of finite uniformity rank, in our notation:

\begin{theorem} [\cite{Lef}] \label{lefcanon}

For every $k \in \omega \setminus \{0\}$ and every function $f : \mathrm{FIN}^{[k]} \to \omega$, there are $X \in \mathrm{FIN}$ and admissible $\gamma : [X]^{[<k]} \to \{\operatorname{sm}, \operatorname{min-sep}, \operatorname{max-sep}, $ {\rm minmax-sep}, $\operatorname{sss}, \operatorname{vss}\}$ such that $\gamma(b)=\gamma(b')$ whenever $b,b' \in [X]^{[<k]}$ are with $|b| = |b'|$, and for all $a, a' \in [X]^{[<k]}$; $$f(a)=f(a') \ \text{if and only if} \ \Gamma_{\gamma}(a) = \Gamma_{\gamma}(a').$$

\end{theorem}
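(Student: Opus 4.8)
The plan is to obtain \Cref{lefcanon} as a specialization of the simplified canonization theorem for fronts on $\mathrm{FIN}^{[\infty]}$ that is the main result of this section (generalizing \Cref{klsp}), applied to fronts of the particular form $[X]^{[k]}$, followed by a homogenization step via \Cref{hindman}. First I would note that for any $X \in \mathrm{FIN}^{[\infty]}$ the family $[X]^{[k]}$ is a front on $X$ of uniformity rank $k$, with $\widehat{[X]^{[k]}} = [X]^{[\leq k]}$, so that $\widehat{[X]^{[k]}} \setminus [X]^{[k]} = [X]^{[<k]}$; hence an admissible $\gamma$ attached to this front has exactly the domain $[X]^{[<k]}$ appearing in the statement. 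Since $[X_0]^{[k]} = \mathrm{FIN}^{[k]}$ for $X_0 = (\{i\})_{i \in \omega}$, the general theorem applied to $f$ on this front produces some $Y \leq X_0$ and an admissible $\gamma : [Y]^{[<k]} \to \{\operatorname{sm}, \operatorname{min-sep}, \operatorname{max-sep}, \operatorname{minmax-sep}, \operatorname{sss}, \operatorname{vss}\}$ such that $f(a) = f(a')$ iff $\Gamma_{\gamma}(a) = \Gamma_{\gamma}(a')$ for all $a, a' \in [Y]^{[k]}$.

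The only part of the conclusion of \Cref{lefcanon} not yet in hand is that $\gamma$ be constant on levels, i.e.\ $\gamma(b) = \gamma(b')$ whenever $|b| = |b'|$. To arrange this I would thin $Y$ finitely often: set $Y_0 = Y$ and, for $j = 1, \ldots, k-1$ in turn, apply \Cref{hindman} to the coloring $\gamma \upharpoonright [Y_{j-1}]^{[j]}$ (which uses at most six colors) to obtain $Y_j \leq Y_{j-1}$ on which $\gamma$ is constant on $[Y_j]^{[j]}$; passing to a further subcube never disturbs constancy already secured at lower levels, and level $0$ is automatic since $[Y]^{[0]} = \{\varnothing\}$. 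Put $X := Y_{k-1}$. Then $\gamma \upharpoonright [X]^{[<k]}$ is constant on each level, and it remains to verify that the passage from $Y$ down to $X$ preserves the two features of $\gamma$ we need. Admissibility is inherited because each of the clauses (i)--(iii) of \Cref{gamma} is a universally quantified statement over block sequences $\leq Y$ extending a given node — and, in the case of the dichotomies (ii) and (iii), whichever alternative holds over $Y$ continues to hold over $X \leq Y$ — so each clause stays true when the quantifier is restricted to block sequences $\leq X$. The canonization of $f$ persists because $[X]^{[k]} \subseteq [Y]^{[k]}$ and, for $a \in [X]^{[k]}$, the value $\Gamma_{\gamma}(a)$ is computed solely from the (unchanged) values of $\gamma$ on the proper initial segments of $a$.

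Relative to the general canonization theorem — whose proof, adapting the machinery and auxiliary lemmas of \cite{KlSp}, is the actual substance of the section — this deduction is routine; note that the case $k = 1$ is already \Cref{taycanon}, with $\operatorname{sss}$ in the role of $\operatorname{id}$. The one point that needs attention is the order of operations: the Hindman thinning must be carried out \emph{after} admissibility has been obtained, so that one may invoke its inheritance under restriction, rather than homogenizing first — level-constancy alone does not imply admissibility, since clause (i) of \Cref{gamma} genuinely restricts how $\operatorname{sss}$, $\operatorname{vss}$ and $\operatorname{sm}$ may appear along a single branch.
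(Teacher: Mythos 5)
Your proposal is correct and follows essentially the same route as the paper: specialize \Cref{canonization} to the front $\mathrm{FIN}^{[k]}$ and then thin so that $\gamma$ becomes constant on each level, the only (cosmetic) difference being that you homogenize level-by-level with $k-1$ applications of \Cref{hindman}, whereas the paper colors each $a \in [Y]^{[k]}$ by the full tuple $(\gamma(\varnothing), \gamma(a\upharpoonright 1), \ldots, \gamma(a\upharpoonright(k-1)))$ and applies \Cref{nashwilliams} once. Your explicit checks that admissibility and the canonization of $f$ persist under passing to a sub-block-sequence are correct and merely spell out what the paper leaves implicit.
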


We will use the separating and mixing technique which has its origins in \cite{PrVo} and was the main tool in \cite{KlSp}. Analogously to \cite{KlSp}, for $a=(a(0), \ldots, a(n-1)) <_b b=(b(0),\ldots,b(m-1)) \in \mathrm{FIN}^{[<\infty]}$, $a ^{\smallfrown} b$ denotes the block sequence $(a(0),\ldots, a(n-1), b(0), \ldots, b(m-1))$, and $a \uparrow^{\smallfrown} b$ denotes the block sequence $(a(0),\ldots, a(n-1) \cup b(0), \ldots, b(m-1))$. Finally, $a \square$ is a placeholder to mean either $a$ or $a \uparrow$.

The main theorem of this section is the following, a simplification of \Cref{klsp} that employs \Cref{gamma} to define $\Gamma_{\gamma}$, which has \Cref{lefcanon} as a special case:

\begin{theorem}\label{canonization}

Let $\mathcal{F}$ be a front on some $X \in \mathrm{FIN}^{[\infty]}$, and let $g : \mathcal{F} \to \omega$ be a function. Then there is $Y \leq X$ and $\gamma : (\hat{\mathcal{F}}\setminus \mathcal{F})|Y \to \{\operatorname{sm}, \operatorname{min-sep},$ $\operatorname{max-sep},$ $\operatorname{minmax-sep}, \operatorname{sss}, \operatorname{vss}\}$ such that for each $a, a' \in \mathcal{F}|Y$, $$g(a) = g(a') \ \text{if and only if} \ \Gamma_{\gamma}(a) = \Gamma_{\gamma}(a').$$

\end{theorem}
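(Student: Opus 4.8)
The plan is to reduce \Cref{canonization} to \Cref{klsp} by the standard device of coding a front as a Borel function on $\mathrm{FIN}^{[\infty]}$, then translating the Klein--Spinas canonical form $\Gamma_\gamma$ (defined on all of $\mathrm{FIN}^{[\infty]}$) into the front-adapted $\Gamma_\gamma$ of \Cref{gamma}, and finally using Hindman-type thinning (\Cref{hindman}) and the Ramsey property of fronts (\Cref{nashwilliams}, \Cref{uniform}) to force the function $\gamma$ to satisfy the admissibility clauses. First I would fix the front $\mathcal{F}$ on $X$ and the function $g : \mathcal{F} \to \omega$, and define a map $\Delta : [X]^{[\infty]} \to \mathbb{R}$ by $\Delta(Z) = g(a_Z)$, where $a_Z$ is the unique element of $\mathcal{F}$ with $a_Z \sqsubset Z$; since $\mathcal{F}$ is a front (hence by \Cref{uniform} refinable to a uniform, in particular closed-in-the-relevant-sense family), the assignment $Z \mapsto a_Z$ is continuous on $[X]^{[\infty]}$, so $\Delta$ is Borel (indeed continuous). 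Apply \Cref{klsp} to $\Delta$ to obtain $A \leq X$ and $\gamma_0 : \mathrm{FIN}^{[<\infty]} \to \{\operatorname{sm}, \operatorname{min-sep}, \operatorname{max-sep}, \operatorname{minmax-sep}, \operatorname{sss}, \operatorname{vss}\}$ with $\Delta(Z) = \Delta(Z') \iff \Gamma_{\gamma_0}(Z) = \Gamma_{\gamma_0}(Z')$ for all $Z, Z' \leq A$. Restricting $\mathcal{F}$ to $A$ we get a front $\mathcal{F} | A$ on $A$ with the same values, so WLOG $A = X$.

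The next step is the translation. The point is that $\Gamma_{\gamma_0}(Z)$ only depends on an initial segment of $Z$ once $Z \succ a$ for $a \in \mathcal{F}$: indeed for $Z \succ a_Z \in \mathcal{F}|X$, the value $\Delta(Z) = g(a_Z)$ depends only on $a_Z$, so by the canonical equation $\Gamma_{\gamma_0}(Z)$ is determined by $a_Z$ (after possibly thinning $X$ further using \Cref{hindman} applied levelwise on $\hat{\mathcal{F}}|X$ to stabilize, for each node $a$ of $\hat{\mathcal{F}}|X$, whether $\gamma_0(a) = \operatorname{sm}$, whether it equals $\operatorname{sss}$, etc.). I would then define $\gamma$ on $(\hat{\mathcal{F}} \setminus \mathcal{F})|Y$ (for the appropriately thinned $Y \leq X$) essentially by $\gamma = \gamma_0 \upharpoonright (\hat{\mathcal{F}}\setminus\mathcal{F})|Y$, modulo the bookkeeping needed to make the clauses of \Cref{gamma} hold. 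Here one must check that the $k_i$-blocking prescribed in \Cref{gamma} (which handles $\operatorname{sss}/\operatorname{vss}$ runs and the $\operatorname{min-sep}\cdots\operatorname{max-sep}$ pairing exactly as $\Gamma_{\gamma_0}$ does, but only along nodes of $\hat{\mathcal{F}}$) produces the same tuple as the restriction of $\Gamma_{\gamma_0}(Z)$ to the coordinates below $|a_Z|$; this is a finitary unwinding of the two definitions of $\Gamma$, using the separating/mixing vocabulary ($a^{\smallfrown}b$, $a\uparrow^{\smallfrown}b$, $a\square$) exactly as in \cite{KlSp}.

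The main obstacle, and where most of the work lies, is enforcing \emph{admissibility} — clauses (i), (ii), (iii) of \Cref{gamma} — on the restricted $\gamma$. The raw $\gamma_0$ coming out of \Cref{klsp} has no reason to be constant across branches of $\hat{\mathcal{F}}$ at a given node's successors, nor to satisfy the dichotomies ``all $\operatorname{sm}$ vs.\ eventually not $\operatorname{sm}$'' or ``the first non-$\operatorname{sm}$ value is $\operatorname{max-sep}$ everywhere vs.\ nowhere''. To fix this I would iterate \Cref{hindman} and the Nash-Williams/Ramsey property (\Cref{nashwilliams}) along $\hat{\mathcal{F}}$: for each relevant node $a$, the statement ``there exists $k \leq |x|$ with $\gamma_0(a^{\smallfrown}(x\upharpoonright k)) \neq \operatorname{sm}$'' defines, via the front $\mathcal{F}_{(a)}$, a subset of a Nash-Williams family on $X/a$, so it is Ramsey and can be made to hold or fail uniformly on a further refinement; similarly for the $\operatorname{max-sep}$-at-first-jump dichotomy (clause (iii)) and the $\operatorname{sss}$-run condition (clause (i)). Since $\mathcal{F}$ is uniform of some rank $\alpha < \omega_1$, this is a transfinite recursion of length $\alpha$ along $\hat{\mathcal{F}}$; at limit stages one uses the smallness condition in the definition of uniform family together with a diagonal/fusion argument to amalgamate the countably many refinements into a single $Y \leq X$ (this is exactly the kind of fusion available on the topological Ramsey space $\mathrm{FIN}^{[\infty]}$). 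After this thinning, the surviving $\gamma$ is admissible, the translation of the previous paragraph goes through verbatim on $Y$, and we obtain $g(a) = g(a') \iff \Gamma_\gamma(a) = \Gamma_\gamma(a')$ for all $a, a' \in \mathcal{F}|Y$, as required. Finally, specializing to $\mathcal{F} = [X]^{[k]}$ (finite uniformity rank) collapses the recursion to $k$ steps and, after an extra use of \Cref{hindman} to make $\gamma$ depend only on $|b|$, recovers \Cref{lefcanon}.
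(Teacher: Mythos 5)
Your reduction has the right starting point (the observation that $g$ induces a continuous map $\Delta$ on block sequences, with $\Delta(Z)$ depending only on the unique $a_Z\in\mathcal{F}$ with $a_Z\sqsubset Z$, is correct, and the paper itself notes this), but the step you call ``bookkeeping''/``finitary unwinding'' is where essentially all of the mathematical content of \Cref{canonization} lives, and as written it has a genuine gap. The canonical form of \Cref{klsp} and the front-adapted form of \Cref{gamma} use \emph{different blockings}: Klein--Spinas merge everything between consecutive $\operatorname{vss}$ positions into a single union, while \Cref{gamma} closes a coordinate at every non-$\operatorname{sm}$ position except for the special $\operatorname{min-sep}$-then-$\operatorname{max-sep}$ merge. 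Equality of the merged (KS) tuples does not automatically coincide with equality of the finer front-blocked tuples: for instance, a node whose continuation reads $(\operatorname{min-sep},\operatorname{vss})$ and one reading $(\operatorname{minmax-sep})$ can produce equal KS coordinates $\{\operatorname{min}(s)\}\cup t=\{\operatorname{min}(s'),\operatorname{max}(s')\}$ while the \Cref{gamma}-tuples have different lengths. Ruling such configurations out on a suitable $Y$ --- i.e.\ showing that $g$-equal elements of $\mathcal{F}|Y$ cannot exhibit mismatched separation patterns --- is exactly what the paper's cross-type lemmas (\Cref{4.16}--\Cref{4.20}) and the case analysis in Claims 1--4 accomplish, and none of that information is available from the \emph{statement} of \Cref{klsp} used as a black box: the theorem hands you some $\gamma_0$ with no structural guarantees (it does not even promise $A\leq X$ as quoted), and you cannot recover the needed guarantees afterwards.

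The same problem undermines your plan to ``enforce admissibility by thinning.'' Clauses (ii) and (iii) of \Cref{gamma} are genuine dichotomies, so \Cref{nashwilliams} plus fusion can indeed arrange them (this matches the paper's canonicity clauses (iv) and (v)). But clause (i) is a single universally quantified requirement, not a dichotomy: Ramsey homogenization of ``there is a violating level'' could land on the bad side, and since $\gamma_0$ is a fixed function, passing to $Y\leq A$ does not change its values. Excluding the bad alternative requires knowing that $\gamma_0$ reflects the separating/mixing behaviour of $g$ (in the paper this is \Cref{4.19}, proved from the mixing machinery, not by thinning). So to make your route work you would have to re-derive the separating/mixing lemmas for the front anyway --- which is precisely what the paper does: it does not invoke \Cref{klsp} at all, but redefines separation/mixing relative to $\mathcal{F}$ and $g$, adds the extra canonicity conditions, defines $\gamma$ directly from the separation types (\Cref{parameter}), and verifies the equivalence via the four Claims. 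As it stands, your proposal replaces that work with an assertion, and the assertion is false without it.
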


Let us now define the notions we need and do the preparation for the proof of \Cref{canonization}. Fix a front $\mathcal{F}$ on $X \in \mathrm{FIN}^{[\infty]}$ and let $g : \mathcal{F} \to \omega$ be a function. We may as well assume that $\mathcal{F} \neq \{\varnothing\}$, since otherwise the result will follow trivially. By \Cref{uniform}, we may assume that $\mathcal{F}$ is $\alpha$-uniform for some $\alpha < \omega_1$ to ease up the computations. Let $\varnothing \neq a,b \in \hat{\mathcal{F}}$ and for $Y \leq X$, denote $Y/(a,b) = Y/\operatorname{max}(a(|a|-1) \cup b(|b|-1))$.

\begin{definition}[\cite{KlSp}]

For $Y \leq X$ and $a, b \in \hat{\mathcal{F}}$, we say that \textit{$Y$ separates $a\square$ and $b\square$} if for every $x, y \leq Y / (a, b)$ such that $a\square^{\smallfrown}x, b\square^{\smallfrown} y \in \mathcal{F}$, $g(a\square^{\smallfrown}x) \neq g(b\square^{\smallfrown}y)$ (note that we allow the cases $x = \varnothing$ or $y = \varnothing$). We say that \textit{$Y$ mixes $a\square$ and $b\square$} if no $Z \leq Y$ separates $a\square$ and $b\square$. Finally, we say that \textit{$Y$ decides $a\square$ and $b \square$} if it either separates or mixes $a\square$ and $b\square$.

\end{definition}

The proof of the following general Ramsey theoretic fact is a straightforward fusion argument and in the lines of the proof of Lemma $4.6$ in \cite{r1}:

\begin{lemma}\label{fusion}

1) Let $P(\cdot,\cdot)$ be a property such that:

\begin{enumerate}[label=(\roman*)]
    \item For all $a \in \hat{\mathcal{F}}$ and $Z \leq Y \leq X$, $P(a\square, Y) \Rightarrow P(a\square, Z)$ (hereditary).
    \item For all $a \in \hat{\mathcal{F}}$ and $Y \leq X$, there is $Z \leq Y$ with $P(a\square, Z)$ (density).
\end{enumerate}

Then for all $Y \leq X$, there is $Y' \leq Y$ such that for all $a \in \hat{\mathcal{F}} | Y'$ and for all $Z \leq Y'$, $P(a\square, Z/a)$ holds.

2) Let $Q(\cdot,\cdot,\cdot)$ be a property such that:

\begin{enumerate}[label=(\roman*)]
    \item For all $a, b \in \hat{\mathcal{F}}$ and $Z \leq Y \leq X$, $Q(a\square, b\square, Y) \Rightarrow Q(a\square, b\square, Z)$ (hereditary).
    \item For all $a,b \in \hat{\mathcal{F}}$ and $Y \leq X$, there is $Z \leq Y$ with $Q(a\square, b\square, Z)$ (density).
\end{enumerate}

Then for all $Y \leq X$, there is $Y' \leq Y$ such that for all $a,b \in \hat{\mathcal{F}} | Y'$ and for all $Z \leq Y'$, $Q(a\square, b\square, Z/(a,b))$ holds.

\end{lemma}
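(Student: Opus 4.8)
The plan is to prove \Cref{fusion} by the standard ``diagonalization into a fusion sequence'' technique for topological Ramsey spaces, doing part 1) first and then part 2) with only notational changes. I would first reduce the statement to a single-step lemma: given $Y \leq X$ and a \emph{fixed} $a \in \hat{\mathcal{F}}|Y$, and using density together with a routine closure argument, there is $Z \leq Y$ with $Z/a = Y/a$ up to the support of $a$ and such that $P(a\square, Z/a)$ holds; the point is that there are only finitely many $a \in \hat{\mathcal{F}}$ with a given ``length of support in $Y$'', so we can handle them one at a time. Concretely, enumerate $\hat{\mathcal{F}}|X$ (or rather the relevant initial segments thereof) in a sequence $(a_n)_{n\in\omega}$ so that the support of $a_n$ is contained in $r_{n}(Y_n)$ for the block sequence $Y_n$ built so far.

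The heart is the fusion construction. I would build a decreasing sequence $Y = Y_0 \geq Y_1 \geq Y_2 \geq \cdots$ in $([X]^{[\infty]}, \leq)$ together with an increasing sequence of finite block sequences $b_0 \sqsubset b_1 \sqsubset \cdots$ with $b_n \sqsubset Y_n$ and $|b_n| = n$, so that $Y_{n+1}$ agrees with $Y_n$ on the first $n$ blocks (i.e.\ $r_n(Y_{n+1}) = r_n(Y_n) = b_n$) and $Y_{n+1}/b_n$ witnesses $P(a\square, \cdot)$ for every $a \in \hat{\mathcal{F}}$ whose support lies within $b_n$ and for which we have not yet secured $P$ — there are finitely many such $a$ and the two cases $\square \in \{\text{(nothing)}, \uparrow\}$, so we apply the single-step lemma finitely many times inside stage $n$, using the hereditary property (i) to preserve the work done at earlier sub-steps. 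The fusion $Y' := \bigcup_n b_n = \lim_n Y_n$ is then an infinite block sequence with $Y' \leq Y$. For any $a \in \hat{\mathcal{F}}|Y'$, the support of $a$ is contained in some $b_n$, so $a$ was considered at stage $n$, and for any $Z \leq Y'$ we have $Z/a \leq Y'/a \leq Y_{n+1}/b_n$ (the last because $Y'$ and $Y_{n+1}$ agree past $b_n$); hence $P(a\square, Y_{n+1}/b_n)$ and heredity give $P(a\square, Z/a)$. That is exactly the conclusion.

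For part 2), the only change is that at stage $n$ I must secure $Q(a\square, b\square, \cdot)$ for all \emph{pairs} $a, b \in \hat{\mathcal{F}}$ with supports inside $b_n$; since there are still only finitely many such pairs, and heredity (i) for $Q$ lets earlier sub-steps survive later ones, the same finite iteration inside each stage works, and the fusion limit $Y'$ satisfies: for $a,b \in \hat{\mathcal{F}}|Y'$ with supports inside $b_n$ and any $Z \leq Y'$, $Z/(a,b) \leq Y_{n+1}/b_n$, so $Q(a\square, b\square, Z/(a,b))$ holds. I would point to the proof of Lemma~4.6 in \cite{r1} as the template, as the paper already does, so only the setup needs to be spelled out.

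The main obstacle — really a bookkeeping obstacle rather than a conceptual one — is managing the interaction between the requirement ``$Y_{n+1}$ agrees with $Y_n$ on the first $n$ blocks'' and the density axiom, which only hands us \emph{some} $Z \leq Y_n$ with no control over its initial segment: the fix is the usual one, namely to apply density to $Y_n/b_n$ (on the base set ``past $b_n$'') rather than to $Y_n$ itself, and then prepend $b_n$; one has to check this is legitimate, i.e.\ that $P$ and $Q$ only constrain behavior past the support of their block-sequence arguments, which is visible from the definitions of ``separates/mixes'' (everything is quantified over $x, y \leq Y/(a,b)$). A secondary point to be careful about is that the single-step density lemma for a fixed $a$ may itself require a sub-fusion if $P$ is not immediately dense in one step, but in the intended applications (deciding a pair, or choosing the ``decided'' alternative) density \emph{is} one-step or follows from \Cref{hindman}, so I would simply invoke axiom (ii) as given and not worry about it here.
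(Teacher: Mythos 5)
Your proposal is precisely the argument the paper intends: the paper gives no written proof, saying only that this is ``a straightforward fusion argument in the lines of Lemma 4.6 of \cite{r1}'', and your stage-by-stage fusion (fix $b_n$, apply density finitely often to $Y_n/b_n$ using heredity to preserve earlier sub-steps, prepend $b_n$, take the limit $Y'$) is exactly that argument, for both parts. One small repair in the verification: the inequality $Y'/a \leq Y_{n+1}/b_n$ can fail when $\operatorname{max}(a) < \operatorname{max}(b_n)$, so instead take the least $n$ with $a \leq b_n$ (then the last block of $a$ uses the last block of $b_n$, so $\operatorname{max}(a) = \operatorname{max}(b_n)$ and $Z/a = Z/b_n \leq Y'/b_n \leq Y_{n+1}/b_n$), which your construction already covers since $a$ was handled at that stage; the analogous choice works for pairs in part 2).
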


It follows by definition that for every $a,b \in \hat{\mathcal{F}}$ and for every $Y \leq X$, there is $Z \leq Y$ which decides $a\square$ and $b\square$. Also, if $Y$ decides $a\square$ and $b\square$, then any $Z \leq Y$ decides $a\square$ and $b\square$ in the same way as $Y$ does. This observation together with \Cref{fusion} implies the following:

\begin{lemma}[\cite{KlSp}]\label{decide}

For all $Y \leq X$, there is $Y' \leq Y$ such that for all $a,b \in \hat{\mathcal{F}} | Y'$, $a\square$ and $b\square$ are decided by $Y'$.

\end{lemma}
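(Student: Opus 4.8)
\textbf{Proof proposal for \Cref{decide}.}

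The plan is to derive \Cref{decide} directly from part 2) of \Cref{fusion} by taking $Q$ to be the ``deciding'' relation. Concretely, I would set $Q(a\square, b\square, Y)$ to hold if and only if $Y$ decides $a\square$ and $b\square$, i.e. $Y$ either separates or mixes $a\square$ and $b\square$. It then suffices to verify the two hypotheses of \Cref{fusion}(2) for this $Q$.

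The hereditary condition (i) is the easy half: I would argue that if $Y$ separates $a\square$ and $b\square$, then so does every $Z \leq Y$, since $Z/(a,b) \leq Y/(a,b)$ and hence the family of pairs $(x,y)$ with $a\square^{\smallfrown}x, b\square^{\smallfrown}y \in \mathcal{F}$ and $x,y \leq Z/(a,b)$ is a subfamily of the corresponding family for $Y$; the inequality $g(a\square^{\smallfrown}x) \neq g(b\square^{\smallfrown}y)$ is then inherited. If instead $Y$ mixes $a\square$ and $b\square$ — meaning no $Z \leq Y$ separates them — then trivially no $Z' \leq Z \leq Y$ separates them, so $Z$ also mixes them. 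In either case $Q(a\square, b\square, Z)$ holds, which is exactly (i). For the density condition (ii), I would invoke the observation stated just before \Cref{decide}: for every $a,b \in \hat{\mathcal{F}}$ and every $Y \leq X$ there is $Z \leq Y$ that decides $a\square$ and $b\square$. This is itself immediate: either some $Z \leq Y$ separates $a\square$ and $b\square$, in which case that $Z$ decides them, or no such $Z$ exists, in which case $Y$ itself mixes them, and we may take $Z = Y$. Either way we obtain $Z \leq Y$ with $Q(a\square, b\square, Z)$.

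With both hypotheses verified, \Cref{fusion}(2) applied with this $Q$ and with the given $Y \leq X$ yields $Y' \leq Y$ such that for all $a, b \in \hat{\mathcal{F}}|Y'$ and all $Z \leq Y'$, $Q(a\square, b\square, Z/(a,b))$ holds; in particular, taking $Z = Y'$, we get that $Y'$ decides every pair $a\square$, $b\square$ with $a, b \in \hat{\mathcal{F}}|Y'$, which is the statement of \Cref{decide}. (One should also note that $Q$ must be interpreted to range over the four combinations $a$ or $a\uparrow$ paired with $b$ or $b\uparrow$; the argument above is uniform in these choices, so this causes no difficulty.)

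I do not expect any serious obstacle here: the real content has been packaged into \Cref{fusion}, whose proof is described as ``a straightforward fusion argument,'' and into the elementary observation preceding \Cref{decide}. The only point requiring a little care is making sure the $\square$-placeholder bookkeeping is handled consistently — i.e. that the hereditariness and density statements are applied to each of the relevant $a\square$/$b\square$ combinations — but since the definitions of separating and mixing are phrased uniformly in the placeholder, this is purely notational.
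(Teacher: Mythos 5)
Your proposal is correct and follows essentially the same route as the paper: the paper proves \Cref{decide} precisely by noting that deciding is hereditary and dense (the observation stated just before the lemma) and then applying \Cref{fusion}(2). The only unstated micro-step (that $Q(a\square,b\square,Y'/(a,b))$ yields that $Y'$ itself decides the pair) is immediate since separating and mixing are defined via block sequences below $Y/(a,b)$, and the paper glosses over it in the same way.
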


We note that our separating-mixing notions are technically different from the ones used in \cite{KlSp}. The notions there were defined with respect to block sequences starting after $a$ and $b$, since the function to be canonized there was a Borel function defined on $\mathrm{FIN}^{[\infty]}$. Here, our theorem will be a simplified version only for fronts, so the notions are adjusted in a suitable way. Since any function $g : \mathcal{F} \to \omega$ induces a unique continuous function on $\mathrm{FIN}^{[\infty]}$, almost all of the lemmas in \cite{KlSp} will have straightforward translations to our setting and the proofs will essentially be the same. As a result, it should be noted here that most of the lemmas and definitions in this section will be direct translations of those that were used in \cite{KlSp}, and they will be included for the readability of the proof of the main result in this section.

The first of such lemmas is the transitivity of mixing, which directly follows from Lemma $2.3$ in \cite{KlSp}:

\begin{lemma}[\cite{KlSp}]

Let $a,b,c \in \hat{\mathcal{F}}$ and let $Y \leq X$. If $Y$ mixes $a\square$ and $b\square$ and $Y$ mixes $b\square$ and $c\square$, then $Y$ mixes $a\square$ and $c\square$.

\end{lemma}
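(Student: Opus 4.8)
The plan is to argue by contradiction, leaning on the observation recorded just before \Cref{decide} that mixing is inherited by every further restriction: if $Y$ does not mix $a\square$ and $c\square$, then by definition some $Z\leq Y$ separates them, and $Z$ still mixes $a\square,b\square$ and $b\square,c\square$ precisely because $Z\leq Y$. So after replacing $Y$ by such a $Z$, I may assume that $Y$ \emph{separates} $a\square$ and $c\square$ while still mixing $a\square,b\square$ and $b\square,c\square$, and the goal becomes to produce some $W\leq Y$ that \emph{separates} $a\square$ and $b\square$ --- which will contradict the hypothesis that $Y$ mixes them, since $W\leq Y$.

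To manufacture such a $W$, I would fix once and for all the common reference tail $A:=Y/\max(a(|a|-1)\cup b(|b|-1)\cup c(|c|-1))$, and note that the family $\mathcal{F}_{(b\square)}$ of all $y$ with $b\square^{\smallfrown}y\in\mathcal{F}$ is Nash-Williams, hence Ramsey by \Cref{nashwilliams}. Applying this to the $2$-coloring of $\mathcal{F}_{(b\square)}$ that marks $y$ ``matched'' when some $x\leq A$ has $a\square^{\smallfrown}x\in\mathcal{F}$ and $g(a\square^{\smallfrown}x)=g(b\square^{\smallfrown}y)$ and ``unmatched'' otherwise, I obtain $W\leq A$ on which every member of $\mathcal{F}_{(b\square)}|W$ has the same color. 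If that color is ``unmatched'', then $W$ separates $a\square$ and $b\square$ directly: for any $x,y\leq W$ with $a\square^{\smallfrown}x,b\square^{\smallfrown}y\in\mathcal{F}$ the sequence $y$ lies in $\mathcal{F}_{(b\square)}|W$ hence is unmatched, and since $x\leq W\leq A$ this forces $g(a\square^{\smallfrown}x)\neq g(b\square^{\smallfrown}y)$. If that color is ``matched'', I would instead use that $Y$ mixes $b\square$ and $c\square$, so $W$ does not separate them: there are $y,z\leq W$ with $b\square^{\smallfrown}y,c\square^{\smallfrown}z\in\mathcal{F}$ and $g(b\square^{\smallfrown}y)=g(c\square^{\smallfrown}z)$; as $y\in\mathcal{F}_{(b\square)}|W$ is matched, some $x\leq A$ gives $g(a\square^{\smallfrown}x)=g(b\square^{\smallfrown}y)=g(c\square^{\smallfrown}z)$, and since $x,z\leq A\leq Y/(a,c)$ this contradicts the separation of $a\square$ and $c\square$ by $Y$. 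Either way a contradiction is reached.

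The point that needs care --- and the reason this is isolated as a lemma rather than handled in passing --- is the choice of the single tail $A$: it must be deep enough to sit inside $Y/(a,c)$ (so the witness produced in the ``matched'' case genuinely violates separation of $a\square,c\square$) and also, once $W$ is taken below $A$, to control every extension relevant to separation of $a\square,b\square$ (so the ``unmatched'' case closes); using one reference tail past all of $a$, $b$, $c$ is exactly what makes the Ramsey homogenization serve both cases at once, and it is also what makes ``unmatched'' downward absolute. I would also have to dispatch the degenerate instances allowed by the definitions --- the placeholder $\square$ standing for $a$ versus $a\uparrow$, and the boundary cases $x=\varnothing$ or $y=\varnothing$ where $a\square$, $b\square$, or $c\square$ already lies in $\mathcal{F}$ --- but these are routine. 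The whole argument is the translation of Lemma $2.3$ of \cite{KlSp} to the present ``front'' versions of separating and mixing, and it feeds, via \Cref{fusion} and \Cref{decide}, into the later organization of mixing into an equivalence relation.
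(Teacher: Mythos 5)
Your argument is correct, and it is genuinely a different route from the paper's: the paper offers no proof at all for this lemma, deferring to Lemma $2.3$ of \cite{KlSp}, whose Borel-setting argument runs through the Klein--Spinas mixing machinery (infinite-dimensional Ramsey/Milliken-type arguments for Borel sets of block sequences), whereas you give a self-contained finitary proof tailored to fronts. Your reduction (pass to a $Z\leq Y$ separating $a\square,c\square$, using that mixing is inherited by restrictions), the single reference tail $A$ past $a,b,c$ (so that $W/(a,b)\leq W\leq A\leq Y/(a,c)$ makes ``unmatched'' downward absolute and makes the matched witness land in $Y/(a,c)$), and the two-coloring of $\mathcal{F}_{(b\square)}$ homogenized via \Cref{nashwilliams} all check out; both homogeneous colors lead to the intended contradictions, and vacuous or empty-extension instances cause no harm. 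What this buys is an elementary proof using only the Nash--Williams theorem rather than the Borel canonization apparatus, which fits the paper's own observation that a function on a front induces a continuous map. The one point you wave at that deserves an explicit line is the Nash--Williams-ness of $\mathcal{F}_{(b\square)}$ in the decorated case: for $b\in\mathcal{F}$ and $\square=\uparrow$ one must fix the (intended) convention that the $\uparrow$-side is only concatenated with nonempty sequences, so that $\varnothing\notin\mathcal{F}_{(b\uparrow)}$; otherwise the family contains $\varnothing$ together with nonempty elements and is not literally Nash--Williams, and the empty continuation would escape your homogenization. Under that convention your deferred ``degenerate instances'' are indeed routine, and the proof is complete.
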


Now that we know mixing is an equivalence relation, let us note the following direct consequence of \Cref{taycanon}:

\begin{lemma}[\cite{KlSp}]\label{longlemma}

For every $a \in \hat{\mathcal{F}} \setminus \mathcal{F}$, there is $Y \leq X$ which decides every $b\square$ and $c\square$ with $b,c \in \hat{\mathcal{F}}|Y$, and one of the following holds:

\begin{enumerate}[label=(\roman*)]
    \item For all $s,t \in [Y / a]$, $a^{\smallfrown}s$ and $a^{\smallfrown}t$ are mixed by $Y$.
    \item For all $s,t \in [Y / a]$, $a^{\smallfrown}s$ and $a^{\smallfrown}t$ are mixed by $Y$ if and only if $\operatorname{min-sep}(s) = \operatorname{min-sep}(t)$.
    \item For all $s,t \in [Y / a]$, $a^{\smallfrown}s$ and $s^{\smallfrown}t$ are mixed by $Y$ if and only if $\operatorname{max-sep}(s) = \operatorname{max-sep}(t)$.
    \item For all $s,t \in [Y / a]$, $a^{\smallfrown}s$ and $a^{\smallfrown}t$ are mixed by $Y$ if and only if $\operatorname{minmax-sep}(s)$ $=\operatorname{minmax-sep}(t)$.
    \item For all $s,t \in [Y / a]$, $a^{\smallfrown}s$ and $a^{\smallfrown}t$ are mixed by $Y$ if and only if $s=t$.
\end{enumerate}

\end{lemma}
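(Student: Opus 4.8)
\textbf{Proof proposal for \Cref{longlemma}.}

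The plan is to reduce this lemma to Taylor's canonization theorem (\Cref{taycanon}) applied to the coloring of singletons-after-$a$ induced by the mixing relation. First I would fix $a \in \hat{\mathcal{F}} \setminus \mathcal{F}$. By \Cref{decide} (or rather its underlying fusion argument, \Cref{fusion} part 2), start by passing to a $Y_0 \leq X$ that decides $b\square$ and $c\square$ for all $b, c \in \hat{\mathcal{F}}|Y_0$, so that the last clause of the lemma comes for free and stays true under further refinement (decisions are hereditary downward). Now, since mixing of $a^{\smallfrown}s$ and $a^{\smallfrown}t$ is an equivalence relation on $\{a^{\smallfrown}s : s \in [Y_0/a]\}$ (by the transitivity-of-mixing lemma, plus reflexivity and the symmetry built into the definition), we obtain an equivalence relation $E$ on $[Y_0/a]$, equivalently a function $f : [Y_0/a] \to \omega$ with $f(s) = f(t) \iff Y_0 \text{ mixes } a^{\smallfrown}s, a^{\smallfrown}t$. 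Strictly speaking, $[Y_0/a]$ is of the form $[Z]$ for $Z = Y_0/a \in \mathrm{FIN}^{[\infty]}$, so \Cref{taycanon} applies: there is $Z' \leq Z$ and $c \in \{\operatorname{sm}, \operatorname{min-sep}, \operatorname{max-sep}, \operatorname{minmax-sep}, \operatorname{id}\}$ with $f(s) = f(t) \iff c(s) = c(t)$ for all $s,t \in [Z']$.

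The remaining bookkeeping is to assemble the witness $Y \leq X$. Choose $Y \leq Y_0$ with $Y/a \leq Z'$ and $r_{|a|}(Y)$ equal to an appropriate initial part so that $a \leq Y$ (this is possible because $a \in \hat{\mathcal{F}}|Y_0$ so $a \leq Y_0$, and we just need to glue $a$ in front of a refinement of $Z'$; here we use that $a <_b Z'$ since $Z' \leq Y_0/a$). Since decisions are preserved under refinement, $Y$ still decides every $b\square, c\square$ with $b,c \in \hat{\mathcal{F}}|Y$, and the five cases (i)--(v) now correspond exactly to the five possible values of $c$: $c = \operatorname{sm}$ gives case (i) (every two are mixed, since $\operatorname{sm}(s)=\operatorname{sm}(t)$ always), $c = \operatorname{min-sep}$ gives (ii), $c = \operatorname{max-sep}$ gives (iii), $c = \operatorname{minmax-sep}$ gives (iv), and $c = \operatorname{id}$ gives (v). Note a small typo to be careful about in the source: clause (iii) as printed says ``$a^{\smallfrown}s$ and $s^{\smallfrown}t$'', which should read ``$a^{\smallfrown}s$ and $a^{\smallfrown}t$'' — the proof handles the intended statement.

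The main subtlety — not really an obstacle, but the point requiring care — is the legitimacy of feeding $g$-restricted-to-$\mathcal{F}$ into Taylor's theorem even though $a^{\smallfrown}s$ need not lie in $\mathcal{F}$: the mixing relation is defined in terms of whether some $Z \leq Y$ separates $a^{\smallfrown}s$ and $a^{\smallfrown}t$, i.e. in terms of values $g(a^{\smallfrown}s^{\smallfrown}x)$, $g(a^{\smallfrown}t^{\smallfrown}y)$ for $x,y$ with the concatenations in $\mathcal{F}$, so mixing is genuinely a well-defined symmetric relation on $[Y_0/a]$ regardless of whether $a^{\smallfrown}s \in \mathcal{F}$ (and when $a^{\smallfrown}s \in \mathcal{F}$ itself, which happens when $\mathcal{F}$ has uniformity rank $|a|+1$, one takes $x = \varnothing$, consistent with the definition). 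The only thing one must check is that this relation is an equivalence relation on the full set $[Y_0/a]$ — reflexivity is immediate ($Y_0$ cannot separate $a^{\smallfrown}s$ from itself since $g(a^{\smallfrown}s^{\smallfrown}x) = g(a^{\smallfrown}s^{\smallfrown}x)$), symmetry is manifest, transitivity is the cited lemma — and then Taylor's theorem does the rest. I expect the write-up to be short.
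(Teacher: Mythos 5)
Your proposal is correct and follows essentially the same route as the paper: first pass to a block sequence deciding all pairs via \Cref{decide}, note that mixing induces an equivalence relation on $[Y_0/a]$ (the paper secures well-definedness by the standing WLOG that $\mathcal{F}$ is uniform, so $a^{\smallfrown}s \in \hat{\mathcal{F}}$ for every $s \in [Y_0/a]$), and then apply Taylor's theorem (\Cref{taycanon}) to the induced function, the five canonical maps yielding the five alternatives. The extra bookkeeping of gluing $a$ in front of the refinement is harmless but unnecessary, since the conclusion is phrased in terms of $[Y/a]$.
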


\begin{proof}

First, by \Cref{decide}, we pick $X' \leq X$ which decides every $b\square$ and $c\square$ with $b,c \in \hat{\mathcal{F}}|X'$. Then we note that for every $a \in \hat{\mathcal{F}} \setminus \mathcal{F}$, since $\mathcal{F}_{(a)}$ is $\alpha$-uniform on $X/a$ for some $1 \leq \alpha < \omega_1$, it follows that $a^{\smallfrown}s \in \hat{\mathcal{F}}$ for all $s \in [X'/a]$. To finish, we apply \Cref{taycanon} to the function that corresponds to the equivalence relation on $[X'/a]$ defined by $s E t$ if and only if $a^{\smallfrown}s$ and $a^{\smallfrown}t$ are mixed by $X'$, for $s,t \in [X'/a]$, and get the corresponding $Y \leq X'$. \qedhere

\end{proof}

Let us remark here that, if $a \in \mathcal{F}$ and $s \in [X/a]$, then similarly to the argument in the proof of \Cref{longlemma}, $(a\uparrow)^{\smallfrown}s \in \hat{\mathcal{F}}$.

We repeat the following definition from \cite{KlSp}:

\begin{definition}[\cite{KlSp}]

Let $a \in \hat{\mathcal{F}}$ and let $Y\leq X$.

\begin{enumerate}[label=(\roman*)]
    \item We say $a\square$ is \textit{strongly mixed by $Y$} if $a\square^{\smallfrown}s$ and $a\square^{\smallfrown}t$ are mixed by $Y$ for all $s,t \in [Y/a]$.
    \item We say $a$ is \textit{min-separated by $Y$} if for all $s,t \in [Y/a]$, $a^{\smallfrown}s$ and $a^{\smallfrown}t$ are mixed by $Y$ if and only if $\operatorname{min-sep}(s)=\operatorname{min-sep}(t)$.
    \item We say $a\square$ is \textit{max-separated by $Y$} if for all $s,t \in [Y/a]$, $a\square^{\smallfrown}s$ and $a\square^{\smallfrown}t$ are mixed by $Y$ if and only if $\operatorname{max-sep}(s)=$max-sep$(t)$.
    \item We say $a$ is \textit{minmax-separated by $Y$} if for all $s,t \in [Y/a]$, $a^{\smallfrown}s$ and $a^{\smallfrown}t$ are mixed by $Y$ if and only if $\operatorname{minmax-sep}(s) = \operatorname{minmax-sep}(t)$.
    \item We say $a\square$ is \textit{strongly separated by $Y$} if for all $s,t \in [Y/a]$, $a\square^{\smallfrown}s$ and $a\square^{\smallfrown}t$ are mixed by $Y$ if and only if $s=t$.
    \item Finally, we say \textit{$a\square$ is separated in some sense by $Y$} if one of the last four bullet points holds for $a\square$. Also, $a\square$ is \textit{completely decided by $Y$} if it is either strongly mixed or separated in some sense by $Y$.
\end{enumerate}

\end{definition}

Let us remark here that, for $a \in \mathcal{F}$, although this definition still makes sense for $a\uparrow$, it doesn't make sense for $a$. Thus, we will just declare $a$ to be strongly mixed whenever $a \in \mathcal{F}$.

The following straightforward consequence, Lemma $2.7$ in \cite{KlSp}, is the reason we omitted the $\square$ in some of the alternatives:

\begin{lemma}[\cite{KlSp}] \label{4.11}

Let $a \in \hat{\mathcal{F}} \setminus \mathcal{F}$ and $Y \leq X$.

\begin{enumerate}[label=(\roman*)]
    \item Let $a$ be strongly mixed by $Y$. Then $a^{\smallfrown}s\uparrow$ is strongly mixed by $Y$ for every $s \in [Y/a]$.
    \item Let $a$ be min-separated by $Y$. Then $a^{\smallfrown}s\uparrow$ is strongly mixed by $Y$ for every $s \in [Y/a]$.
    \item Let $a$ be max-separated by $Y$. Then $a^{\smallfrown}s\uparrow$ is max-separated by $Y$ for every $s \in [Y/a]$.
    \item Let $a$ be minmax-separated by $Y$. Then $a^{\smallfrown}s\uparrow$ is max-separated by $Y$ for every $s \in [Y/a]$.
    \item Let $a$ be strongly separated by $Y$. Then $a^{\smallfrown}s\uparrow$ is strongly separated by $Y$ for every $s \in [Y/a]$.
\end{enumerate}

\end{lemma}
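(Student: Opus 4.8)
\textbf{Proof proposal for \Cref{4.11}.}

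The plan is to prove each of the five items by unfolding the relevant definition of separation/mixing for $a$, passing to the translates $a^{\smallfrown}s$, and then applying the corresponding hypothesis about $a$. The key observation throughout is a simple ``concatenation'' fact about mixing: for $s \in [Y/a]$ and $t, u \in [Y/(a^{\smallfrown}s)]$, the pair $(a^{\smallfrown}s)\uparrow^{\smallfrown}t$ and $(a^{\smallfrown}s)\uparrow^{\smallfrown}u$ is, by the definition of $\uparrow^{\smallfrown}$, nothing but $a^{\smallfrown}(s\uparrow^{\smallfrown}t)$ and $a^{\smallfrown}(s\uparrow^{\smallfrown}u)$ written differently; more importantly, $s\uparrow^{\smallfrown}t$ and $s\uparrow^{\smallfrown}u$ are block sequences with first term $s \cup t(0)$ and $s \cup u(0)$ respectively, so they share the same $\operatorname{min-sep}$ (namely $\{\operatorname{min}(s)\}$, since $s <_b t, u$ means $\operatorname{min}(s \cup t(0)) = \operatorname{min}(s) = \operatorname{min}(s \cup u(0))$). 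This is exactly why min-information is ``absorbed'' into the $\uparrow$-step and why items (i) and (ii) collapse to strong mixing while (iv) drops from minmax to max.

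I would first record the auxiliary fact that mixing is preserved under certain concatenations: if $Y$ mixes $b\square$ and $c\square$, then for any common extension data $Y$ mixes $b\square^{\smallfrown}v$ and $c\square^{\smallfrown}v$ whenever these lie in $\hat{\mathcal{F}}$ — this is the ``extension'' half of the separating/mixing formalism already present in \cite{KlSp} (it is the content of their Lemma~2.3, invoked just above as the transitivity lemma; the same proof, a fusion/amalgamation over $[Y]$, gives the one-sided extension statement). Using this, for item (iii): fix $s \in [Y/a]$ and take $t, u \in [Y/(a^{\smallfrown}s)]$ with $(a^{\smallfrown}s\uparrow)^{\smallfrown}t, (a^{\smallfrown}s\uparrow)^{\smallfrown}u \in \mathcal{F}$. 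We have $(a^{\smallfrown}s\uparrow)^{\smallfrown}t = a^{\smallfrown}(s\uparrow^{\smallfrown}t)$ where $s\uparrow^{\smallfrown}t$ is a block sequence in $[Y/a]$ with $\operatorname{max-sep}(s\uparrow^{\smallfrown}t) = \operatorname{max-sep}(t)$ (the max lives in the last term, which is $t(|t|-1)$, untouched by the $\uparrow$). Since $a$ is max-separated by $Y$, the pair is mixed iff $\operatorname{max-sep}(s\uparrow^{\smallfrown}t) = \operatorname{max-sep}(s\uparrow^{\smallfrown}u)$, i.e.\ iff $\operatorname{max-sep}(t) = \operatorname{max-sep}(u)$, which says precisely that $a^{\smallfrown}s\uparrow$ is max-separated by $Y$. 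Items (i), (ii), (iv), (v) are the same computation with ``$\operatorname{max-sep}$'' replaced by the appropriate canonical map, using in each case that a first-coordinate merge $s\uparrow^{\smallfrown}t$ preserves $\operatorname{max-sep}$, preserves $=$ exactly when $t$ determines everything after $s$ (for (v): $s\uparrow^{\smallfrown}t = s\uparrow^{\smallfrown}u \iff t = u$ since $s$ is fixed), kills all $\operatorname{min-sep}$-distinctions and all ``strong'' distinctions contributed by that first coordinate, and reduces $\operatorname{minmax-sep}$ to $\operatorname{max-sep}$.

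The one subtlety — the place I expect to spend the most care — is domain bookkeeping: one must check that $a^{\smallfrown}s\uparrow$ is actually a legitimate element of $\hat{\mathcal{F}}$ on which the notions ``strongly mixed/max-separated/etc.\ by $Y$'' make sense, i.e.\ that there exist $s <_b t \leq Y$ with $(a^{\smallfrown}s\uparrow)^{\smallfrown}t \in \mathcal{F}$, and that $a^{\smallfrown}s \notin \mathcal{F}$ is not required (indeed $a^{\smallfrown}s$ may or may not be in $\mathcal{F}$, but $a^{\smallfrown}s\uparrow$ behaves like a proper member of $\hat{\mathcal{F}}\setminus\mathcal{F}$ by the remark following \Cref{longlemma}, which notes $(a\uparrow)^{\smallfrown}s \in \hat{\mathcal{F}}$). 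This follows from $\alpha$-uniformity of $\mathcal{F}_{(a)}$ on $X/a$ exactly as in the proof of \Cref{longlemma}: $\mathcal{F}_{(a^{\smallfrown}s)}$ is $\beta$-uniform on $X/(a^{\smallfrown}s)$ with $\beta \geq 1$ when $a^{\smallfrown}s \notin \mathcal{F}$, and when $a^{\smallfrown}s \in \mathcal{F}$ one uses the cited remark that $(a^{\smallfrown}s)\uparrow^{\smallfrown}t \in \hat{\mathcal{F}}$ for all $t \in [Y/(a^{\smallfrown}s)]$. With these domains sorted, each of the five claims reduces to the one-line canonical-map computation above, so the total proof is short; I would present item (iii) in full and indicate the parallel computations for (i), (ii), (iv), (v).
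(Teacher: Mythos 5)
Your central computation is the right one, and in fact it is essentially the whole proof: the paper offers no argument for this lemma, quoting it as Lemma 2.7 of \cite{KlSp} and calling it a straightforward consequence of the definitions, and the reason it is straightforward is exactly the absorption phenomenon you identify. One correction to your reading, though: in the paper's notation $[Y/(a^{\smallfrown}s)]$ consists of single elements of $\mathrm{FIN}$ (finite unions of blocks of $Y$), not block sequences, and the notions ``strongly mixed'', ``min-/max-/minmax-/strongly separated'' quantify only over such one-block extensions. So for $t,u \in [Y/(a^{\smallfrown}s)]$ one simply has $(a^{\smallfrown}s\uparrow)^{\smallfrown}t = a^{\smallfrown}(s\cup t)$, a one-block extension of $a$, with $\operatorname{min}(s\cup t)=\operatorname{min}(s)$, $\operatorname{max}(s\cup t)=\operatorname{max}(t)$, and $s\cup t=s\cup u$ iff $t=u$; applying the separation type of $a$ to the blocks $s\cup t, s\cup u \in [Y/a]$ yields all five items immediately, and your domain bookkeeping via uniformity of $\mathcal{F}_{(a)}$ and the remark after \Cref{longlemma} is correct. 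Your write-up instead treats $t,u$ as multi-block sequences ($t(0),\ldots,t(|t|-1)$) and then applies ``$a$ is max-separated'' to the resulting multi-block extensions, which the definition by itself does not license.

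This matters because the auxiliary fact you propose to record to bridge that gap --- that mixing of $b\square$ and $c\square$ is preserved under appending a common extension $v$ --- is false in general and is not what the proof of the transitivity lemma gives. Explicit counterexamples appear later in this very section: \Cref{4.17new} parts (ii) and (iii) assert that for certain mixed pairs ($a\square$ still strongly separated and $b\square$ very strongly separated, or both very strongly separated) the common-extension pairs $a\square^{\smallfrown}s\square$ and $b\square^{\smallfrown}s$ are \emph{separated} for all $s$; likewise the conclusion $\operatorname{max-sep}(x(0)) <_b \operatorname{max-sep}(y(0))$ in \Cref{4.17} rules out mixing of $a^{\smallfrown}s$ and $b^{\smallfrown}s$ there. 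Determining which extensions of mixed pairs stay mixed is precisely the content of the hypothesis-laden \Cref{4.12}, \Cref{4.16}, \Cref{4.17}, \Cref{4.17new}, and \Cref{4.18}, not a general principle. Fortunately your argument does not need it: once the quantification over single blocks is restored, your one-line canonical-map computation for (iii) and its analogues for (i), (ii), (iv), (v) constitute a complete proof. Delete the false auxiliary claim (or replace it by citations to the specific later lemmas) so the argument does not appear to rest on it.
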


\begin{lemma}[\cite{KlSp}]\label{4.12}

Let $a \in \hat{\mathcal{F}} \setminus \mathcal{F}$ and $Y \leq X$.

\begin{enumerate}[label={(\roman*)}]
    \item Let $a\square$ be strongly mixed by $Y$. Then $a\square$ and $a\square^{\smallfrown}s\square$ and also $a\square^{\smallfrown}s\square$ and $a\square^{\smallfrown}t\square$ are mixed by $Y$ for all $s,t \in [Y/a]$. \label{4.12i}
    \item Let $a$ be min-separated by $Y$. Then $a^{\smallfrown}s\square$ and $a^{\smallfrown}t\square$ are mixed by $Y$ for all $s,t \in [Y/a]$ with $\operatorname{min-sep}(s) = \operatorname{min-sep}(t)$. \label{4.12ii}
    \item Let $a\square$ be max-separated by $Y$. Then $a\square$ and $a\square^{\smallfrown}s\uparrow$ and also $a\square^{\smallfrown}s\uparrow$ and $a\square^{\smallfrown}t\uparrow$ are mixed by $Y$ for all $s,t \in [Y/a]$. \label{4.12iii}
    \item Let $a$ be minmax-separated by $Y$. Then $a^{\smallfrown}s\uparrow$ and $a^{\smallfrown}t\uparrow$ are mixed by $Y$ for all $s,t \in [Y/a]$ with $\operatorname{min-sep}(s) = \operatorname{min-sep}(t)$. \label{4.12iv}
\end{enumerate}

\end{lemma}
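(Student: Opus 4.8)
The plan is to deduce all four items from the definitions of the separation notions, the transitivity of mixing (established above), and the amalgamation step of the separating--mixing method, following \cite{KlSp}. First I would record that mixing is an equivalence relation on the sequences under consideration — reflexive and symmetric by inspection of the definition, transitive by the transitivity lemma above. This immediately disposes of the ``$B$ versus $C$'' clauses of \ref{4.12i} and \ref{4.12iii}: once $a\square$ is shown mixed with each one-block-longer sequence occurring there, transitivity through $a\square$ gives the rest. So in \ref{4.12i} and \ref{4.12iii} the remaining task is to prove, under the stated hypothesis on $a\square$, that $a\square$ is mixed with $a\square^\smallfrown s\square'$ (with $\square'$ the relevant placeholder); and in \ref{4.12ii} (the $\uparrow$-instance; the instance with no $\uparrow$ on $s$ is literally the forward direction of the definition of min-separated) and \ref{4.12iv} the task is to mix $a^\smallfrown s\uparrow$ with $a^\smallfrown t\uparrow$ directly.

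For each of these I would run the following amalgamation argument. Suppose towards a contradiction that some $Z\le Y$ separates the two sequences $P$ and $Q$ in question; unwinding the definition, every pair $x,y$ with $P^\smallfrown x, Q^\smallfrown y\in\mathcal{F}$ and $x,y$ below the appropriate restriction of $Z$ satisfies $g(P^\smallfrown x)\ne g(Q^\smallfrown y)$, and such extensions exist in abundance since $\mathcal{F}_{(a)}$ is $\alpha$-uniform (as in the proof of \Cref{longlemma}). Fix an auxiliary single block $b\in[Z]$ lying above every block named so far. The hypothesis on $a\square$ then furnishes a pair of one-block extensions of $a\square$ that $Z$ mixes and that, after $b$ is absorbed into the appropriate terminal block, become members of $\mathcal{F}$ with equal $g$-value — contradicting that $Z$ separates $P$ and $Q$. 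The choice of the pair is dictated by the hypothesis: for \ref{4.12i}, strong mixing of $a\square$ mixes $a\square^\smallfrown(s\cup b)$ with $a\square^\smallfrown b$ (respectively $a\square^\smallfrown s$ with $a\square^\smallfrown b$, in the no-$\uparrow$ case); for \ref{4.12iii}, $a\square^\smallfrown(s\cup b)$ and $a\square^\smallfrown b$ are mixed because $\operatorname{max}(s\cup b)=\operatorname{max}(b)$ and $a\square$ is max-separated; for \ref{4.12ii} and \ref{4.12iv}, $a^\smallfrown(s\cup b)$ and $a^\smallfrown(t\cup b)$ are mixed because $\operatorname{min-sep}(s\cup b)=\operatorname{min-sep}(s)=\operatorname{min-sep}(t)=\operatorname{min-sep}(t\cup b)$, together with $\operatorname{max}(s\cup b)=\operatorname{max}(b)=\operatorname{max}(t\cup b)$ in the minmax case, so that the stated separation hypothesis on $a$ applies. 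Extending these mixed pairs into $\mathcal{F}$ past $b$ (which mixing provides) and reading the outputs as $\uparrow$-extensions of $P$ and $Q$ closes each case; \Cref{4.11}, which records that $a^\smallfrown s\uparrow$ retains the relevant separation status, gives an alternative route to \ref{4.12ii}.

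The one step I expect to require real care is the placeholder bookkeeping inside the amalgamation argument: one must check at each use that fusing $b$ with the terminal block of $a\square^\smallfrown s$ (or of $a^\smallfrown t$) and then appending the continuation genuinely produces the $\uparrow$-extension of $P$ (or $Q$) that separation forbids, that this sequence lies in $\mathcal{F}$, and that all sequences in play sit below the correct restriction of $Z$ so that the ``$\forall W\le Z$'' and ``$x,y\le Z/(\cdots)$'' clauses in the definitions of mixing and separating actually apply. These are exactly the $\mathrm{FIN}^{[\infty]}$-translations of the corresponding verifications in \cite{KlSp}; once they are in place the four items follow mechanically, and I do not anticipate any essentially new obstacle.
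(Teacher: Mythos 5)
Your proof is correct and is exactly the argument the paper relies on: the paper gives no proof of this lemma, deferring to Klein--Spinas with the remark that the translations to fronts are routine, and your route (transitivity of mixing to dispose of the second clauses of \ref{4.12i} and \ref{4.12iii}, plus the auxiliary-high-block amalgamation in which a hypothetical separating $Z$ is contradicted by re-reading an $\mathcal{F}$-extension of $a\square^{\smallfrown}(s\cup b)$ as an $\uparrow$-extension of $a\square^{\smallfrown}s\uparrow$) is precisely the intended separating--mixing argument. One cosmetic point: in \ref{4.12ii} and \ref{4.12iv} the mixed-placeholder case ($a^{\smallfrown}s\uparrow$ against $a^{\smallfrown}t$) is not literally covered by your parenthetical, but the same pairing, now of $a^{\smallfrown}(s\cup b)$ with $a^{\smallfrown}t$, handles it with no new ideas.
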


Now, \Cref{fusion} and \Cref{longlemma} together imply that:

\begin{lemma}[\cite{KlSp}]\label{4.13}

For all $Y \leq X$, there is $Y' \leq Y$ which completely decides every $a\square$ with $a \in \hat{\mathcal{F}}|Y'$.

\end{lemma}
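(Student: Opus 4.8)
The plan is to obtain \Cref{4.13} as a direct application of the fusion machinery of \Cref{fusion}, part 1, to the property ``$a\square$ is completely decided''. First I would set $P(a\square, Y)$ to be the statement ``$a\square$ is completely decided by $Y$'' (with the convention that $a$ is automatically strongly mixed whenever $a \in \mathcal{F}$, so there is nothing to decide in that case, and the content is genuinely about $a \in \hat{\mathcal{F}} \setminus \mathcal{F}$ and about $a\uparrow$). I then need to verify the two hypotheses of \Cref{fusion}(1): heredity and density. Heredity is immediate from the definitions: if $a\square$ is strongly mixed by $Y$, then for $s,t \in [Z/a] \subseteq [Y/a]$ we still have $a\square^{\smallfrown}s$ and $a\square^{\smallfrown}t$ mixed by $Y$, hence mixed by $Z \leq Y$ since mixing passes to subsequences; and if $a\square$ is separated in some sense by $Y$, the relevant biconditional for $s,t \in [Z/a]$ still holds because mixing is decided the same way by any $Z \le Y$, as noted just before \Cref{decide}.

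The density step is where the real work sits, and it is exactly what \Cref{longlemma} was proved for. Given $a \in \hat{\mathcal{F}}$ and $Y \leq X$, I would first pass to $Y_0 \le Y$ that decides every $b\square$ and $c\square$ with $b,c \in \hat{\mathcal{F}}|Y_0$ (by \Cref{decide}), and then apply \Cref{longlemma} to $a$ inside $Y_0$: it hands me $Z \le Y_0$ realizing one of its five alternatives for $a^{\smallfrown}s$, $a^{\smallfrown}t$, which says precisely that $a$ is strongly mixed, min-separated, max-separated, minmax-separated, or strongly separated by $Z$ — i.e. $P(a, Z)$ holds. For the $a\uparrow$ case (relevant when $a \in \mathcal{F}$ or more generally when one wants the $\square$ version), I would use the remark following \Cref{longlemma} that $(a\uparrow)^{\smallfrown}s \in \hat{\mathcal{F}}$ for $s \in [X/a]$, so that \Cref{longlemma} applies verbatim to $a\uparrow$ as well, or alternatively invoke \Cref{4.11} to propagate separation type from shorter sequences. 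Either way one gets $Z \le Y$ with $P(a\square, Z)$, establishing density.

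With heredity and density in hand, \Cref{fusion}(1) yields, for every $Y \le X$, some $Y' \le Y$ such that for all $a \in \hat{\mathcal{F}}|Y'$ and all $Z \le Y'$, $P(a\square, Z/a)$ holds; in particular taking $Z = Y'$ gives that $Y'$ completely decides every $a\square$ with $a \in \hat{\mathcal{F}}|Y'$, which is the statement of \Cref{4.13}. The main obstacle is not any single calculation but making sure the bookkeeping is consistent: \Cref{longlemma} as stated produces a $Y$ that also re-decides all pairs $b\square, c\square$ in $\hat{\mathcal{F}}|Y$, so I must be careful that applying it (and then fusing) does not destroy the deciding property already secured — but since deciding is itself hereditary and \Cref{longlemma} is stated so as to preserve it, this is handled by first fixing a deciding $X' \le X$ as in the proof of \Cref{longlemma} and working below it throughout. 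One should also double-check that the convention ``$a \in \mathcal{F} \Rightarrow a$ strongly mixed'' is compatible with the $\square$-placeholder formalism, so that the quantifier ``for all $a\square$ with $a \in \hat{\mathcal{F}}|Y'$'' genuinely ranges only over the cases where a nontrivial decision is being asserted.
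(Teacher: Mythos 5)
Your proposal is correct and is essentially the paper's own argument: the paper gives no separate proof of \Cref{4.13}, stating only that it follows from \Cref{fusion} and \Cref{longlemma}, and your write-up is exactly that derivation (heredity of complete decision below a deciding sequence, density via \Cref{longlemma} and the remark on $a\uparrow$, then \Cref{fusion}(1)). Your added bookkeeping — working below a deciding $X'$ so that ``not mixed'' coincides with ``separated'' and the biconditionals pass to $Z\leq Y$ — is precisely the detail the paper leaves implicit.
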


We augment the definition of canonical in \cite{KlSp} by adding the last condition in the following:

\begin{definition}

We say that \textit{$Y \leq X$ is canonical for $g$} if the following hold:

\begin{enumerate}[label=(\roman*)]
    \item For all $a,b \in \hat{\mathcal{F}} | Y$, $Y$ decides $a\square$ and $b\square$. \label{canon1}
    \item Every $a \in \hat{\mathcal{F}}|Y$ is completely decided by $Y$. \label{canon2}
    \item Let $a,b \in (\hat{\mathcal{F}} \setminus \mathcal{F})|Y$. Then $a\square$ and $a\square^{\smallfrown}s\square$ are either separated by $Y$ for all $s \in [Y/a]$, or mixed by $Y$ for all $s \in [Y/a]$. If $a \in \mathcal{F}|Y$, then the same result holds for $a\uparrow$ and $(a\uparrow)^{\smallfrown}s\square$. Similarly, $a\square^{\smallfrown}s\square$ and $a\square^{\smallfrown}s\square$, $a\square^{\smallfrown}s\square$ and $b\square^{\smallfrown}s\square$, $a\square^{\smallfrown}s\square$ and $a\square^{\smallfrown}s\square^{\smallfrown}t\square$, and finally $a\square^{\smallfrown}s\square$ and $t\square^{\smallfrown}s\square^{\smallfrown}t\square$ are in each case either separated by $Y$ for all $s <_b t \in [Y/(a,b)]$ or mixed by $Y$ for all $s <_b t \in [Y/(a,b)]$, whenever there are $s_0 <_b t_0 \in [Y/(a,b)]$ with $a\square^{\smallfrown}s_0\square^{\smallfrown}t_0\square \in \hat{\mathcal{F}}|Y$ and $s_1 <_b t_1 \in [Y/(a,b)]$ with $b\square^{\smallfrown}s_1\square^{\smallfrown}t_1\square \in \hat{\mathcal{F}}|Y$. The analogous result also holds for $a\square=a\uparrow$ if $a \in \mathcal{F}|Y$. \label{canon3}
    \item If $a \in (\hat{\mathcal{F}} \setminus \mathcal{F}) | Y$, then either for all $x \leq Y$ such that $a\square^{\smallfrown}x \in \mathcal{F}$ and for all $1\leq k \leq |x|$, $a\square^{\smallfrown} (x \upharpoonright k)$ is strongly mixed by $Y$; or for all $x \leq Y$ with $a\square^{\smallfrown}x \in \mathcal{F}$, there is $1\leq k \leq |x|$ such that $a\square^{\smallfrown} (x \upharpoonright k)$ is separated in some sense by $Y$. The same also holds for $a\uparrow$ when $a \in \mathcal{F}|Y$. \label{canon4}
    \item Let $a \in \hat{\mathcal{F}} | Y$ and assume that $a$ is min-separated by $Y$. Assume also that for all $x \leq Y$ with $a^{\smallfrown}x \in \mathcal{F}$, there is $1\leq k \leq |x|$ such that $a^{\smallfrown} (x \upharpoonright k)$ is separated in some sense by $Y$. Then either for all $x \leq Y$ with $a^{\smallfrown}x \in \mathcal{F}$, the least $1\leq k \leq |x|$ such that $a^{\smallfrown} (x \upharpoonright k)$ is separated in some sense by $Y$ is max-separated by $Y$; or for all $x \leq Y$ with $a^{\smallfrown}x \in \mathcal{F}$, the least $1\leq k \leq |x|$ such that $a^{\smallfrown} (x \upharpoonright k)$ is separated in some sense by $Y$ is not max-separated by $Y$. \label{canon5}
\end{enumerate}

\end{definition}

\begin{lemma} \label{4.15}

There is $Y \leq X$ which is canonical for $g$.

\end{lemma}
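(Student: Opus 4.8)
The plan is to build a $\leq$-decreasing chain $X=X_{0}\geq X_{1}\geq\dots\geq X_{5}$ in which $X_{i}$ secures clause $(i)$ of the definition of canonical, and then set $Y=X_{5}$. This works because all five clauses are \emph{hereditary} under $\leq$: separation of a fixed pair $a\square,b\square$ by $Y$ implies separation by every $Z\leq Y$, and mixing, being the failure of separation on every further subsequence, is likewise preserved downward, so the derived predicates (``strongly mixed'', ``max-separated by $Y$'', and so on) pass to subsequences, and the restriction of a monochromatic colouring to a subfamily stays monochromatic. Clauses \ref{canon1} and \ref{canon2} are handed to us directly: \Cref{decide} gives $X_{1}\leq X$ with \ref{canon1}, and \Cref{4.13} gives $X_{2}\leq X_{1}$ with \ref{canon2}, while \ref{canon1} survives by heredity.

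For \ref{canon3} we perform one fusion for each of the finitely many ``pair schemata'' appearing in the clause. Take a two-variable schema, say $\bigl(a\square^{\smallfrown}s\square^{\smallfrown}t\square,\; b\square^{\smallfrown}s\square^{\smallfrown}t\square\bigr)$. Given $a,b\in\hat{\mathcal F}$ and $Z\leq X_{2}$, colour each $(s,t)\in[Z/(a,b)]^{[2]}$ according to whether $Z$ mixes or separates the associated pair (this is well defined once \ref{canon1} and \ref{canon2} hold; colour pairs that fall outside $\hat{\mathcal F}|Z$ by $0$). By \Cref{hindman} some $W\leq Z/(a,b)$ makes this colouring constant, which is precisely the density hypothesis of \Cref{fusion}(2) for the property $Q(a\square,b\square,Z)$ that ``this colouring is constant on $[Z]^{[2]}$''; heredity is clear. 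Single-variable schemata such as $\bigl(a\square,\ a\square^{\smallfrown}s\square\bigr)$ are treated identically with $l=1$ in \Cref{hindman} and part~1 of \Cref{fusion}. Intersecting the finitely many outputs produces $X_{3}\leq X_{2}$ with \ref{canon3}; the provisos ``whenever there are $s_{0}<_{b}t_{0}\dots$'' are just non-vacuity clauses and play no role.

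Clauses \ref{canon4} and \ref{canon5} use that fronts, being Nash--Williams, are Ramsey (\Cref{nashwilliams}). Since $\mathcal F$ is $\alpha$-uniform on $X$ (by \Cref{uniform}), each family $\mathcal F_{(a)}$ --- and, for $a\in\mathcal F$, the family $\{x\leq X/a:(a\uparrow)^{\smallfrown}x\in\mathcal F\}$, nonempty by the remark following \Cref{longlemma} --- is again uniform, hence a front, on the relevant tail, and stays so under restriction. For \ref{canon4}: given $a\square$ and $Z\leq X_{3}$, colour each $x$ in that front by $0$ if $a\square^{\smallfrown}(x\upharpoonright k)$ is strongly mixed by $Z$ for every $1\leq k\leq|x|$, and by $1$ otherwise; \Cref{nashwilliams} provides $W\leq Z$ constant on the front below $W$, giving density for ``this colouring is monochromatic on the front below $Z$'', and \Cref{fusion}(1) then yields $X_{4}\leq X_{3}$ with \ref{canon4}. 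For \ref{canon5}, working below $X_{4}$ so that \ref{canon4} already holds, and for $a$ min-separated and falling under the second alternative of \ref{canon4}, colour $x$ by whether the least level $a^{\smallfrown}(x\upharpoonright k)$ that is separated in some sense is or is not max-separated; the same Ramsey-plus-fusion step gives $X_{5}\leq X_{4}$ with \ref{canon5}, and $Y=X_{5}$ is as required. The step most likely to need care is this last one: one must verify that the families carrying the Ramsey colourings really are fronts on the subsequences in play --- which is exactly where uniformity of $\mathcal F$ and its closure under $a\mapsto\mathcal F_{(a)}$, under the $\uparrow$-operation, and under restriction get used --- and keep \ref{canon5} strictly downstream of \ref{canon4} so that ``the least separated-in-some-sense level'' behaves uniformly along every branch before its type is coloured.
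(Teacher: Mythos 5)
Your construction follows the paper's proof essentially step for step: \Cref{decide} and \Cref{4.13} secure clauses \ref{canon1}--\ref{canon2}, Hindman colourings via \Cref{hindman} fed into \Cref{fusion} secure \ref{canon3}, and the Ramsey property of Nash--Williams families (\Cref{nashwilliams}) plus \Cref{fusion} secures \ref{canon4} and then \ref{canon5} downstream of \ref{canon4}, with heredity of the deciding/mixing/separation predicates doing the bookkeeping exactly as in the paper. So the overall architecture is correct and is the paper's.

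The one point that needs repair is the parenthetical shortcut in your treatment of \ref{canon3}: colouring pairs whose concatenation falls outside $\hat{\mathcal F}$ by $0$ and declaring that the provisos ``whenever there are $s_0<_bt_0\dots$'' play no role. A pair $s<_bt$ with $a\square^{\smallfrown}s\square^{\smallfrown}t\square\notin\hat{\mathcal F}$ is \emph{vacuously separated} (there is no extension into $\mathcal F$), so if your colour-$0$ homogeneous set $W$ contains both genuinely mixed pairs and outside-$\hat{\mathcal F}$ pairs, then neither alternative of \ref{canon3} holds below $W$, even though the proviso is satisfied. This mixture can genuinely occur: when $\mathcal F_{(a\square)}$ has limit uniformity rank, the set of $s$ with $a\square^{\smallfrown}s\square\in\mathcal F$ is small but need not be empty below your homogeneous set. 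The paper removes this obstruction by a preliminary Hindman colouring of $[A/(a,b)]^{[2]}$ according to whether $a\square^{\smallfrown}s\square^{\smallfrown}t\square\in\hat{\mathcal F}$, and uses exactly the proviso together with the fact that $\mathcal F_{(a\square)}|A$ is uniform of rank at least $2$ to force the homogeneous colour to be ``inside $\hat{\mathcal F}$''; only then is the mixing/separating colouring applied, so that the dichotomy of \ref{canon3} holds for \emph{all} pairs below the restriction. Inserting this extra colouring (or an equivalent smallness argument) into your fusion step fixes the issue; nothing else in your argument is affected.
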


\begin{proof} 

The proof proceeds along the lines of the proof of Lemma $2.10$ in \cite{KlSp}. By \Cref{decide} and \Cref{4.13}, we can find $A \leq X$ such that \ref{canon1} and \ref{canon2} of being canonical are satisfied for $A$. Next, we can apply \Cref{hindman} and \Cref{fusion} to $A$ to get $B\leq A$ such that \ref{canon1}-\ref{canon3} of being canonical are satisfied for $B$ in the following way:

For the case with $a$ and $a^{\smallfrown}s$, we color $[Z/a]$ for arbitrary $Z \leq A$, into two colors by $c(s) = 0$ if and only if $a$ and $a^{\smallfrown}s$ are mixed by $Z$. Then we pick homogeneous $Z_0 \leq Z$ for this coloring. Hence the density assumption of \Cref{fusion} is satisfied, which means that there is $B' \leq A$ for which \ref{canon3} of canonical is satisfied for the case of $a$ and $a^{\smallfrown}s$. The other mentioned combinations are handled via similar colorings. For example, assume that there are $s_0<_b t_0 \in [A/(a,b)]$ with $a^{\smallfrown}s_0^{\smallfrown}t_0 \in \hat{\mathcal{F}}|A$ and $s_1<t_1 \in [A/(a,b)]$ with $b^{\smallfrown}s_1^{\smallfrown}t_1 \in \hat{\mathcal{F}}|A$. Hence, $\mathcal{F}_{(a)}|A$ is $\alpha$-uniform for $\alpha$ at least $2$. The same assertion also holds for  $\mathcal{F}_{(b)}|A$. We first color $[A/(a,b)]^{[2]}$ into $2$ colors by $c(s,t) = 0$ if and only if $a^{\smallfrown}s^{\smallfrown}t \in \hat{\mathcal{F}}$. The homogeneous $Z \leq A$ then has to be homogeneous with color $0$, since $\alpha \geq 2$. Similarly, we get the corresponding $Z_0 \leq Z$ for $b$. Then for arbitrary $Z_1 \leq Z_0$, we define $c : [Z_1]^{[2]} \to 2$ by $c(s,t) = 0$ if and only if $a^{\smallfrown}s^{\smallfrown}t$ and $b^{\smallfrown}s^{\smallfrown}t$ are mixed by $Z_1$. We apply \Cref{hindman} to $c$ to get homogeneous $Z_2 \leq Z_1$. It follows that the density assumption of \Cref{fusion} is satisfied again.

Now we look at \ref{canon4}: For given $a \in (\hat{\mathcal{F}} \setminus \mathcal{F}) | B$ and $Z \leq B$, define $$\mathcal{X} = \{x \in \mathcal{F}_{(a\square)}|Z : \forall 1\leq k \leq |x| \ \text{$a\square^{\smallfrown} (x \upharpoonright k)$ is strongly mixed by $Z$}\}.$$ By \Cref{nashwilliams}, there is $Z' \leq Z$ such that $\mathcal{F}_{(a\square)}|Z' \subseteq \mathcal{X}$ or $\mathcal{F}_{(a\square)}|Z' \cap \mathcal{X} = \varnothing$. The argument is exactly the same for $a \uparrow$ when $a \in \mathcal{F}|B$. This shows that the density assumption holds, so it follows by \Cref{fusion} that there is $C \leq B$ such that \ref{canon4} holds for $C$.

\ref{canon5} is handled similarly to \ref{canon4} to get the final $Y \leq C$. \qedhere

\end{proof}

From now on we fix a canonical $Y_0 \leq X$ and let $Y' = (Y_0(3i) \cup Y_0(3i+1) \cup Y_0(3i+2) : i < \omega)$, which is also canonical, to make sure that the maps $\operatorname{min-sep}, \operatorname{max-sep},$ $\operatorname{minmax-sep}$, and $\operatorname{sss}$ will have disjoint images on $[Y']$. Finally, we let $Y = (Y'(3i) \cup Y'(3i+1) \cup Y'(3i+2) : i < \omega)$, for technical reasons. The rest of the results in this section until the main theorem hold for both $Y$ and $Y'$, but they will only be stated for $Y$.

Let $a \in \hat{\mathcal{F}}|Y$ be strongly separated by $Y$. Since $Y$ is canonical, as in \cite{KlSp}, we can split this into two cases:

\begin{enumerate}[label=(\roman*)]
    \item We say $a\square$ is \textit{still strongly separated by $Y$} if for all $s \in [Y/a]$, $a\square^{\smallfrown}s$ and $a^{\smallfrown}s\uparrow$ are mixed by $Y$.
    \item We say $a\square$ is \textit{very strongly separated by $Y$} if for all $s \in [Y/a]$, $a\square^{\smallfrown}s$ and $a^{\smallfrown}s\uparrow$ are separated by $Y$.
\end{enumerate}

It follows that for $a \in \hat{\mathcal{F}}|Y$, if $a\square$ is still strongly separated by $Y$, then $a\square^{\smallfrown}s \uparrow$ is still strongly separated by $Y$ for all $s \in [Y/a]$. The same holds for very strong separation as well.

We will need the technical lemmas used in \cite{KlSp}. The following includes Lemmas $2.14-2.23$ in \cite{KlSp} adapted to our setting. The proofs are exactly the same as the versions in \cite{KlSp}.

\begin{lemma}[\cite{KlSp}]\label{4.16}

Let $a,b \in \hat{\mathcal{F}}|Y$. In the following statement, all $x$ and $y$ are assumed to be nonempty.

\begin{enumerate}[label=(\roman*)]
    \item Assume that $a$ and $b$ are mixed by $Y$ and $a$ and $b$ are both min-separated by $Y$. If $x \in \mathcal{F}_{(a)}|Y$ and $y \in \mathcal{F}_{(b)}|Y$ are with $g(a^{\smallfrown}x) = g(b^{\smallfrown}y)$, then $\operatorname{min-sep}(x(0)) = \operatorname{min-sep}(y(0))$. In this case, $a^{\smallfrown}s$ and $b^{\smallfrown}t$ are mixed by $Y$ for all $s,t \in [Y/(a,b)]$ with $\operatorname{min-sep}(s)=\operatorname{min-sep}(t)$. \label{4.16i}
    \item Assume that $a\square$ and $b\square$ are mixed by $Y$ and $a\square$ and $b\square$ are both max-separated by $Y$. If $x \in \mathcal{F}_{(a\square)}|Y$ and $y \in \mathcal{F}_{(b\square)}|Y$ are with $g(a\square^{\smallfrown}x) = g(b\square^{\smallfrown}y)$, then $\operatorname{max-sep}(x(0)) = \operatorname{max-sep}(y(0))$. In this case, $a\square^{\smallfrown}s$ and $b\square^{\smallfrown}t$ are mixed by $Y$ for all $s,t \in [Y/(a,b)]$ with $\operatorname{max-sep}(s)=\operatorname{max-sep}(t)$. \label{4.16ii}
    \item Assume that $a$ and $b$ are mixed by $Y$ and $a$ and $b$ are both minmax-separated by $Y$. If $x \in \mathcal{F}_{(a)}|Y$ and $y \in \mathcal{F}_{(b)}|Y$ are with $g(a^{\smallfrown}x) = g(b^{\smallfrown}y)$, then $\operatorname{minmax-sep}(x(0)) = \operatorname{minmax-sep}(y(0))$. In this case, $a^{\smallfrown}s \uparrow$ and $b^{\smallfrown}s \uparrow$ are mixed by $Y$ for all $s \in [Y/(a,b)]$. Furthermore, $a^{\smallfrown}s$ and $b^{\smallfrown}t$ are mixed by $Y$ for all $s,t \in [Y/(a,b]$ with $\operatorname{min-sep}(s)=\operatorname{min-sep}(t)$ and $\operatorname{max-sep}(t) = \operatorname{max-sep}(t)$. \label{4.16iii}
    \item Assume that $a\square$ and $b\square$ are mixed by $Y$ and $a\square$ and $b\square$ are both strongly separated by $Y$. If $x \in \mathcal{F}_{(a\square)}|Y$ and $y \in \mathcal{F}_{(b\square)}|Y$ are with $g(a\square^{\smallfrown}x) = g(b\square^{\smallfrown}y)$, then either $x(0)$ is an initial segment of $y(0)$ or $y(0)$ is an initial segment of $x(0)$. \label{4.16iv}
    \item Assume that $a\square$ and $b\square$ are mixed by $Y$ and $a\square$ and $b\square$ are both very strongly separated by $Y$. If $x \in \mathcal{F}_{(a\square)}|Y$ and $y \in \mathcal{F}_{(b\square)}|Y$ are with $g(a\square^{\smallfrown}x) = g(b\square^{\smallfrown}y)$, then $x(0)=y(0)$. In this case, $a\square^{\smallfrown}s$ and $b\square^{\smallfrown}s$ are mixed by $Y$ for all $s \in [Y/(a,b)]$. \label{4.16v}

\end{enumerate}

\end{lemma}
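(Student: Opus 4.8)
The plan is to prove each of the five items by importing, one at a time, Lemmas $2.14$–$2.23$ of \cite{KlSp}: since $g\colon\mathcal F\to\omega$ induces a continuous map on $\mathrm{FIN}^{[\infty]}$ and our front-relative notions of separating, mixing and deciding agree with those of \cite{KlSp} along that map, the separating–mixing combinatorics transfers essentially verbatim. Throughout one works below the fixed canonical $Y$ from \Cref{4.15}, so that $Y$ decides every $a\square,b\square$ (\ref{canon1}), completely decides every $a\square$ (\ref{canon2}), satisfies the dichotomies \ref{canon3}–\ref{canon5}, and mixing by $Y$ is an equivalence relation on $\hat{\mathcal F}|Y$; one also keeps handy \Cref{4.11}, \Cref{4.12}, \Cref{longlemma}, and, for the thinning steps, \Cref{fusion} together with \Cref{taycanon}.

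All five items have the same two-part shape; I describe \ref{4.16i} and then note the cosmetic changes for the rest. \emph{First assertion.} Assume $g(a^{\smallfrown}x)=g(b^{\smallfrown}y)$. Since $a^{\smallfrown}x(0)\sqsubseteq a^{\smallfrown}x$ and $b^{\smallfrown}y(0)\sqsubseteq b^{\smallfrown}y$, if $Y$ separated $a^{\smallfrown}x(0)$ and $b^{\smallfrown}y(0)$ we would get $g(a^{\smallfrown}x)\neq g(b^{\smallfrown}y)$; so by \ref{canon1} $Y$ mixes $a^{\smallfrown}x(0)$ and $b^{\smallfrown}y(0)$. Feeding this into the \cite{KlSp} argument — which combines transitivity of mixing, that $a$ and $b$ are both min-separated by $Y$, that $Y$ mixes $a$ and $b$, and a fusion step (\Cref{fusion}) using \Cref{taycanon} to pin down the mixing classes of $\{a^{\smallfrown}s:s\in[Z/a]\}$ and $\{b^{\smallfrown}s:s\in[Z/b]\}$ for refinements $Z\le Y$ — one obtains that unless $\operatorname{min-sep}(x(0))=\operatorname{min-sep}(y(0))$ there is $Z\le Y$ separating $a$ and $b$, contradicting that $Y$ mixes them; hence $\operatorname{min-sep}(x(0))=\operatorname{min-sep}(y(0))$. \emph{Second assertion.} By \ref{canon3}, either $a^{\smallfrown}s$ and $b^{\smallfrown}s$ are mixed by $Y$ for all $s\in[Y/(a,b)]$, or separated by $Y$ for all such $s$. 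The second alternative is impossible: since $Y$ mixes $a$ and $b$ there are $x,y\le Y/(a,b)$ with $a^{\smallfrown}x,b^{\smallfrown}y\in\mathcal F$ and $g(a^{\smallfrown}x)=g(b^{\smallfrown}y)$, so by the first assertion $\operatorname{min-sep}(x(0))=\operatorname{min-sep}(y(0))$, whence $a^{\smallfrown}x(0)$ and $a^{\smallfrown}y(0)$ are mixed by $Y$ ($a$ is min-separated), and transitivity would then make $a^{\smallfrown}y(0)$ and $b^{\smallfrown}y(0)$ mixed, contradicting that they are separated. Thus $a^{\smallfrown}s$ and $b^{\smallfrown}s$ are mixed by $Y$ for every $s$, and since $b$ is min-separated, transitivity gives that $a^{\smallfrown}s$ and $b^{\smallfrown}t$ are mixed by $Y$ whenever $\operatorname{min-sep}(s)=\operatorname{min-sep}(t)$.

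The remaining items differ only cosmetically: \ref{4.16ii} replaces $\operatorname{min-sep}$ by $\operatorname{max-sep}$, carries the $\square$-decorations throughout, and uses \Cref{4.12}(iii); \ref{4.16iii} uses $\operatorname{minmax-sep}$, splits the final conclusion into its $\operatorname{min-sep}$- and $\operatorname{max-sep}$-parts, and calls on \Cref{4.11}(iv) and \Cref{4.12}(iv) to pass to $a^{\smallfrown}s\uparrow$ and $b^{\smallfrown}s\uparrow$; \ref{4.16iv} replaces the ``matching invariant'' conclusion by ``$x(0)$ and $y(0)$ are $\sqsubseteq$-comparable'', which is exactly what strong separation yields, the mixing classes of $\{a\square^{\smallfrown}s\}$ being singletons so that an equality of $g$-values can only occur along a chain of initial segments; and \ref{4.16v} adds to \ref{4.16iv} that very strong separation separates $a\square^{\smallfrown}s$ from $a^{\smallfrown}s\uparrow$, excluding the proper-initial-segment case and forcing $x(0)=y(0)$, after which the ``mixed for all $s$'' clause is obtained exactly as in the second assertion of \ref{4.16i}.

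The only real obstacle is organizational, not conceptual: one must carry along the distinction between $a$ and $a\uparrow$ for $a\in\mathcal F|Y$ (with $a$ itself declared strongly mixed), and check at each step that the canonicity clauses \ref{canon3}–\ref{canon5}, which are not present in \cite{KlSp}, are respected; one must also confirm that the \cite{KlSp} fusion arguments still produce a \emph{separating} $Z\le Y$ once ``$x,y\le Z/(a,b)$'' is restricted to block sequences with $a^{\smallfrown}x,b^{\smallfrown}y\in\mathcal F$, which is immediate since $\mathcal F_{(a)}|Z$ and $\mathcal F_{(b)}|Z$ are uniform fronts, so cofinally many $x$ below any prescribed first block satisfy $a^{\smallfrown}x\in\mathcal F$. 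Beyond this bookkeeping and the canonical $Y$ of \Cref{4.15}, no new combinatorial idea is required.
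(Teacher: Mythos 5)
Your proposal follows the paper's treatment of this lemma essentially verbatim: the paper gives no independent proof, presenting the statement as the direct translation of Lemmas $2.14$--$2.23$ of Klein--Spinas to the front setting (justified by the continuous extension of $g$ to $\mathrm{FIN}^{[\infty]}$ and carried out below the canonical $Y$ of \Cref{4.15}), with the proofs declared ``exactly the same,'' which is precisely your import-and-bookkeep plan. One caution about your sketched detail for \ref{4.16i}: the shortcut that $g(a^{\smallfrown}x)=g(b^{\smallfrown}y)$ immediately prevents $Y$ from separating $a^{\smallfrown}x(0)$ and $b^{\smallfrown}y(0)$ is not literally immediate, because separation only quantifies over tails lying in $[Y/(a^{\smallfrown}x(0),b^{\smallfrown}y(0))]$, i.e.\ beyond $\operatorname{max}(x(0)\cup y(0))$, and the tails of the given $x$ and $y$ need not start that far out --- this overlap issue is exactly what the actual Klein--Spinas arguments (and the passage to the condensed $Y$ built from triple unions of blocks of $Y'$) are there to handle, so the step needs that machinery rather than the definition alone.
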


Now we state the translations of Lemma $2.27$ and $2.28$ of \cite{KlSp}:

\begin{lemma}[\cite{KlSp}] \label{4.17}

Let $a,b \in \hat{\mathcal{F}}|Y$. Assume that $a$ and $b$ are mixed by $Y$. Furthermore, assume that $a$ is min-separated by $Y$ and $b$ is minmax-separated by $Y$. Then if $\varnothing \neq x \in \mathcal{F}_{(a\square)}|Y$ and $\varnothing \neq y \in \mathcal{F}_{(b\square)}|Y$ are with $g(a\square^{\smallfrown}x) = g(b\square^{\smallfrown}y)$, then $\operatorname{min-sep}(x(0)) = \operatorname{min-sep}(y(0))$ and $\operatorname{max-sep}(x(0)) <_b \operatorname{max-sep}(y(0))$. In this case, $a^{\smallfrown}s \uparrow$ and $b^{\smallfrown}s \uparrow$ are mixed by $Y$ for all $s \in [Y/(a,b)]$.

\end{lemma}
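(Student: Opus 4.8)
The plan is to mirror the structure of the proof of \Cref{4.16}\ref{4.16i} and \ref{4.16ii}, combining the two separation hypotheses. Suppose $\varnothing \neq x \in \mathcal{F}_{(a\square)}|Y$ and $\varnothing \neq y \in \mathcal{F}_{(b\square)}|Y$ satisfy $g(a\square^{\smallfrown}x) = g(b\square^{\smallfrown}y)$. First I would extract the $\operatorname{min-sep}$ information: since $a$ is min-separated by $Y$ and $b$ is minmax-separated by $Y$, passing to $a^{\smallfrown}s\uparrow$ for appropriate $s$ and invoking the relevant parts of \Cref{4.11} and \Cref{4.12}, the equality $g(a\square^{\smallfrown}x) = g(b\square^{\smallfrown}y)$ will force $\operatorname{min-sep}(x(0)) = \operatorname{min-sep}(y(0))$; this is where the fact that both $a\square$ and $b\square$ ``see'' the $\operatorname{min}$-coordinate (directly for $a$, as the first coordinate of $\operatorname{minmax}$ for $b$) gets used. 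Next I would argue about $\operatorname{max-sep}$: because $a$ is only min-separated, $a\square^{\smallfrown}x(0)\uparrow$ is strongly mixed (by \Cref{4.11}(ii)), so $g$ cannot distinguish different $\operatorname{max}$-values coming from the $a$-side; but $b$ is minmax-separated, so it does see a $\operatorname{max}$-coordinate. The asymmetry means the only way to reconcile the two is for the $\operatorname{max}$-value on the $a$-side to be ``absorbed'' below the $\operatorname{max}$-value on the $b$-side, i.e. $\operatorname{max-sep}(x(0)) <_b \operatorname{max-sep}(y(0))$ — otherwise, by mixing on the $a$-side I could change $\operatorname{max-sep}(x(0))$ to something that the $b$-side, being minmax-separated, would be forced to distinguish, contradicting that mixing is transitive and that $g$-equality propagates through mixed pairs.

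For the ``In this case'' clause, I would then show $a^{\smallfrown}s\uparrow$ and $b^{\smallfrown}s\uparrow$ are mixed by $Y$ for all $s \in [Y/(a,b)]$. Here the idea is: fix any $s \in [Y/(a,b)]$; by \Cref{4.11}(ii), $a^{\smallfrown}s\uparrow$ is strongly mixed by $Y$, so in particular (using \Cref{4.12}\ref{4.12i}) $a^{\smallfrown}s\uparrow$ is mixed with $a^{\smallfrown}s'\uparrow$ for any $s'$ with $\operatorname{min-sep}(s') = \operatorname{min-sep}(s)$ and $\operatorname{max-sep}(s')$ much larger. By \Cref{4.11}(iv), $b^{\smallfrown}s\uparrow$ is max-separated by $Y$. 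Now pick witnesses $x,y$ realizing a $g$-collision between the $a$-side and $b$-side with $\operatorname{min-sep}(x(0)) = \operatorname{min-sep}(s)$ — such witnesses exist since $a$ and $b$ are mixed by $Y$, so no $Z \le Y$ separates them — and then transfer along the mixing/max-separation structure established in the first paragraph, together with the canonicity conditions \ref{canon3} (which guarantees the behavior of $a^{\smallfrown}s\uparrow$ vs. $b^{\smallfrown}s\uparrow$ is uniform in $s$), to conclude that $a^{\smallfrown}s\uparrow$ and $b^{\smallfrown}s\uparrow$ are mixed.

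The main obstacle I anticipate is bookkeeping the placeholders $\square$ correctly and ensuring the right instances of \Cref{4.11}, \Cref{4.12}, and \Cref{4.16} are available: the hypotheses are slightly asymmetric ($a$ min-separated vs. $b$ minmax-separated), and I must be careful that when I replace $a$ by $a^{\smallfrown}s\uparrow$ I land in the ``max-separated or strongly mixed'' world predicted by \Cref{4.11} rather than accidentally assuming more separation than is there. In particular, the inequality $\operatorname{max-sep}(x(0)) <_b \operatorname{max-sep}(y(0))$ (rather than equality) is forced precisely because the $a$-side genuinely cannot see $\operatorname{max}$, and getting the direction of this inequality right — and using the padding $Y = (Y'(3i)\cup Y'(3i+1)\cup Y'(3i+2))$ so that the relevant blocks are ``spread out'' enough for $\operatorname{max-sep}$ values to be comparable in the $<_b$ sense — is the delicate point. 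Everything else should be a routine translation of the corresponding argument in \cite{KlSp}, using that $g$ induces a continuous function on $\mathrm{FIN}^{[\infty]}$ and that mixing is a transitive (hence equivalence) relation.
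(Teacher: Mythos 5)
First, note what you are being compared against: the paper does not prove this lemma at all. It is stated as a direct translation of Lemmas $2.27$--$2.28$ of \cite{KlSp}, with the blanket remark earlier in the section that the proofs carry over verbatim to the front setting; so the only ``paper proof'' is the Klein--Spinas argument itself. Your sketch names the right ingredients (\Cref{4.11}, \Cref{4.12}, condition \ref{canon3} of canonicity), but the two steps that carry the content of \Cref{4.17} are asserted rather than argued, and the mechanism you offer for the strict inequality would not work as stated. Your ``otherwise'' argument --- change $\operatorname{max-sep}(x(0))$ on the $a$-side and claim the $b$-side ``is forced to distinguish'' it --- produces no contradiction: replacing $x$ by another $a$-side witness yields a \emph{different} collision whose $b$-side witness need not be the original $y$, and the $b$-side only ever ``sees'' the minmax of its own first block. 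Worse, the heuristic ``the $a$-side cannot see $\operatorname{max}$'' applies verbatim whether $\operatorname{max-sep}(x(0))$ is below, equal to, or above $\operatorname{max-sep}(y(0))$, so it cannot by itself single out the strict inequality; in particular ruling out $\operatorname{minmax-sep}(x(0))=\operatorname{minmax-sep}(y(0))$ requires a real argument. (The intuition that the max-information on the min-separated side is recovered at a strictly later, max-separated stage of $x$ is correct, but in this paper that picture is the content of \Cref{4.20} and \Cref{Claim4}, which are proved \emph{from} \Cref{4.17}; you cannot appeal to it here without circularity.)

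There is a second concrete gap in your treatment of the ``in this case'' clause: you ``pick witnesses $x,y$ realizing a $g$-collision with $\operatorname{min-sep}(x(0))=\operatorname{min-sep}(s)$'' for a prescribed $s$. Mixedness of $a$ and $b$ only guarantees, for every $Z\leq Y$, \emph{some} collision with witnesses from $Z/(a,b)$; you have no control over $\operatorname{min}(x(0))$ (take $Z$ whose blocks avoid $\operatorname{min}(s)$ to see this fails). The workable route, and the one the paper itself uses downstream (proofs of \Cref{Claim1} and \Cref{4.20}), is the reverse: extract from a single collision, via the first part of the lemma, that $a^{\smallfrown}Y(j)\uparrow$ and $b^{\smallfrown}Y(j)\uparrow$ are mixed for the particular $j$ with $\operatorname{min}(x(0))\in Y(j)$, and only then invoke condition \ref{canon3} of canonicity to make the conclusion uniform in $s\in[Y/(a,b)]$; your sketch gestures at \ref{canon3} but inverts this logic. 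Finally, both halves of your plan silently use that a collision $g(a^{\smallfrown}x)=g(b^{\smallfrown}y)$ witnesses non-separation of initial segments such as $a^{\smallfrown}x(0)$ and $b^{\smallfrown}y(0)$; by the definition of separation this is only legitimate when the remaining tails lie in $Y/(a^{\smallfrown}x(0),b^{\smallfrown}y(0))$, i.e.\ start beyond $\operatorname{max}(x(0)\cup y(0))$, and arranging this positioning is precisely part of the work done in \cite{KlSp} that the sketch skips. As it stands the proposal is a plausible plan, not a proof, and its central step for $\operatorname{max-sep}(x(0))<_b\operatorname{max-sep}(y(0))$ is the wrong idea.
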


\begin{lemma}[\cite{KlSp}]\label{4.17new}

Let $a,b \in \hat{\mathcal{F}}|Y$. Assume that $a\square$ and $b\square$ are mixed by $Y$.

\begin{enumerate}[label=(\roman*)]
    \item Assume that both $a\square$ and $b\square$ are still strongly separated by $Y$. Then $a\square^{\smallfrown}s\square$ and $b\square^{\smallfrown}s\square$ are mixed by $Y$ for every $s \in [Y/(a,b)]$. \label{4.17newi}
    \item Assume that $a\square$ is still strongly separated by $Y$ and $b\square$ is very strongly separated by $Y$. Then $a\square^{\smallfrown}s\square$ and $b\square^{\smallfrown}s\uparrow$ are mixed by $Y$ for all $s \in [Y/(a,b)]$. Furthermore, $a\square^{\smallfrown}s\square$ and $b\square^{\smallfrown}s$ are separated by $Y$ for all $s \in [Y/(a,b)]$. \label{4.17newii}
    \item Assume that both $a\square$ and $b\square$ are very strongly separated by $Y$. Then $a\square^{\smallfrown}s\uparrow$ and $b\square^{\smallfrown}s\uparrow$ are mixed by $Y$ for all $s \in [Y/(a,b)]$. Furthermore, $a\square^{\smallfrown}s$ and $b\square^{\smallfrown}s \uparrow$ are separated by $Y$ for all $s \in [Y/(a,b)]$. \label{4.17newiii}
    \end{enumerate}

\end{lemma}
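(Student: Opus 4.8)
The plan is to transcribe the proof of Lemma~2.28 of \cite{KlSp} into our setting. As noted above, the only difference between our separating--mixing relations and those of \cite{KlSp} is that ours are taken relative to block sequences $\leq Y/(a,b)$ rather than strictly beyond $a$ and $b$, and since $g$ extends to a continuous function on $\mathrm{FIN}^{[\infty]}$, every lemma of \cite{KlSp} used there transfers verbatim. The proof is a case analysis according to which of $a\square$, $b\square$ is \emph{still} versus \emph{very} strongly separated, and in each case the argument has the same shape. First, since $a\square$ and $b\square$ are mixed by $Y$ --- hence by any $W\leq Y$ --- one produces, below an arbitrary such $W$, nodes $a\square^{\smallfrown}x,\ b\square^{\smallfrown}y\in\mathcal{F}$ with $x,y\leq W/(a,b)$ and $g(a\square^{\smallfrown}x)=g(b\square^{\smallfrown}y)$. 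Second, one feeds this pair into \Cref{4.16}\ref{4.16iv} or \Cref{4.16}\ref{4.16v} to pin down how the leading blocks $x(0)$ and $y(0)$ compare. Third, one converts that information into the asserted mixing (or separation) of $a\square^{\smallfrown}s\square$ and $b\square^{\smallfrown}s\square$ using transitivity of mixing together with the definitions of ``still'' and ``very'' strongly separated, which say precisely how $a\square^{\smallfrown}s$ relates to $a^{\smallfrown}s\uparrow$. Part \ref{canon3} of the definition of canonical is what lets one pass from ``the relation holds for the block $s=x(0)$ just produced'' to ``it holds for all $s\in[Y/(a,b)]$'', which is the form in which the conclusion is stated, and the coarsening by which $Y$ is obtained from $Y_0$ guarantees that the relevant block comparisons output by \Cref{4.16} are unambiguous.

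For \ref{4.17newi}, with both $a\square$ and $b\square$ still strongly separated, $a\square^{\smallfrown}s$ is mixed with $a^{\smallfrown}s\uparrow$ and $b\square^{\smallfrown}s$ with $b^{\smallfrown}s\uparrow$ by definition, so transitivity of mixing (with \Cref{4.11} and \Cref{4.12} to cover the remaining $\square$-variants) reduces the claim to showing that $a^{\smallfrown}s\uparrow$ and $b^{\smallfrown}s\uparrow$ are mixed, which one reads off the witnesses above via \Cref{4.16}\ref{4.16iv} --- the leading blocks are comparable and the trailing ``$\uparrow$'' absorbs the mismatch, exactly as in \cite{KlSp}. For \ref{4.17newii}, with $a\square$ still and $b\square$ very strongly separated, the same step via \Cref{4.16}\ref{4.16iv} produces the mixing of $a\square^{\smallfrown}s\square$ with $b\square^{\smallfrown}s\uparrow$; the ``furthermore'' clause then follows because $b\square$ being very strongly separated means $b\square^{\smallfrown}s$ is \emph{separated} from $b^{\smallfrown}s\uparrow$, so a mix of $a\square^{\smallfrown}s\square$ with $b\square^{\smallfrown}s$ would, by transitivity, contradict that separation. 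For \ref{4.17newiii}, with both very strongly separated, the argument is identical but uses \Cref{4.16}\ref{4.16v} in place of \ref{4.16iv}, which forces $x(0)=y(0)$; this gives the mixing of $a\square^{\smallfrown}s\uparrow$ with $b\square^{\smallfrown}s\uparrow$, and the separation of $a\square^{\smallfrown}s$ from $b\square^{\smallfrown}s\uparrow$ again follows from the defining separation of $a\square^{\smallfrown}s$ from $a^{\smallfrown}s\uparrow$ together with the mixing just obtained and transitivity.

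The part I expect to take the most care is not any single case but the $\square$-bookkeeping: each displayed statement is really several statements about the $\uparrow$/no-$\uparrow$ variants of $a$, of $b$, and of the appended blocks, and one must verify that each reduction via transitivity of mixing and via \Cref{4.11} and \Cref{4.12} is legitimate in every combination --- including the degenerate cases $a\in\mathcal{F}$ or $b\in\mathcal{F}$, where by our convention $a$ itself is simply declared strongly mixed and only $a\uparrow$ carries separation data. Since each individual verification coincides word for word with the corresponding step of \cite{KlSp}, the real content is in assembling them correctly, so the write-up will reproduce the proof of \cite[Lemma~2.28]{KlSp} in our notation.
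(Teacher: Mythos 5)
Your proposal matches the paper's treatment: the paper does not reprove this lemma but states it as the translation of Lemma 2.28 of \cite{KlSp}, relying on the earlier remark that, since $g$ induces a continuous map on $\mathrm{FIN}^{[\infty]}$ and the mixing/separating notions have been adjusted to fronts, the Klein--Spinas proofs carry over verbatim, with \Cref{4.16}, transitivity of mixing, \Cref{4.11}, \Cref{4.12}, and condition \ref{canon3} of canonicity playing exactly the roles you assign them. Your outline (witnesses with equal $g$-values from mixing, comparison of leading blocks via \Cref{4.16}\ref{4.16iv}/\ref{4.16v}, homogenization over all $s$ via canonicity, and the $\square$-bookkeeping including the $a\in\mathcal{F}$ convention) is the same route, so it is correct in approach; just be sure the final write-up spells out the small glossed steps, e.g.\ how the $\uparrow$-variants in \ref{4.17newiii} are reached by applying the ``in this case'' clause of \Cref{4.16}\ref{4.16v} to blocks of the form $s\cup Z(0)$, and why the equal-leading-block alternative is ruled out in \ref{4.17newii}.
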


Finally, we state Lemmas $2.29-2.31$ of \cite{KlSp} :

\begin{lemma}[\cite{KlSp}] \label{4.18}

Let $a,b \in (\hat{\mathcal{F}}\setminus \mathcal{F})|Y$.

\begin{enumerate}[label=(\roman*)]
    \item Assume that $a$ and $b\square$ are mixed by $Y$. If $a$ is min-separated by $Y$, then $b\square$ is neither max-separated nor strongly separated by $Y$. The same result holds when $b \in \mathcal{F}|Y$ and $b\square = b\uparrow$ as well.
    \item Assume that $a\square$ and $b\square$ are mixed by $Y$. If $a\square$ is max-separated by $Y$, then $b\square$ is neither minmax-separated nor strongly separated by $Y$. The same result holds when $a,b \in \mathcal{F}|Y$ and $a\square = a\uparrow$ and $b\square=b\uparrow$ as well.
    \item Assume that $a$ and $b\square$ are mixed by $Y$. If $a$ is minmax-separated by $Y$, then $b\square$ is not strongly separated by $Y$. The same result holds when $b \in \mathcal{F}|Y$ and $b\square = b\uparrow$ as well.
\end{enumerate}

\end{lemma}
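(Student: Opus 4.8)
The plan is to obtain all three parts from one argument, namely the translation to our setting of the proofs of Lemmas $2.29$--$2.31$ of \cite{KlSp}. Fix the canonical $Y$. In each case I assume toward a contradiction that $b\square$ carries the refined separation type that is supposed to be excluded, and then I produce two blocks $s_0 \neq s_1 \in [Y/(a,b)]$ for which the separation type of $a$ (resp. $a\square$) \emph{mixes} $a^{\smallfrown}s_0$ with $a^{\smallfrown}s_1$ by $Y$ while the hypothesised type of $b\square$ \emph{separates} $b\square^{\smallfrown}s_0$ from $b\square^{\smallfrown}s_1$ by $Y$, or the other way around. Writing $Y/(a,b) = (Z_i)_{i<\omega}$, the pairs I would use all come from $\{Z_0\cup Z_2,\, Z_1\cup Z_2\}$ (same greatest element, different least element), $\{Z_0\cup Z_1,\, Z_0\cup Z_2\}$ (same least element, different greatest element), and $\{Z_0\cup Z_2,\, Z_0\cup Z_1\cup Z_2\}$ (same least and greatest element, not identical); for instance, in part (i) the first pair contradicts $b\square$ being max-separated and the second contradicts $b\square$ being strongly separated, in part (ii) the first pair contradicts minmax-separation of $b\square$ and the third contradicts strong separation of $b\square$, and in part (iii) the third pair contradicts strong separation of $b\square$. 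That $s_0,s_1$ behave as claimed on the $a$-side and on the $b\square$-side is immediate from the definitions of the five separation notions, and the resulting contradiction is then squeezed out by transitivity of mixing, once one has the bridging step below.

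The bridging step is: \emph{if $a\square$ and $b\square$ are mixed by $Y$, then $a\square^{\smallfrown}s$ and $b\square^{\smallfrown}s$ are mixed by $Y$ for every $s \in [Y/(a,b)]$.} First I would invoke canonicity \ref{canon3}, which already splits into the alternatives ``$a\square^{\smallfrown}s$ and $b\square^{\smallfrown}s$ are mixed by $Y$ for all such $s$'' and ``$a\square^{\smallfrown}s$ and $b\square^{\smallfrown}s$ are separated by $Y$ for all such $s$'', so it only remains to rule out the second. Under the second alternative I would take a putative coincidence $g(a\square^{\smallfrown}x) = g(b\square^{\smallfrown}y)$ witnessing that $Y$ fails to separate $a\square$ and $b\square$, and push it down --- using the uniformity of $\mathcal{F}$ on $Y$ together with the first-coordinate information recorded in \Cref{4.16}, \Cref{4.17}, and \Cref{4.17new}, exactly as at the corresponding point of \cite{KlSp} --- to a coincidence with $x(0) = y(0)$, which is absurd once $a\square^{\smallfrown}x(0)$ and $b\square^{\smallfrown}x(0)$ are separated by $Y$; hence $Y$ does separate $a\square$ and $b\square$, contradicting the hypothesis. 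Granting the bridge, transitivity of mixing finishes each case: in the max-separated case of (i), $a^{\smallfrown}s_0$ is mixed with $b\square^{\smallfrown}s_0$, which is mixed with $b\square^{\smallfrown}s_1$, which is mixed with $a^{\smallfrown}s_1$, so $a^{\smallfrown}s_0$ and $a^{\smallfrown}s_1$ are mixed by $Y$ --- contradicting that $a$ is min-separated; the other cases run the same chain through whichever side carries the ``mixed'' pair. The trailing clauses about $b \in \mathcal{F}|Y$ with $b\square = b\uparrow$ (and, in (ii), $a \in \mathcal{F}|Y$ with $a\square = a\uparrow$) are treated verbatim, using the remark after \Cref{longlemma} that $(b\uparrow)^{\smallfrown}s \in \hat{\mathcal{F}}$ and the convention that $b \in \mathcal{F}|Y$ counts as strongly mixed by $Y$.

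I expect the bridging step to be the only genuinely substantive point; everything else is the transitivity-of-mixing bookkeeping just sketched. The difficulty there is that pushing a coincidence with $x(0) \neq y(0)$ down to one with equal first coordinates forces one to track how the separation behaviour of $a\square$ and $b\square$ interacts across the mixing, and it is precisely in this tracking that the mismatch of the two separation types is exploited, reproducing the case analysis of \cite{KlSp}. Beyond that, the remaining care is routine: keeping straight which of the two slots carries $\square = \uparrow$, and handling the degenerate case in which some $\mathcal{F}_{(a)}|Y$ has uniformity rank $1$, so that the admissible extensions of $a$ inside $\mathcal{F}$ are forced to be single blocks.
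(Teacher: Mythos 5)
Your combinatorial skeleton is fine: the pairs $\{Z_0\cup Z_2,\,Z_1\cup Z_2\}$, $\{Z_0\cup Z_1,\,Z_0\cup Z_2\}$, $\{Z_0\cup Z_2,\,Z_0\cup Z_1\cup Z_2\}$ together with transitivity of mixing would indeed yield all three parts. But, as you yourself note, everything rests on the bridging step, and that step is false as stated. The claim ``if $a\square$ and $b\square$ are mixed by $Y$ then $a\square^{\smallfrown}s$ and $b\square^{\smallfrown}s$ are mixed by $Y$ for every $s\in[Y/(a,b)]$'' is directly contradicted by \Cref{4.17new} part \ref{4.17newii}: for a mixed pair with $a\square$ still strongly separated and $b\square$ very strongly separated, $a\square^{\smallfrown}s$ and $b\square^{\smallfrown}s$ are \emph{separated} by $Y$ for every $s$. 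A concrete instance inside this very framework is $\mathcal{F}=\mathrm{FIN}^{[3]}$, $g(a)=a(0)\cup a(1)\cup a(2)$, $a=(p\cup q)$, $b=(p,q)$: these are mixed, yet $g(a^{\smallfrown}s^{\smallfrown}w)=p\cup q\cup s\cup w$ can never equal $g(b^{\smallfrown}s)=p\cup q\cup s$. The same failure occurs in the min-sep/minmax-sep configuration, which genuinely occurs by \Cref{4.20}: there \Cref{4.17} forces $\operatorname{max-sep}(x(0))<_b\operatorname{max-sep}(y(0))$ for every coincidence, so $a^{\smallfrown}s$ and $b^{\smallfrown}s$ are again separated. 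So no general bridge of this form is available, and your chains of mixings cannot be closed by appealing to it.

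Your proposed proof of the bridge does not repair this, because it is circular in exactly the cases \Cref{4.18} is about: to rule out the second alternative of \ref{canon3} you want to push a coincidence $g(a\square^{\smallfrown}x)=g(b\square^{\smallfrown}y)$ down to one with $x(0)=y(0)$ ``using the first-coordinate information in \Cref{4.16}, \Cref{4.17}, \Cref{4.17new}'' --- but those lemmas record such information only for matched or explicitly compatible pairs of separation types (min/min, max/max, minmax/minmax, strong/strong, min/minmax), and say nothing about min vs.\ max, min vs.\ strong, max vs.\ minmax, max vs.\ strong, or minmax vs.\ strong, which are precisely the pairs you must handle; moreover the min/minmax case shows coincidences need not be reducible to equal first coordinates at all. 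Note also that the paper does not reprove this lemma: it is quoted from Klein--Spinas (their Lemmas $2.29$--$2.31$), whose arguments work directly with the witnesses of mixing, the $\uparrow$-lemmas, and the strong-mixing machinery (compare the style of the paper's proof of \Cref{4.20}) rather than through a two-sided bridge. To make your write-up correct you would need to replace the bridging step by a direct argument of that kind for each mismatched pair of types.
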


Now we start to prove new lemmas to be used in this paper.

\begin{lemma} \label{4.19}

Assume that $a \in \hat{\mathcal{F}}|Y$ is still strongly separated by $Y$. Let $a <_b x \leq Y$ be with $a^{\smallfrown} x \in \hat{\mathcal{F}}|Y$. If there is a least $k$ with $a^{\smallfrown} (x \upharpoonright k)$ very strongly separated by $Y$, then for any $l < k$ $a^{\smallfrown} (x \upharpoonright l)$ is either strongly mixed or still strongly separated by $Y$.
\end{lemma}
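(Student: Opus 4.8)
The plan is to argue by contradiction, extracting from a putative counterexample a node which is min-, max-, or minmax-separated by $Y$ and yet is mixed by $Y$ with a \emph{strongly} separated node --- a configuration forbidden by \Cref{4.18}.

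First I would dispose of the easy half. Since $k$ is the least index with $a^{\smallfrown}(x\upharpoonright k)$ very strongly separated by $Y$, no $l<k$ has $a^{\smallfrown}(x\upharpoonright l)$ very strongly separated. By condition \ref{canon2} of being canonical, every $a^{\smallfrown}(x\upharpoonright l)$ with $l\le k$ is completely decided by $Y$, that is, strongly mixed or separated in some sense (min-, max-, minmax-, or strongly separated), and a strongly separated node is still or very strongly separated. Hence, if the conclusion failed, some $l<k$ would have $a^{\smallfrown}(x\upharpoonright l)$ min-, max-, or minmax-separated by $Y$; let $l_0$ be the least such. As $a=a^{\smallfrown}(x\upharpoonright 0)$ is still strongly separated we get $l_0\ge 1$, and since every $a^{\smallfrown}(x\upharpoonright l)$ with $l<k$ is a proper initial segment of $a^{\smallfrown}(x\upharpoonright k)\in\hat{\mathcal{F}}\setminus\mathcal{F}$ and $\mathcal{F}$ is Nash-Williams, all these nodes lie in $\hat{\mathcal{F}}\setminus\mathcal{F}$, so the separating/mixing lemmas of this section apply to them.

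Next I would walk back along the path from $l_0$ to the last still-strongly-separated node. Let $j_0<l_0$ be largest with $a^{\smallfrown}(x\upharpoonright j_0)$ still strongly separated by $Y$ --- this exists since $a$ qualifies. By maximality of $j_0$, minimality of $l_0$, the exclusion of very strong separation at indices below $k$, and complete decision, every node $a^{\smallfrown}(x\upharpoonright i)$ with $j_0<i<l_0$ is strongly mixed by $Y$. Applying \Cref{4.12}\ref{4.12i} to each such node and using transitivity of mixing, $a^{\smallfrown}(x\upharpoonright(j_0+1))$ is mixed by $Y$ with $a^{\smallfrown}(x\upharpoonright l_0)$. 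On the other hand, put $c=a^{\smallfrown}(x\upharpoonright j_0)$ and $e_0=c^{\smallfrown}x(j_0)\uparrow=a^{\smallfrown}(x\upharpoonright(j_0+1))\uparrow$. Since $c$ is still strongly separated by $Y$, the defining clause of ``still strongly separated'' gives that $a^{\smallfrown}(x\upharpoonright(j_0+1))=c^{\smallfrown}x(j_0)$ and $e_0$ are mixed by $Y$, and the remark on the propagation of still strong separation under $\uparrow$ gives that $e_0$ is again still strongly separated --- in particular strongly separated --- by $Y$. Transitivity of mixing now yields that $a^{\smallfrown}(x\upharpoonright l_0)$ is mixed by $Y$ with the strongly separated node $e_0$, contradicting the relevant clause ((i), (ii), or (iii)) of \Cref{4.18} according as $a^{\smallfrown}(x\upharpoonright l_0)$ is min-, max-, or minmax-separated. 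This contradiction proves the lemma.

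The step I expect to be the main obstacle is precisely this bridging move. A strongly separated node is separated --- not mixed --- from each of its own one-block extensions, so one cannot play $a$ itself against $a^{\smallfrown}(x\upharpoonright l_0)$ in \Cref{4.18}; instead one must produce a \emph{different} strongly separated node, namely the $\uparrow$-version $e_0$ of the last still-strongly-separated node on the path, and then transport the mixing relation from $e_0$ across the intervening strongly-mixed stretch to $a^{\smallfrown}(x\upharpoonright l_0)$. Getting this right hinges on the exact classification of the intermediate nodes (from minimality of $l_0$, minimality of $k$, and complete decision), on \Cref{4.12}\ref{4.12i} to move one block at a time through the strongly-mixed part, and on the defining property of ``still strongly separated'' together with its behavior under $\uparrow$; one must also check throughout that all nodes involved lie in $\hat{\mathcal{F}}\setminus\mathcal{F}$, so that \Cref{4.11}, \Cref{4.12}, and \Cref{4.18} genuinely apply.
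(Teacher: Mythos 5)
Your proof is correct and follows essentially the same route as the paper's: the paper runs a forward induction on $l$ (base case at $x(0)$, then the greatest strongly separated index below $l$), but the core mechanism is identical --- use the defining mixing property of still strong separation to produce the $\uparrow$-marked node, which remains still strongly separated, transport mixing across the strongly mixed stretch via \Cref{4.12}\ref{4.12i} and transitivity, and rule out min-, max-, and minmax-separation by \Cref{4.18}. Your contradiction-style organization, and your explicit care to play the $\uparrow$-node (rather than the still strongly separated node itself) against the offending node in \Cref{4.18}, is if anything a slightly cleaner write-up of the same argument.
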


\begin{proof}

Let $x = (x(0), \ldots, x(|x|-1))$. Since $a$ is still strongly separated, we know that $a^{\smallfrown} x(0)$ and $a^{\smallfrown} x(0) \uparrow$ are mixed by $Y$, and we also know that $a^{\smallfrown} x(0) \uparrow$ is still strongly separated by $Y$. Now, \Cref{4.18} implies that $a^{\smallfrown} x(0)$ is either strongly mixed or strongly separated by $Y$. If $a^{\smallfrown}x(0)$ is very strongly separated by $Y$, then we are done. If $a^{\smallfrown}x(0)$ is still strongly separated by $Y$, then we can do the same argument to see that $a^{\smallfrown} x(0)^{\smallfrown} x(1)$ is either strongly mixed or strongly separated by $Y$. Finally, if $a^{\smallfrown}x(0)$ is strongly mixed by $Y$, then \Cref{4.12} part \ref{4.12i} implies that $a^{\smallfrown}x(0)$ and $a^{\smallfrown}x(0)^{\smallfrown} x(1)$ are mixed by $Y$, which means $a^{\smallfrown}x(0)\uparrow$ and $a^{\smallfrown}x(0)^{\smallfrown} x(1)$ are mixed by $Y$, and so $a^{\smallfrown}x(0)^{\smallfrown} x(1)$ is either strongly mixed or strongly separated by $Y$. Now fix $l < k$ and suppose that $a^{\smallfrown} (x \upharpoonright l')$ is either strongly mixed or strongly separated by $Y$ for all $l' < l$. Find the greatest $l_0 \leq l$ for which $a^{\smallfrown} (x \upharpoonright l_0)$ is strongly separated by $Y$. If $a^{\smallfrown} (x \upharpoonright l_0)$ is very strongly separated by $Y$, then we get the result. Otherwise, by the argument in the base case and repeated applications of \Cref{4.12} part \ref{4.12i}, $a^{\smallfrown} (x \upharpoonright l_0)$ and $a^{\smallfrown} (x \upharpoonright l)$ are mixed by $Y$, and $a^{\smallfrown} (x \upharpoonright l_0)$ is still strongly separated by $Y$. By \Cref{4.18}, $a^{\smallfrown} (x \upharpoonright l)$ is either strongly mixed or strongly separated by $Y$, finishing the induction. \qedhere

\end{proof}

\begin{lemma} \label{4.20}

Let $a,b \in (\hat{\mathcal{F}} \setminus \mathcal{F})|Y$ and assume that $a$ and $b$ are mixed by $Y$. Furthermore, assume that both $a$ and $b$ are separated in some sense by $Y$. Then either $a$ and $b$ have the same separation type, or $a$ is minmax-separated by $Y$ and $b$ is min-separated by $Y$, or vice versa. Moreover, if $b$ is minmax-separated by $Y$ and $a$ is min-separated by $Y$, then there is $a <_b y \leq Y$ such that $a^{\smallfrown}y \in \mathcal{F}$ and there is $1\leq k < |y|$ for which $a^{\smallfrown} (y \upharpoonright k)$ is separated in some sense by $Y$. Finally, in this case, for all $a <_b y \leq Y$ such that $a^{\smallfrown}y \in \mathcal{F}$, there is some $1\leq k < |y|$ for which $a^{\smallfrown} (y \upharpoonright k)$ is separated in some sense by $Y$, and $a^{\smallfrown} (y \upharpoonright k_0)$ is max-separated by $Y$, where $k_0$ is the least such $k$.

\end{lemma}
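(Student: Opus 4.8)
The plan is to combine the restriction results of \Cref{4.18} with the ``mixing propagates'' lemmas (\Cref{4.16}, \Cref{4.17}, \Cref{4.17new}) and the canonicity of $Y$. First I would dispense with the classification of separation types: since $a$ and $b$ are mixed by $Y$ and both are separated in some sense, pick witnesses $x\in\mathcal{F}_{(a)}|Y$ and $y\in\mathcal{F}_{(b)}|Y$ with $g(a^{\smallfrown}x)=g(b^{\smallfrown}y)$ (these exist because $a$ and $b$ are mixed, hence not separated, so $Y$ cannot separate them). Now run through \Cref{4.18}: if $a$ is min-separated, then $b$ is neither max- nor strongly separated, so $b$ is min- or minmax-separated; if $a$ is max-separated, then $b$ is neither minmax- nor strongly separated, so (using also part (i) applied with roles reversed) $b$ is max-separated; if $a$ is minmax-separated, then $b$ is not strongly separated, and \Cref{4.18}(i) reversed rules out $b$ being min-separated only if we are careful — actually \Cref{4.18}(i) says if $b$ were min-separated and $a,b$ mixed then $a$ is not minmax-separated, contradiction, so $b$ is minmax-separated; and if $a$ is strongly separated, symmetric use of the three parts forces $b$ strongly separated. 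Thus the only mixed-but-different pairing is $\{\text{min-separated},\text{minmax-separated}\}$, which is the first assertion.

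For the second assertion, suppose $b$ is minmax-separated and $a$ is min-separated by $Y$. I would argue by contradiction: assume that for \emph{some} $a<_b y\le Y$ with $a^{\smallfrown}y\in\mathcal{F}$, no proper initial segment $a^{\smallfrown}(y\upharpoonright k)$ with $1\le k<|y|$ is separated in some sense by $Y$. By canonicity condition \ref{canon4} (applied to $a$), this then holds for \emph{every} such $y$: i.e., along every branch through $a$, every strict initial extension $a^{\smallfrown}(y\upharpoonright k)$ is strongly mixed by $Y$, with only $a^{\smallfrown}y=a^{\smallfrown}(y\upharpoonright|y|)\in\mathcal{F}$ itself possibly separated. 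Using \Cref{4.12}(i) repeatedly, $a$ is mixed with $a^{\smallfrown}(y\upharpoonright k)$ for all $k<|y|$; combined with the fact that $a$ is min-separated and \Cref{4.16i} / \Cref{4.12ii}, the value $g(a^{\smallfrown}y)$ depends on the branch $y$ only through $\operatorname{min-sep}(y(0))$. On the other hand, $a$ mixed with $b$ and $b$ minmax-separated, via \Cref{4.17}, forces $g(b^{\smallfrown}z)$ for $z\in\mathcal{F}_{(b)}|Y$ to be determined by $\operatorname{minmax-sep}(z(0))$ together with whatever finer data comes from the $b$-branch; picking two branches $z,z'$ below $b$ with $\operatorname{min-sep}(z(0))=\operatorname{min-sep}(z'(0))$ but $\operatorname{max-sep}(z(0))\ne\operatorname{max-sep}(z'(0))$ gives $g(b^{\smallfrown}z)\ne g(b^{\smallfrown}z')$, yet both must equal the (single, min-sep-determined) value of $g$ on the matching $a$-branch — contradiction with $a,b$ being mixed. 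Hence there is at least one $y$ with an intermediate separated initial segment.

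Finally, for the ``moreover'' clause I would again invoke canonicity: \ref{canon4} upgrades ``there exists one $y$ with an intermediate separated segment'' to ``for every $a<_b y\le Y$ with $a^{\smallfrown}y\in\mathcal{F}$ there is $1\le k<|y|$ with $a^{\smallfrown}(y\upharpoonright k)$ separated in some sense,'' and \ref{canon5} (which applies precisely because $a$ is min-separated and the ``for all $y$ there is a separated initial segment'' hypothesis now holds) says the separation type of the \emph{least} such $k_0$ is the same across all branches; it remains to show that type is max-separation. For this, suppose toward a contradiction that $a^{\smallfrown}(y\upharpoonright k_0)$ is (say) min-, minmax-, or strongly separated. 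Since every strict initial segment strictly below $k_0$ is strongly mixed, \Cref{4.12}(i) gives that $a$ is mixed with $a^{\smallfrown}(y\upharpoonright(k_0-1))$, and then \Cref{4.12}(i) again (or directly, using that $a^{\smallfrown}(y\upharpoonright(k_0-1))$ is strongly mixed) gives that $a^{\smallfrown}(y\upharpoonright(k_0-1))$ is mixed with $a^{\smallfrown}(y\upharpoonright k_0)$, hence by transitivity $a$ is mixed with $a^{\smallfrown}(y\upharpoonright k_0)$; now $a$ is min-separated and $a^{\smallfrown}(y\upharpoonright k_0)$ is mixed with it, so \Cref{4.18}(i)/(iii) forbid $a^{\smallfrown}(y\upharpoonright k_0)$ from being max-, minmax-, or strongly separated, leaving only min-separated. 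But one checks that if $a$ is min-separated, $a$ mixed with $a^{\smallfrown}(y\upharpoonright k_0)$, and $a^{\smallfrown}(y\upharpoonright k_0)$ also min-separated, then \Cref{4.16i} combined with the way $\operatorname{min-sep}$ is already fixed at coordinate $y(0)$ (lying below $a$'s block structure after the tripling $Y$ was subjected to) collapses the separation, contradicting that $a^{\smallfrown}(y\upharpoonright k_0)$ is separated in some sense at all — so in fact the ``min-separated'' alternative is vacuous here and $a^{\smallfrown}(y\upharpoonright k_0)$ must be max-separated.

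\medskip

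\noindent\emph{Main obstacle.} The delicate point is the last paragraph: ruling out that the first separated intermediate segment $a^{\smallfrown}(y\upharpoonright k_0)$ is itself \emph{min}-separated. The restriction lemmas of \Cref{4.18} leave min/min as a formally allowed mixed pair, so eliminating it requires actually tracking \emph{which coordinate} carries the $\operatorname{min-sep}$ information — it was already extracted at coordinate $y(0)$ because $a$ is min-separated — and arguing that a second min-separation further along cannot produce a genuinely new equivalence class, which is where the padding/tripling passage to $Y$ (ensuring the separating maps have disjoint ranges on $[Y]$) and a careful bookkeeping of $\Gamma_\gamma$-values must be used. This is exactly the kind of finer analysis that distinguishes the front-canonization here from the coarser Borel statement of \Cref{klsp}, and it is where I expect the bulk of the technical work to lie.
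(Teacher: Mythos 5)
Your first assertion is handled the same way as in the paper (it is immediate from \Cref{4.18}); note only that you misquote \Cref{4.18}: it does \emph{not} say that a min-separated sequence cannot be mixed with a minmax-separated one (that pair is exactly the allowed exception), but since your summary of the case analysis is the correct statement this slip is harmless. The real problem is your proof of the second assertion. You assume all intermediate segments over $a$ are strongly mixed, correctly deduce that $g$ on branches through $a$ depends only on $\operatorname{min-sep}$ of the first block, and then claim that for two branches $z,z'$ through $b$ with equal min but different max of the first block, ``both must equal the value of $g$ on the matching $a$-branch.'' Mixing of $a$ and $b$ is an \emph{existential} statement: for every restriction there exist some matched extensions; it does not provide, for an arbitrary branch through $b$, a branch through $a$ with the same $g$-value, so this step does not follow. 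The paper closes exactly this hole differently: it takes one matched pair $g(b^{\smallfrown}x)=g(a^{\smallfrown}y)$, applies \Cref{4.17} to get $\operatorname{min-sep}(x(0))=\operatorname{min-sep}(y(0))$ and $\operatorname{max-sep}(y(0))<_b\operatorname{max-sep}(x(0))$, chooses $s$ with the same min as $y(0)$ but $\operatorname{max-sep}(s)>_b\operatorname{max-sep}(x(0))$, chains mixing ($a^{\smallfrown}s$ with $a^{\smallfrown}y(0)$ by min-separation, with $a^{\smallfrown}y$ by the strong-mixing hypothesis, with $b^{\smallfrown}x$ since the $g$-values agree), and then uses the mixedness of $a^{\smallfrown}s$ and $b^{\smallfrown}x$ (legitimate here because $b^{\smallfrown}x\in\mathcal{F}$) to produce a \emph{new} matched pair, which contradicts the max-inequality of \Cref{4.17}. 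Some such extraction of a second matched pair is indispensable; your sketch omits it.

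Your proof of the final clause is broken at its first step. You invoke \Cref{4.12} part \ref{4.12i} to conclude that $a$ is mixed with $a^{\smallfrown}(y\upharpoonright(k_0-1))$, but that lemma requires the base sequence to be \emph{strongly mixed}, whereas $a$ is min-separated; the chain only gives that $a^{\smallfrown}y(0)$ is mixed with $a^{\smallfrown}(y\upharpoonright k_0)$, not that $a$ is. In fact the step cannot be repaired: if $a$ were mixed with $a^{\smallfrown}(y\upharpoonright k_0)$, then \Cref{4.18} would forbid $a^{\smallfrown}(y\upharpoonright k_0)$ from being max-separated, i.e.\ it would contradict the very conclusion you are trying to prove, and your subsequent attempt to dispose of the remaining ``min-separated'' alternative is only a gesture (``one checks \dots collapses the separation''), not an argument. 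The missing idea is to bring in the $b$-side: from the matched pair and \Cref{4.17} one has $x(0)=Y(j)\cup A$, $y(0)=Y(j)\cup B$ for the block $Y(j)$ containing the common minimum; $b^{\smallfrown}Y(j)\uparrow$ is max-separated by \Cref{4.11}, it is mixed with $a^{\smallfrown}Y(j)\uparrow$ by the last clause of \Cref{4.17}, which is mixed with $a^{\smallfrown}y(0)$ by \Cref{4.12} part \ref{4.12ii}, which is mixed with $a^{\smallfrown}(y\upharpoonright k_0)$ by \ref{4.12i}; then \Cref{4.18} forces $a^{\smallfrown}(y\upharpoonright k_0)$ to be max-separated, and \ref{canon4}, \ref{canon5} (which you do use correctly) globalize this to all branches. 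So the ``min/min bookkeeping'' you flag as the main obstacle never arises in the correct argument; the max-separation is obtained directly from mixing with a max-separated sequence over $b$.
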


\begin{proof}

The first assertion directly follows from \Cref{4.18}. Now let us assume that $a,b \in (\hat{\mathcal{F}} \setminus \mathcal{F})|Y$ are mixed by $Y$, $b$ is minmax-separated by $Y$ and $a$ is min-separated by $Y$. Take $a,b <_b x,y$ with $g(b^{\smallfrown}x)=g(a^{\smallfrown}y)$. Assume towards a contradiction that for all $1 \leq k \leq |y|$, $a^{\smallfrown}(y \upharpoonright k)$ is strongly mixed by $Y$. It follows that $a^{\smallfrown} y(0)$ and $a^{\smallfrown}y$ are mixed by $Y$. Take $s \in [Y/a]$ such that $\operatorname{min-sep}(s) = \operatorname{min-sep}(y(0))$ but $\operatorname{max-sep}(s) >_b \operatorname{max-sep}(x(0))$. By definition of mixing, $b^{\smallfrown}x$ and $a^{\smallfrown}y$ are mixed by $Y$. Since $a$ is min-separated by $Y$, $a^{\smallfrown}y(0)$ and $b^{\smallfrown}m$ are mixed by $Y$. It follows by transitivity of mixing that $a^{\smallfrown}s$ and $b^{\smallfrown}x$ are mixed by $Y$. Find $s,x <_b z \leq Y$ such that $a^{\smallfrown}s^{\smallfrown}z \in \mathcal{F}|Y$ and $g(b^{\smallfrown}x) = g(a^{\smallfrown}s^{\smallfrown}z)$. This contradicts \Cref{4.17} since $\operatorname{max-sep}(s) >_b \operatorname{max-sep}(x(0))$.

To finish, assume that $a,b \in (\hat{\mathcal{F}} \setminus \mathcal{F})|Y$ are mixed by $Y$, $b$ is minmax-separated by $Y$ and $a$ is min-separated by $Y$. Take $a,b<x,y$ with $g(b^{\smallfrown}x)=g(a^{\smallfrown}y)$. By the preceding paragraph and \ref{canon4} of canonical, there is a least $1 \leq k_0 < |y|$ such that $a^{\smallfrown} (y \upharpoonright k_0)$ is separated in some sense by $Y$. By \ref{canon5} of canonical, it suffices to show that $a^{\smallfrown} (y \upharpoonright k_0)$ is max-separated by $Y$. Indeed, since $g(b^{\smallfrown}x)=g(a^{\smallfrown}y)$, by \Cref{4.17}, we know that $\operatorname{min-sep}(x(0)) = \operatorname{min-sep}(y(0))$ and $\operatorname{max-sep}(x(0)) > \operatorname{max-sep}(y(0))$. Find $j \in \omega$ and $A, B \in [Y]$, $A \neq \varnothing$ such that $x(0) = Y(j) \cup A$ and $y(0) = Y(j) \cup B$. It follows by \Cref{4.17} that $b^{\smallfrown} Y(j)\uparrow$ and $a^{\smallfrown} Y(j) \uparrow$ are mixed by $Y$. By \Cref{4.12} part \ref{4.12ii}, $a^{\smallfrown} Y(j) \uparrow$ and $a^{\smallfrown} y(0)$ are mixed by $Y$. Finally, by \Cref{4.12} part \ref{4.12i}, we see that $a^{\smallfrown} y(0)$ and $a^{\smallfrown} (y \upharpoonright k_0)$ are mixed by $Y$. It follows that $b^{\smallfrown} Y(j) \uparrow$ and $a^{\smallfrown} (y \upharpoonright k_0)$ are mixed by $Y$. But now, by \Cref{4.11}, $b^{\smallfrown} Y(j) \uparrow$ is max-separated by $Y$. Since $a^{\smallfrown} (y \upharpoonright k_0)$ is separated in some sense by $Y$, it follows from \Cref{4.18} that $a^{\smallfrown} (y \upharpoonright k_0)$ is max-separated by $Y$. \qedhere

\end{proof}

We now define the parameter function $\gamma : (\hat{\mathcal{F}}\setminus \mathcal{F})|Y \to \{\operatorname{sm}, \operatorname{min-sep},$ $\operatorname{max-sep},$ $\operatorname{minmax-sep}, \operatorname{sss}, \operatorname{vss}\}$:

\begin{definition}\label{parameter}

For $a \in (\hat{\mathcal{F}}\setminus \mathcal{F})|Y$, we let $\gamma(a) = \operatorname{sm}$ if $a$ is strongly mixed by $Y$, $\gamma(a) = \operatorname{min-sep}$ if $a$ is min-separated by $Y$, $\gamma(a) = \operatorname{max-sep}$ if $a$ is max-separated by $Y$, $\gamma(a) = \operatorname{minmax-sep}$ if $a$ is minmax-separated by $Y$, $\gamma(a) = \operatorname{sss}$ if $a$ is still strongly separated by $Y$, and $\gamma(a) = \operatorname{vss}$ if $a$ is very strongly separated by $Y$.

\end{definition}

Note that by \Cref{4.19} and \ref{canon5} of canonical, $\gamma$ is admissible. Let us assume that $\gamma$ is not trivial; i.e., assume that there is $a \in (\hat{\mathcal{F}}\setminus \mathcal{F})|Y$ with $\gamma(a) \neq \operatorname{sm}$. It then follows that $\Gamma_{\gamma}(b) \neq \varnothing$ for all $b \in \mathcal{F}|Y$. $Y \leq X$ and $\gamma$ being defined, we can start the proof of the theorem:

\begin{proof}[Proof of \Cref{canonization}]

Let $\mathcal{F}$ be a front on some $X \in \mathrm{FIN}^{[\infty]}$, and let $g : \mathcal{F} \to \omega$ be a function. Pick a canonical $Y \leq X$ for $g$. The following four claims will finish the proof:

\begin{theorem1} \label{Claim1}

Let $a, a' \in \mathcal{F}|Y$. Take $k \leq |a|$ and $k' \leq |a'|$ with $\Gamma_{\gamma}(a \upharpoonright k) = \Gamma_{\gamma}(a' \upharpoonright k')$, and assume it is not the case that $\gamma(a \upharpoonright l) = \operatorname{sss}$ and $\gamma(a' \upharpoonright l') = \operatorname{vss}$ (or vice versa) where $l\leq k-1$ and $l' \leq k'-1$ are maximal with $\gamma(a \upharpoonright l), \gamma(a' \upharpoonright l') \neq \operatorname{sm}$. Then $a \upharpoonright k$ and $a' \upharpoonright k'$ are mixed by $Y$.

\end{theorem1}

\begin{proof}

Let $a, a' \in \mathcal{F}|Y$, take $k \leq |a|$ and $k' \leq |a'|$ with $\Gamma_{\gamma}(a \upharpoonright k) = \Gamma_{\gamma}(a' \upharpoonright k')$, and assume it is not the case that $\gamma(a \upharpoonright l) = \operatorname{sss}$ and $\gamma(a' \upharpoonright l') = \operatorname{vss}$ (or vice versa) where $l\leq k-1$ and $l' \leq k'-1$ are maximal with $\gamma(a \upharpoonright l), \gamma(a' \upharpoonright l') \neq \operatorname{sm}$. Let us prove the result by induction on $|\Gamma_{\gamma}(a \upharpoonright k)| = |\Gamma_{\gamma}(a' \upharpoonright k')|$.

The case when $|\Gamma_{\gamma}(a \upharpoonright k)| = 0 = |\Gamma_{\gamma}(a' \upharpoonright k')|$ follows by successive applications of \Cref{4.12} part \ref{4.12i}.

Now let us assume that $|\Gamma_{\gamma}(a \upharpoonright k)| = |\Gamma_{\gamma}(a' \upharpoonright k')| > 0$ and the result holds for all $b \upharpoonright l$, $b' \upharpoonright l' \in \hat{\mathcal{F}} | Y$ with $\Gamma_{\gamma}(b \upharpoonright l) = \Gamma_{\gamma}(b' \upharpoonright l')$ and $|\Gamma_{\gamma}(b \upharpoonright l)| < |\Gamma_{\gamma}(a \upharpoonright k)|$. Find maximal $k_0 < k$ and $k_0' < k'$ with $|\Gamma_{\gamma}(a \upharpoonright k_0)| = |\Gamma_{\gamma}(a \upharpoonright k)|-1$ and $|\Gamma_{\gamma}(a' \upharpoonright k_0')| = |\Gamma_{\gamma}(a' \upharpoonright k')|-1$. It follows that $\Gamma_{\gamma}(a \upharpoonright k_0) = \Gamma_{\gamma}(a' \upharpoonright k_0')$, and so the induction hypothesis implies that $a \upharpoonright k_0$ and $a' \upharpoonright k_0'$ are mixed by $Y$. Recall that the images of $\operatorname{min-sep}$, $\operatorname{max-sep}$, $\operatorname{minmax-sep}$ and $\operatorname{sss}=\operatorname{vss}$ on $[Y]$ are disjoint. Let $d=|\Gamma_{\gamma}(a \upharpoonright k)| = |\Gamma_{\gamma}(a' \upharpoonright k')|$. We split into cases.

First, assume that $\Gamma_{\gamma}(a \upharpoonright k)(d-1) = \Gamma_{\gamma}(a' \upharpoonright k')(d-1)$ is in the range of $\operatorname{min-sep}$. Since $k_0$ and $k_0'$ were maximal, it follows that $a \upharpoonright k_0$ and $a' \upharpoonright k_0'$ are both min-separated by $Y$ and $\operatorname{min-sep}(a(k_0)) = \operatorname{min-sep}(a'(k'_0))$. It follows by \Cref{4.16} part \ref{4.16i} that $(a \upharpoonright k_0)^{\smallfrown} a(k_0)$ and $(a' \upharpoonright k_0')^{\smallfrown} a(k_0')$ are mixed by $Y$. Since all of the proper initial segments of $a \upharpoonright k$ extending 
$(a \upharpoonright k_0)^{\smallfrown} a(k_0)$, and all of the proper initial segments of $a' \upharpoonright k'$ extending 
$(a' \upharpoonright k'_0)^{\smallfrown} a'(k'_0)$ are strongly mixed by $Y$, by repeated applications of \Cref{4.12} part \ref{4.12i}, we get that $a \upharpoonright k$ and $a' \upharpoonright k'$ are mixed by $Y$.

If $\Gamma_{\gamma}(a \upharpoonright k)(d-1) = \Gamma_{\gamma}(a' \upharpoonright k')(d-1)$ is in the range of $\operatorname{max-sep}$, then the claim similarly follows by \Cref{4.16} part \ref{4.16ii}.

Assume that $\Gamma_{\gamma}(a \upharpoonright k)(d-1) = \Gamma_{\gamma}(a' \upharpoonright k')(d-1)$ is in the range of $\operatorname{minmax-sep}$. Since $k_0$ and $k_0'$ were maximal we have four alternatives:

If $a \upharpoonright k_0$ and $a' \upharpoonright k_0'$ are both minmax-separated by $Y$, then the proof works similarly to the previous two cases.

Assume that $a \upharpoonright k_0$ is minmax-separated by $Y$ and $a' \upharpoonright k_0'$ is min-separated by $Y$, and the least $k_0' < l' < k'$ with $\gamma(a' \upharpoonright l') \neq \operatorname{sm}$ satisfies $\gamma(a' \upharpoonright l') = \operatorname{max-sep}$. Since $\Gamma_{\gamma}(a \upharpoonright k)(d-1) = \Gamma_{\gamma}(a' \upharpoonright k')(d-1)$, we know that $\operatorname{min-sep}(a(k_0)) = \operatorname{min-sep}(a'(k'_0))$. It follows that $a(k_0) = Y'(j) \cup A$ and $a'(k'_0) = Y'(j) \cup B$ for some $j \in \omega$ and for some $A, B \in [Y'/Y'(j)]$ with $A,B \neq \varnothing$. It follows by \Cref{4.17} that $(a \upharpoonright k_0)^{\smallfrown} Y'(j)\uparrow$ and $(a' \upharpoonright k'_0)^{\smallfrown} Y'(j)\uparrow$ are mixed by $Y'$. Since $a' \upharpoonright k'_0$ is min-separated by $Y'$, by \Cref{4.11} we may conclude that $(a' \upharpoonright k'_0)^{\smallfrown} Y'(j)\uparrow$ is strongly mixed by $Y'$, and it then follows by \Cref{4.12} part \ref{4.12i} that $(a' \upharpoonright k'_0)^{\smallfrown} Y'(j)\uparrow$ and $(a' \upharpoonright k'_0)^{\smallfrown} a'(k_0')$ are mixed by $Y'$. Again by \Cref{4.12} part \ref{4.12i}, we see that $(a' \upharpoonright k'_0)^{\smallfrown} a'(k_0')$ and $a' \upharpoonright l'$ are mixed by $Y'$. Finally, since by \Cref{4.11} $(a \upharpoonright k_0)^{\smallfrown} Y'(j)\uparrow$ is max-separated by $Y'$, $(a \upharpoonright k_0)^{\smallfrown} Y'(j)\uparrow$ and $a' \upharpoonright l'$ being mixed by $Y'$ and the fact that $\operatorname{max-sep}(a(k_0)) = \operatorname{max-sep}(a'(l'))$, by \Cref{4.16} part \ref{4.16ii}, implies that $(a \upharpoonright k_0)^{\smallfrown} a(k_0)$ and $(a' \upharpoonright l')^{\smallfrown} a'(l')$ are mixed by $Y'$. It then follows by successive applications of \Cref{4.12} part \ref{4.12i} that $a \upharpoonright k$ and $a' \upharpoonright k'$ are mixed by $Y'$, and hence also by $Y$.

Now assume that $a \upharpoonright k_0$ is min-separated by $Y$, and the least $k_0 < l < k$ with $\gamma(a \upharpoonright l) \neq \operatorname{sm}$ satisfies $\gamma(a \upharpoonright l) = \operatorname{max-sep}$ and also $a' \upharpoonright k_0'$ is min-separated by $Y$, and the least $k_0' < l' < k'$ with $\gamma(a' \upharpoonright l') \neq \operatorname{sm}$ satisfies $\gamma(a' \upharpoonright l') = \operatorname{max-sep}$. It follows by \Cref{4.16} part \ref{4.16i} that $(a \upharpoonright k_0)^{\smallfrown} a(k_0)$ and $(a' \upharpoonright k_0')^{\smallfrown} a(k_0')$ are mixed by $Y$ and so $a \upharpoonright l$ and $a' \upharpoonright l'$ are mixed by $Y$ as well. Since $\operatorname{max-sep}(a(l)) = \operatorname{max-sep}a'(l'))$, we get the result.

Finally, assume that $\Gamma_{\gamma}(a \upharpoonright k)(d-1) = \Gamma_{\gamma}(a' \upharpoonright k')(d-1)$ is in the range of $\operatorname{sss}=\operatorname{vss}$. Since $k_0$ and $k_0'$ were maximal, it follows that $a \upharpoonright k_0$ and $a' \upharpoonright k_0'$ are both strongly separated by $Y$. We shall consider three cases.

First, assume that both $a \upharpoonright k_0$ and $a' \upharpoonright k_0'$ are very strongly separated by $Y$. By construction it follows that $\Gamma_{\gamma}(a \upharpoonright k)(d-1) = a(k_0)$ and $\Gamma_{\gamma}(a' \upharpoonright k')(d-1) = a(k'_0)$. Similarly to the above arguments, we see that $a \upharpoonright k$ and $(a \upharpoonright k_0)^{\smallfrown} a(k_0)$ as well as $a' \upharpoonright k'$ and $ (a' \upharpoonright k_0')^{\smallfrown} a(k_0')$ are mixed by $Y$. Since $a(k_0) = a'(k_0')$, by \Cref{4.16} part \ref{4.16v} we conclude that $a \upharpoonright k$ and $a' \upharpoonright k'$ are mixed by $Y$.

Second, assume that one of $a \upharpoonright k_0$ and $a' \upharpoonright k_0'$ is still strongly separated by $Y$ and the other is very strongly separated by $Y$. Without loss of generality, we may assume that $a \upharpoonright k_0$ is still strongly separated by $Y$ and $a' \upharpoonright k_0'$ is very strongly separated by $Y$. It follows that $\Gamma_{\gamma}(a \upharpoonright k)(d-1) = \Gamma_{\gamma}(a' \upharpoonright k')(d-1) = a'(k_0')$. By \Cref{4.19}, we define a sequence $\{k_i\}_{i <N}$ as follows: Let $k_i > k_0$ be given and find the least $k_{i+1} > k_i$ with $\gamma(a \upharpoonright k_{i+1}) = \operatorname{sss}$ or $\operatorname{vss}$. If $\gamma(a \upharpoonright k_{i+1}) = \operatorname{vss}$, stop; if $\gamma(a \upharpoonright k_{i+1}) = \operatorname{sss}$, then similarly find the least $k_{i+2} > k_{i+1}$ with  $\gamma(a \upharpoonright k_{i+2}) = \operatorname{sss}$ or $\operatorname{vss}$. At the end, we find an $N<\omega$, and obtain a sequence $(k_i)_{1\leq i \leq N}$ for some $k_N \leq k-1$ (by the assumption on $k$ and $k'$) such that $\gamma(a \upharpoonright k_i) = \operatorname{sss}$ for all $i < N$ and $\gamma(a \upharpoonright k_N) = \operatorname{vss}$. It follows that $\bigcup_{0\leq i \leq N} a(k_i) = a'(k_0')$. \Cref{4.17new} part \ref{4.17newii} implies that $(a \upharpoonright k_0)^{\smallfrown}a(k_0)$ and $(a' \upharpoonright k_0')^{\smallfrown} a(k_0)\uparrow$ are mixed by $Y$. By \Cref{4.12} part \ref{4.12i}, $(a \upharpoonright k_0)^{\smallfrown}a(k_0)$ and $a \upharpoonright k_1$ are mixed by $Y$. It follows that $a \upharpoonright k_1$ and $(a' \upharpoonright k_0')^{\smallfrown} a(k_0)\uparrow$ are also mixed by $Y$. Since $a' \upharpoonright k_0'$ is very strongly separated, $(a' \upharpoonright k_0')^{\smallfrown} a(k_0)\uparrow$ is very strongly separated as well. In general, given $a \upharpoonright k_i$ and $(a' \upharpoonright k_0')^{\smallfrown} \bigcup_{j<i} a(k_j)\uparrow$ mixed by $Y$, we can do the same argument to see that $a \upharpoonright k_{i+1}$ and $(a' \upharpoonright k_0')^{\smallfrown} \bigcup_{j <i+1} a(k_j)\uparrow$ are mixed by $Y$. At the end we will see that $a \upharpoonright k_N$ and $(a' \upharpoonright k_0')^{\smallfrown} (\bigcup_{0\leq i < N} a(k_i))\uparrow$ are mixed by $Y$, and $a \upharpoonright k_N$ and $(a' \upharpoonright k_0')^{\smallfrown} (\bigcup_{0\leq i < N} a(k_i))\uparrow$ are both very strongly separated by $Y$. It follows by \Cref{4.16} part \ref{4.16v} that $(a \upharpoonright k_N)^{\smallfrown} a(k_N)$ and $(a' \upharpoonright k_0')^{\smallfrown} (\bigcup_{0\leq i < N} a(k_i) \cup a(k_N)) = (a' \upharpoonright k_0')^{\smallfrown} a'(k_0')$ are mixed by $Y$, and we get the result by \Cref{4.12} part \ref{4.12i}.

To finish, assume that both $a \upharpoonright k_0$ and $a' \upharpoonright k_0'$ are still strongly separated by $Y$. Repeating the construction from the previous paragraph, we get sequences $(k_i)_{0\leq i \leq N}$ and $(k'_i)_{0\leq i \leq N'}$ for some $k_N \leq k-1$ and $k'_{N'} \leq k'-1$ such that $\gamma(a \upharpoonright k_i) = \operatorname{sss}$ for all $i < N$, and $\gamma(a \upharpoonright k_N) = \operatorname{sss}$ or $\operatorname{vss}$; and similarly $\gamma(a' \upharpoonright k'_i) = \operatorname{sss}$ for all $i < N'$, and $\gamma(a' \upharpoonright k'_{N'}) = \operatorname{sss}$ or $\operatorname{vss}$. Note by the assumption on $k$ and $k'$ that $\gamma(a \upharpoonright k_N) = \operatorname{sss}$ if and only if $\gamma(a' \upharpoonright k'_{N'}) = \operatorname{sss}$. By the assumption, $\bigcup_{0\leq i \leq N} a(k_i) = \bigcup_{0\leq i \leq N'} a'(k'_i)$.

We claim that $(a \upharpoonright k_{N})^{\smallfrown} a(k_N)$ and $(a' \upharpoonright {k'}_{N'})^{\smallfrown} a({k'}_{N'})$ are mixed by $Y$. Indeed, find the least $0\leq i_0 \leq N$ with $a(k_{i_0}) \neq a'(k'_{i_0})$ (if $N = N'$ and $a(k_i) = a'(k'_i)$ for all $i < N$, then it follows from successive applications of \Cref{4.17new} part \ref{4.17newii} that $a \upharpoonright k_N$ and $a' \upharpoonright k'_{N'}$ are mixed by $Y$, and the respective part of \Cref{4.17} or \Cref{4.16} proves the claim). Note that $i_0 < N$. Since $\bigcup_{0\leq i \leq N} a(k_i) = \bigcup_{0\leq i \leq N'} a'(k'_i)$, either $a(k_{i_0})$ is a proper initial segment of $a'({k'}_{i_0})$ or vice versa. Without loss of generality, assume that $a(k_{i_0})$ is a proper initial segment of $a'({k'}_{i_0})$. It follows by \Cref{4.17new} part \ref{4.17newi} that $(a \upharpoonright k_{i_0})^{\smallfrown} a(k_{i_0})$ and $(a' \upharpoonright {k'}_{i_0})^{\smallfrown} a({k}_{i_0})\uparrow$ are mixed by $Y$. Note that $(a' \upharpoonright {k'}_{i_0})^{\smallfrown} a({k}_{i_0})\uparrow$ is still strongly separated by $Y$. Find the least $i_1 > i_0$ with $a'(k'_{i_0}) \subseteq \bigcup_{0\leq i \leq i_1} a(k_i)$. If $a'(k'_{i_0}) = \bigcup_{0\leq i \leq i_1} a(k_i)$, let $v = a'(k'_{i_0}) \setminus a(k_{i_0})$; otherwise let $v = (a'(k'_{i_0}) \setminus a(k_{i_0}))\uparrow$. In either case, $(a \upharpoonright k_{i_0})^{\smallfrown}a(k_{i_0})^{\smallfrown} v$ and $(a' \upharpoonright {k'}_{i_0})^{\smallfrown} a'(k'_{i_0})$ are still strongly separated by $Y$; also $(a \upharpoonright k_{i_0})^{\smallfrown}a(k_{i_0})^{\smallfrown} v$ and $((a' \upharpoonright {k'}_{i_0})^{\smallfrown}a({k}_{i_0})\uparrow)^{\smallfrown}(a'(k'_{i_0}) \setminus a(k_{i_0})) = (a' \upharpoonright {k'}_{i_0})^{\smallfrown} a'(k'_{i_0})$ are mixed by $Y$. Repeating this argument, by $\bigcup_{0\leq i \leq N} a(k_i)$ $= \bigcup_{0\leq i \leq N'} a'(k'_i)$, we without loss of generality see that $(a \upharpoonright k_N)^{\smallfrown} w \uparrow$ and $(a' \upharpoonright {k'}_{N'})$ are mixed by $Y$ for some $a \upharpoonright k_N < w \in [Y]$ with $a(k_N) \setminus w = a'({k'}_{N'})$ (we allow $w = \varnothing$ with the convention ``$\varnothing \uparrow = \varnothing$"). Note that either both $(a \upharpoonright k_N)^{\smallfrown} w \uparrow$ and $(a' \upharpoonright {k'}_{N'})$ are still strongly separated by $Y$ or both $(a \upharpoonright k_N)^{\smallfrown} w \uparrow$ and $(a' \upharpoonright {k'}_{N'})$ are very strongly separated by $Y$. In the former case, since $w \cup a'({k'}_{N'}) = a(k_N)$, we see by \Cref{4.17new} part \ref{4.17newi} that $(a \upharpoonright k_{N})^{\smallfrown} a(k_N)$ and $(a' \upharpoonright {k'}_{N'})^{\smallfrown} a({k'}_{N'})$ are mixed by $Y$. In the latter case, this time by \Cref{4.16} part \ref{4.16v}, we see that $(a \upharpoonright k_{N})^{\smallfrown} a(k_N)$ and $(a' \upharpoonright {k'}_{N'})^{\smallfrown} a({k'}_{N'})$ are mixed by $Y$, and we are done. \qedhere

\end{proof}

\begin{theorem1} \label{Claim2}

Let $a, a' \in \mathcal{F} | Y$. If $\Gamma_{\gamma}(a) = \Gamma_{\gamma}(a')$, then $g(a) = g(a')$.

\end{theorem1}

\begin{proof}

Let $k = |a|$ and take maximal $k_0 <k$ with $\gamma(a \upharpoonright k_0) \neq \operatorname{sm}$. Assume towards a contradiction that $\gamma(a \upharpoonright k_0) = \operatorname{sss}$. It follows that $(a \upharpoonright k_0)^{\smallfrown} a(k_0)$ and $(a \upharpoonright k_0)^{\smallfrown} (a(k_0)\uparrow)$ are mixed. Take $(a \upharpoonright k_0)^{\smallfrown} a(k_0)< x,y$ (possibly empty) with $g((a \upharpoonright k_0)^{\smallfrown} a(k_0)^{\smallfrown}x) = g((a \upharpoonright k_0)^{\smallfrown} (a(k_0)\uparrow)^{\smallfrown}y)$. Since $k_0<k$ were maximal, it follows by \Cref{4.12} part \ref{4.12i} that $(a \upharpoonright k_0)^{\smallfrown}a(k_0)$ and $(a \upharpoonright k_0)^{\smallfrown}a(k_0)^{\smallfrown} x$ are mixed by $Y$. Similarly, $(a \upharpoonright k_0)^{\smallfrown}(a(k_0) \cup y(0))$ and $(a \upharpoonright k_0)^{\smallfrown}(a(k_0)\uparrow)^{\smallfrown} y$ are also mixed by $Y$. Hence, $(a \upharpoonright k_0)^{\smallfrown}a(k_0)$ and $(a \upharpoonright k_0)^{\smallfrown}(a(k_0)\cup y(0))$ are mixed by $Y$, which contradicts the fact that $a \upharpoonright k_0$ is strongly separated by $Y$.

It follows that \Cref{Claim1} is applicable. Therefore, $a = a \upharpoonright k$ and $a' = a' \upharpoonright k'$ are mixed by $Y$. By definition of mixing, since $a, a' \in \mathcal{F} | Y$, it follows that $g(a) = g(a')$. \qedhere

\end{proof}

\begin{theorem1} \label{Claim3}

Let $a,a' \in \mathcal{F}|Y$. Then $\Gamma_{\gamma}(a) \not\sqsubset \Gamma_{\gamma}(a')$.

\end{theorem1}

\begin{proof}

This directly follows by the observation that no $a \in \mathcal{F} | Y$ can be mixed with some $b \in (\hat{\mathcal{F}} \setminus \mathcal{F}) | Y$ which is separated in some sense by $Y$. Indeed, if there is $b <_b y \leq Y$ such that $g(a) = g(b^{\smallfrown}y)$, it follows that $b$ and $b^{\smallfrown}y(0)$ are mixed by $Y$, contradicting the fact that $b$ is separated in some sense by $Y$. \qedhere

\end{proof}

\begin{theorem1} \label{Claim4}

Let $a, a' \in \mathcal{F} | Y$. If $\Gamma_{\gamma}(a) \neq \Gamma_{\gamma}(a')$, then $g(a) \neq g(a')$.

\end{theorem1}

\begin{proof}

Let $\Gamma_{\gamma}(a) \neq \Gamma_{\gamma}(a')$ and assume towards a contradiction that $g(a) = g(a')$. By \Cref{Claim3}, choose $l < |\Gamma_{\gamma}(a)|,|\Gamma_{\gamma}(a')|$ maximal with $\Gamma_{\gamma}(a) \upharpoonright l = \Gamma_{\gamma}(a') \upharpoonright l$ (we allow $l = 0$). Choose maximal $k<|a|$ and $k'<|a'|$ with $\Gamma_{\gamma}(a \upharpoonright k) = \Gamma_{\gamma}(a) \upharpoonright l$ and $\Gamma_{\gamma}(a' \upharpoonright k') = \Gamma_{\gamma}(a') \upharpoonright l$. Note that both $a \upharpoonright k$ and $a' \upharpoonright k'$ are separated in some sense by $Y$. By definition of $\Gamma_{\gamma}$, the hypothesis of \Cref{Claim1} is satisfied for $a \upharpoonright k$ and $a' \upharpoonright k'$. It follows that $a \upharpoonright k$ and $a' \upharpoonright k'$ are mixed by $Y$. By \Cref{4.20}, we only have the following cases:

First, assume that both $a \upharpoonright k$ and $a' \upharpoonright k'$ are min-separated by $Y$. By \Cref{4.16} part \ref{4.16i}, it follows that $\operatorname{min-sep}(a(k)) = \operatorname{min-sep}(a'(k'))$. Since $l$ was chosen to be maximal, without loss of generality there is minimal $k<j$ such that $a \upharpoonright j$ is separated in some sense by $Y$. It follows from the above proof that there is also minimal $k'<j'$ such that $a' \upharpoonright j'$ is separated in some sense by $Y$. By construction of $\Gamma_{\gamma}$, both $a \upharpoonright j$ and $a' \upharpoonright j'$ are max-separated by $Y$. It follows that $a \upharpoonright j$ and $a' \upharpoonright j'$ are mixed by $Y$. By \Cref{4.16} part \ref{4.16ii}, we get $\operatorname{max-sep}(a(j)) = \operatorname{max-sep}(a'(j'))$, which contradicts the maximality of $l$.

The cases when either $a \upharpoonright k$ and $a' \upharpoonright k'$ are both max-separated by $Y$ or are both minmax-separated by $Y$ reach a contradiction by the respective parts of \Cref{4.16}.

Assume that $a \upharpoonright k$ is min-separated by $Y$ and $a' \upharpoonright k'$ is minmax-separated by $Y$. It follows by \Cref{4.17} that $\operatorname{min-sep}(a(k)) = \operatorname{min-sep}(a'(k'))$, and $\operatorname{max-sep}(a(k))$ $<_b\operatorname{max-sep}(a'(k'))$. By \Cref{4.20}, there is minimal $k < j$ such that $a \upharpoonright j$ is separated in some sense by $Y$, and actually $a \upharpoonright j$ is max-separated by $Y$. By the argument in the proof of \Cref{4.20}, we in fact see that $a \upharpoonright j$ and $(a' \upharpoonright k')^{\smallfrown} Y(j_0)\uparrow$ are mixed by $Y$, where $j_0 \in \omega$ satisfies $\operatorname{min-sep}(a'(k')) \in Y(j_0)$. Since $(a' \upharpoonright k')^{\smallfrown} Y(j_0)\uparrow$ is max-separated by $Y$, by \Cref{4.16} part \ref{4.16ii}, it follows that $\operatorname{max-sep}(a(j)) = \operatorname{max-sep}(a'(k'))$, which contradicts the maximality of $l$.

Assume that both $a \upharpoonright k$ and $a' \upharpoonright k'$ are strongly separated by $Y$. By \Cref{4.16} part \ref{4.16iv}, either $a(k)$ is an initial segment of $a'(k')$ or $a'(k')$ is an initial segment of $a(k)$. By \Cref{4.16} part \ref{4.16v}, at least one of  $a \upharpoonright k$ or $a' \upharpoonright k'$ has to be still strongly separated by $Y$.

Assume that $a \upharpoonright k$ is still strongly separated by $Y$ and $a' \upharpoonright k'$ is very strongly separated by $Y$. We cannot have $a(k) = a'(k')$, since it would imply that $(a \upharpoonright k)^{\smallfrown} a(k)$ and $(a' \upharpoonright k')^{\smallfrown} a(k)$ are mixed by $Y$, contradicting \Cref{4.17new} part \ref{4.17newii}. Again by \Cref{4.17new} part \ref{4.17newii}, $a'(k')$ cannot be a proper initial segment of $a(k)$. It follows that $a(k)$ is a proper initial segment of $a'(k')$, and so $(a \upharpoonright k)^{\smallfrown}a(k)$ and $(a' \upharpoonright k')^{\smallfrown}a(k)\uparrow$ are mixed by $Y$. Find minimal $k<j_0$ such that $a \upharpoonright j_0$ is separated in some sense by $Y$. By \Cref{4.19}, $a \upharpoonright j_0$ is strongly separated by $Y$. Note that $(a \upharpoonright k)^{\smallfrown}a(k)$ and $a \upharpoonright j_0$ are mixed by $Y$, and $(a' \upharpoonright k')^{\smallfrown}a(k)\uparrow$ is very strongly separated by $Y$. If $a \upharpoonright j_0$ is very strongly separated by $Y$, then $a(j_0) = a'(k') \setminus a(k)$ by \Cref{4.16} part \ref{4.16v}, and we contradict the maximality of $l$. If $a \upharpoonright j_0$ is still strongly separated by $Y$, then we can similarly find minimal $j_0 < j_1$ such that $a \upharpoonright j_1$ is strongly separated by $Y$. In general, suppose that $j_i$ is given with $a \upharpoonright j_i$ strongly separated by $Y$, $a \upharpoonright j_i$ and $(a' \upharpoonright k')^{\smallfrown}(a(k) \cup \bigcup_{i' <i} a(j_{i'}))$ mixed by $Y$, and $a(k) \cup \bigcup_{i' <i} a(j_{i'}) \sqsubset a'(k')$. If $a \upharpoonright j_i$ is very strongly separated by $Y$, then by the above argument $a'(k') \setminus (a(k) \cup \bigcup_{i' <i} a(j_{i'})) = a(j_i)$, and we contradict the maximality of $l$. If, on the other hand, $a \upharpoonright j_i$ is still strongly separated by $Y$, then we find the least $j_i < j_{i+1}$ such that $a \upharpoonright j_{i+1}$ is separated in some sense by $Y$. By the argument in the base case, $a(j_i)$ is a proper initial segment of $a'(k') \setminus (a(k) \cup \bigcup_{i' <i} a(j_{i'}))$. At the end we get some $N$ with $a \upharpoonright j_N$ and $(a' \upharpoonright k')^{\smallfrown}(a(k) \cup \bigcup_{i' <N} a(j_{i'}))$ mixed by $Y$, and $a \upharpoonright j_N$ is very strongly separated by $Y$. This contradicts the maximality of $l$, as in the previous cases.

Assume that both $a \upharpoonright k$ and $a' \upharpoonright k'$ are still strongly separated by $Y$. By \Cref{4.16} part \ref{4.16iv}, either $a(k)$ is an initial segment of $a'(k')$ or $a'(k')$ is an initial segment of $a(k)$. Let $k < j_0 < j_1 < \ldots < j_N$ increasingly enumerate the natural numbers such that $a \upharpoonright j_i$ is still strongly separated by $Y$ for all $i < N$ and $a \upharpoonright j_N$ is very strongly separated by $Y$. Similarly, let $k' < j'_0 < j'_1 < \ldots < j'_{N'}$ increasingly enumerate the natural numbers such that $a' \upharpoonright j'_i$ is still strongly separated by $Y$ for all $i < N'$ and $a \upharpoonright j'_{N'}$ is very strongly separated by $Y$. If $a(k) = a'(k')$, then we see that $(a \upharpoonright k)^{\smallfrown}a(k)$ and $(a' \upharpoonright k')^{\smallfrown}a'(k')$ are mixed by $Y$, which implies that $a \upharpoonright j_0$ and $a' \upharpoonright j'_0$ are also mixed by $Y$, and the argument will be similar to the next case. Suppose now that $a(k)$ is a proper initial segment of $a'(k')$. Then $(a \upharpoonright k)^{\smallfrown}a(k)$ and $(a' \upharpoonright k')^{\smallfrown}a(k)\uparrow$ are mixed by $Y$, and so $a \upharpoonright j_0$ and $(a' \upharpoonright k')^{\smallfrown}a(k)\uparrow$ are also mixed by $Y$. Note that $(a' \upharpoonright k')^{\smallfrown}a(k)\uparrow$ is still strongly separated by $Y$. It follows that either $a(j_0)$ is an initial segment of $a'(k') \setminus a(k)$, or vice versa.

Observe that, if $a \upharpoonright j_0$ is very strongly separated by $Y$ (i.e., if $N =0$), then $a'(k') \setminus a(k)$ has to be a proper initial segment of $a(j_0)$ (otherwise we contradict \Cref{4.17new} part \ref{4.17newii}). We see that $(a \upharpoonright j_0)^{\smallfrown}a'(k')\uparrow$ and $a' \upharpoonright j'_0$ are mixed by $Y$. Note that $(a \upharpoonright j_0)^{\smallfrown}a'(k')\uparrow$ is very strongly separated by $Y$. If $a' \upharpoonright j'_0$ is very strongly separated by $Y$, then by \Cref{4.16} part \ref{4.16v} $a(j_0) \setminus a'(k') = a'(j'_0)$, and we contradict the maximality of $l$. If, on the other hand, $a' \upharpoonright j'_0$ is still strongly separated by $Y$, then it follows similarly that $a'(j'_0)$ has to be a proper initial segment of $a(j_0) \setminus a'(k')$. At the end of finitely many steps, we reach $a' \upharpoonright j'_{N'}$, which is very strongly separated by $Y$. Also, $(a \upharpoonright j_0)^{\smallfrown} a'(j'_{N'-1})\uparrow$ and $a' \upharpoonright j'_{N'}$ are mixed by $Y$ and we have $a(j_0) \setminus a'(j'_{N'-1}) = a'(j_{N'})$, which again contradicts the maximality of $l$.

Finally, if $a \upharpoonright j_0$ is still strongly separated by $Y$, then we repeat the same argument until we without loss of generality reach $i=N$ and $a \upharpoonright j_N$, which is very strongly separated by $Y$. Then the argument in the preceding paragraph applied to $a \upharpoonright j_N$ contradicts the maximality of $l$. \qedhere

\end{proof}

We now apply \Cref{Claim2} and \Cref{Claim4} to finish the proof of \Cref{canonization}. \qedhere

\end{proof}

\begin{definition1}

Observe that the canonical function $\Gamma_{\gamma}$ we defined on $\mathcal{F}|Y$ is minimal in the following sense: If we take an arbitrary $Z \leq Y$ and define a parameter $\gamma'$ with respect to the separation types by $Z$, for any $a \in \mathcal{F}|Z$, we will have $\Gamma_{\gamma'}(a) = \Gamma_{\gamma}(a)$.

\end{definition1}

We can now obtain \Cref{lefcanon} as a corollary of \Cref{canonization}:

\begin{proof}[Proof of \Cref{lefcanon}]

Let $k \in \omega \setminus \{0\}$ and take a function $f : \mathrm{FIN}^{[k]} \to \omega$. Since $\mathrm{FIN}^{[k]}$ is a front, by \Cref{canonization}, we may find $Y \in \mathrm{FIN}^{[\infty]}$ and $\gamma : \bigcup_{i < k} [Y]^{[i]} \to \{\operatorname{sm}, \operatorname{min-sep}, \operatorname{max-sep}, \operatorname{minmax-sep}, \operatorname{sss}, \operatorname{vss}\}$ such that for all $a,a' \in [Y]^{[k]}$, $f(a) = f(a')$ if and only if $\Gamma_{\gamma}(a) = \Gamma_{\gamma}(a')$. For $a \in [Y]^{[k]}$, define $\mathcal{J}_a = (\gamma(\varnothing), \gamma(a \upharpoonright 1), \ldots, \gamma(a \upharpoonright (k-1)))$. Note that there are only finitely many $\mathcal{J}_a$'s. It follows by \Cref{nashwilliams} that there is a $\mathcal{J}$ and $Y' \leq Y$ such that $\mathcal{J}_a = \mathcal{J}$ for all $a \in [Y']^{[k]}$, and the result follows. \qedhere

\end{proof}

Before we conclude this section, recall that we fixed a stable ordered-union $\mathcal{U}$. The following lemma, which follows the proof of \Cref{4.15}, states that the canonical $Y \leq X$ can be chosen with $[Y] \in \mathcal{U}$ as long as we have $[X] \in \mathcal{U}$ to begin with, and will be used in the next section:

\begin{lemma}\label{remark}

Assume that $\mathcal{F}$ is a front on some $X$ with $[X] \in \mathcal{U}$, and $g : \mathcal{F} \to \omega$ is a function. Then there is a canonical $Y \leq X$ with $[Y] \in \mathcal{U}$. In particular, there is some $Y\leq X$ with $[Y] \in \mathcal{U}$ which satisfies the conclusion of \Cref{canonization}, as well as of \Cref{Claim1}, \Cref{Claim2}, \Cref{Claim3}, and \Cref{Claim4}, which means that $g$ is canonized on $\mathcal{F}|Y$.

\end{lemma}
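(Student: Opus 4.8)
The plan is to revisit the construction in the proof of \Cref{4.15} and observe that every step either invokes \Cref{decide}, \Cref{4.13}, \Cref{hindman}, \Cref{fusion}, or \Cref{nashwilliams}, and each of these can be carried out while staying inside $\mathcal{U}$. The point is that stability of $\mathcal{U}$ gives us exactly the two closure properties needed: by \Cref{prop}\ref{Ramsey} (combined with \Cref{hindman}) we can always shrink a block sequence $[Z]\in\mathcal U$ to a monochromatic $[Z']\in\mathcal U$ for any finite coloring of $[Z]^{[l]}$ (and by \Cref{prop}\ref{NWp}, monochromatic for colorings of Nash–Williams families, which covers the \ref{canon4}--\ref{canon5} steps using \Cref{nashwilliams}), and by \Cref{prop}\ref{sel} (selectivity) we can perform the fusion argument of \Cref{fusion} \emph{inside} $\mathcal U$: whenever the proof of \Cref{fusion} diagonalizes against a sequence of ``density'' shrinkings, each individual shrinking can be chosen in $\mathcal U$, and then selectivity of $\mathcal{U}$ produces a single $[Y']\in\mathcal U$ below all of them (after passing to $Y'\leq^* $ each, which is harmless since $[Y']\in\mathcal U$ and we may further shrink using an element of $\mathcal U$).

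Concretely, I would first restate \Cref{fusion} in a ``$\mathcal U$-version'': if $P$ (resp. $Q$) is hereditary and if the density clause can be strengthened to ``for all $a$ and all $Y$ with $[Y]\in\mathcal U$ there is $Z\leq Y$ with $[Z]\in\mathcal U$ and $P(a\square,Z)$'', then for every $Y$ with $[Y]\in\mathcal U$ there is $Y'\leq Y$ with $[Y']\in\mathcal U$ such that $P(a\square,Z/a)$ holds for all $a\in\hat{\mathcal F}|Y'$ and all $Z\leq Y'$. The proof is the same fusion/diagonalization as the original, with selectivity (\Cref{prop}\ref{sel}) replacing the bare existence of a $\leq^*$-lower bound: enumerate $\mathrm{FIN}^{[<\infty]}$, at stage $n$ shrink within $\mathcal U$ to handle the $n$-th relevant $a$ (using the strengthened density), and at the end apply selectivity to the resulting $\leq^*$-decreasing sequence in $\mathcal U$. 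The analogous statement holds for the two-variable property $Q$.

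Next I would check that each density fact used in the proof of \Cref{4.15} does hold in this strengthened form. \Cref{decide} and \Cref{4.13} reduce to: for all $a,b$ and all $[Y]\in\mathcal U$ there is $[Z]\in\mathcal U$, $Z\leq Y$, deciding $a\square,b\square$ (resp. completely deciding $a\square$) --- this is immediate because ``decides'' is automatically inherited, so any $Z\leq Y$ works, in particular one with $[Z]\in\mathcal U$ (take $Z=Y$). The colorings in the \ref{canon3} step are finite colorings of $[Z]^{[1]}$ or $[Z]^{[2]}$, so \Cref{prop}\ref{Ramsey}/\Cref{hindman} gives a monochromatic refinement in $\mathcal U$. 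The \ref{canon4} and \ref{canon5} steps use \Cref{nashwilliams} to split a Nash–Williams family $\mathcal F_{(a\square)}|Z$; here \Cref{prop}\ref{NWp} gives $[Z']\in\mathcal U$ with $\mathcal F_{(a\square)}|Z'\subseteq\mathcal X$ or disjoint from $\mathcal X$. Feeding all of this into the $\mathcal U$-version of \Cref{fusion} reproduces the chain $A\geq B\geq C\geq Y$ of the original proof with every term a block sequence whose span lies in $\mathcal U$, giving a canonical $Y\leq X$ with $[Y]\in\mathcal U$. Finally, the passage $Y_0\mapsto Y'\mapsto Y$ that groups blocks in triples (to make the images of the separation maps disjoint, as recorded just after \Cref{4.15}) only replaces $Y_0$ by a coarsening $Y'$ with $[Y']\subseteq[Y_0]$, hence $[Y']\in\mathcal U$, and likewise for $Y$; so the concluding $Y$ still lies in $\mathcal U$, and \Cref{Claim1}--\Cref{Claim4} apply verbatim, yielding that $g$ is canonized on $\mathcal F|Y$.

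The main obstacle is purely bookkeeping: verifying that the fusion in \Cref{fusion} genuinely survives the restriction to $\mathcal U$, i.e. that selectivity (\Cref{prop}\ref{sel}) is the right tool and that one does not need a fully generic filter. This is where one must be slightly careful that the ``$Z/a$'' relativizations in the statement of \Cref{fusion} interact correctly with the $\leq^*$-diagonalization provided by selectivity, but since selectivity is exactly designed to diagonalize assignments $a\mapsto [X_a]\in\mathcal U$, no new idea is needed beyond matching up the indexing.
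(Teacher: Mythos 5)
Your overall plan---redo the construction of \Cref{4.15} replacing \Cref{fusion} by selectivity (\Cref{prop} part \ref{sel}), \Cref{hindman} by the Ramsey property (part \ref{Ramsey}), and \Cref{nashwilliams} by the Nash--Williams property (part \ref{NWp})---is exactly the route the paper takes. But there is a genuine gap at the very first step, the one on which everything else is fused: you claim that the $\mathcal{U}$-version of \Cref{decide} (and hence of \Cref{4.13}) is ``immediate because decides is automatically inherited, so any $Z\leq Y$ works, in particular take $Z=Y$.'' That is false. Deciding is inherited \emph{downward once it holds}, but the density clause only asserts that \emph{some} $Z\leq Y$ decides $a\square$ and $b\square$: if $Y$ neither separates the pair nor mixes it (i.e.\ $Y$ fails to separate, but some proper refinement of $Y$ does separate), then $Y$ itself does not decide, and the witnessing $Z$ produced by the definition comes with no control whatsoever on whether $[Z]\in\mathcal{U}$. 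This is precisely the point where stability must be used in its strong form: the set of $Z$ deciding a fixed pair is coanalytic (mixing is a universal statement over all $Z'\leq Z$) and dense below every block sequence, so one applies \Cref{prop} part \ref{infty} to its analytic complement to extract a decider $Z'$ with $[Z']\in\mathcal{U}$; this is how the paper's proof proceeds, and without it your fusion has nothing in $\mathcal{U}$ to diagonalize against. For the same reason, the $\mathcal{U}$-version of complete deciding (condition \ref{canon2}) is not covered by parts \ref{Ramsey} and \ref{NWp} alone: it requires the canonical partition property, \Cref{prop} part \ref{tayp}, applied to the mixing equivalence relation on $[Y/a]$, to obtain the Taylor-homogeneous set inside $\mathcal{U}$.

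A smaller but real slip: in the final step you argue that the triple-grouped $Y'$ satisfies $[Y']\subseteq[Y_0]$, ``hence $[Y']\in\mathcal{U}$.'' Ultrafilters are upward closed, not downward closed, so this inference is backwards, and indeed $[Y']\in\mathcal{U}$ is not automatic for a coarsening. The standard fix is to arrange the disjoint-images condition \emph{first}: the set of $Z$ on which $\operatorname{min-sep}$, $\operatorname{max-sep}$, $\operatorname{minmax-sep}$, and $\operatorname{sss}$ have pairwise disjoint images on $[Z]$ is closed and dense (grouping into triples witnesses density), so \Cref{prop} part \ref{infty} gives such a $Z$ with $[Z]\in\mathcal{U}$, and one then runs the canonization below this $Z$; disjointness of images is inherited by every $Y\leq Z$. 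With the coanalytic-deciding step and part \ref{tayp} added, the remainder of your outline (selectivity in place of \Cref{fusion}, part \ref{Ramsey} for \ref{canon3}, part \ref{NWp} for \ref{canon4} and \ref{canon5}) goes through as in the paper.
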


\begin{proof}

We sketch the argument. One can prove by induction on $\alpha < \omega_1$ that the set of $Z \in \mathrm{FIN}^{[\infty]}$ for which $\mathcal{F}|Z$ is $\alpha$-uniform is a closed set. Moreover, for fixed $a,b \in \mathrm{FIN}^{[<\infty]}$, the set of $Z \in \mathrm{FIN}^{[\infty]}$ which decides $a$ and $b$ is coanalytic. Thus, if we let $\mathcal{A} = \{Z \in \mathrm{FIN}^{[\infty]} : \mathcal{F}|Z \ \text{is uniform and $Z$ decides $a$ and $b$}\}$, by \Cref{prop} part \ref{infty}, there will be some $[Z'] \in \mathcal{U}$ such that $[Z']^{[\infty]} \subseteq \mathcal{A}$ or $[Z']^{[\infty]} \cap \mathcal{A} = \varnothing$. The latter cannot happen by density of the block sequences that decide $a$ and $b$. It follows that, for any $b \in \mathrm{FIN}^{[<\infty]}$, one can find $[Y_0] \in \mathcal{U}$, $Y_0 \leq X$ which decides every $a$ with $\operatorname{max}(a(|a|-1)) \leq \operatorname{max}(b(|b|-1))$. Utilizing \Cref{prop} part \ref{sel} in place of \Cref{fusion}, one can find $[Y_1] \in \mathcal{U}$, $Y_1 \leq X$, which satisfies \ref{canon1} of canonical. Similarly, one can use \Cref{prop} part \ref{tayp} for \ref{canon2} of canonical, part \ref{Ramsey} for \ref{canon3} of canonical, and part \ref{NWp} for \ref{canon4} and \ref{canon5} of canonical to get $[Y] \in \mathcal{U}$ with $Y \leq X$, which is canonical for $g$. \qedhere

\end{proof}

\section{Initial Tukey Structure Below \texorpdfstring{$\mathcal{U}$}{Lg}}\label{5}

This section contains the main results of the paper, \Cref{main1} and \Cref{main2}. We fix a stable ordered-union ultrafilter $\mathcal{U}$ on $\mathrm{FIN}$. Since $\operatorname{min} : \mathrm{FIN} \to \omega$ and $\operatorname{min-sep} : \mathrm{FIN} \to \{\{n\} : n \in \omega\}$, the maps $\operatorname{min-sep}$ and $\operatorname{min}$ are technically different, as well as the other ones. The following is routine to show:

\begin{theorem2}

Assume that the maps $\operatorname{min-sep}$, $\operatorname{max-sep}$, $\operatorname{minmax-sep}$, and $\operatorname{sss}$ have disjoint images on $[X] \in \mathcal{U}$. Then the $\mathrm{RK}$-image $\operatorname{minmax-sep}(\mathcal{U} \upharpoonright [X])$ is an ultrafilter on $[X]^{[2]}$. Moreover, $\operatorname{min-sep}(\mathcal{U} \upharpoonright [X]) \cong \mathcal{U}_{\operatorname{min}}$, $\operatorname{max-sep}(\mathcal{U} \upharpoonright [X]) \cong \mathcal{U}_{\operatorname{max}}$ and $\operatorname{minmax-sep}(\mathcal{U} \upharpoonright [X]) \cong \mathcal{U}_{\operatorname{minmax}}$.

\end{theorem2}

Therefore, from now on, it is justified to use the maps $(\cdot)$ and $(\cdot)\operatorname{-sep}$ interchangeably.

For notational simplicity, define $\mathcal{B} = \{X \in \mathrm{FIN}^{[\infty]} : [X] \in \mathcal{U}\}$ and define $\mathcal{B}^{[<\infty]} = \{a \in \mathrm{FIN}^{[<\infty]} : a \sqsubset X \ \text{for some $X \in \mathcal{B}$}\}$. Also, for $X \in \mathcal{B}$, denote $\mathcal{B} \upharpoonright X = \{X' \in \mathcal{B} : X' \leq X\}$ and $\mathcal{B}^{[<\infty]} \upharpoonright X = \{a \in \mathrm{FIN}^{[<\infty]} : a \sqsubset X' \ \text{for some $X' \in \mathcal{B} \upharpoonright X$}\}$. We will need the following lemma:

\begin{lemma}[Theorem $56$, \cite{DMT}] \label{5.2}

Whenever $\mathcal{V}$ is a nonprincipal ultrafilter on $\omega$ and $f : \mathcal{B} \to \mathcal{V}$ is monotone (meaning that $X \leq Y \Rightarrow f(X) \subseteq f(Y)$) and cofinal, there is $X \in \mathcal{B}$ and a monotone function $\tilde{f} : \mathrm{FIN}^{[\infty]} \to \mathcal{P}(\omega)$ such that

\begin{enumerate}[label=(\roman*)]
    \item $\tilde{f}$ is continuous with respect to the metric topology on $\mathrm{FIN}^{[\infty]}$,
    \item $\tilde{f} \upharpoonright (\mathcal{B} \upharpoonright X) = f \upharpoonright (\mathcal{B} \upharpoonright X)$,
    \item There is $\hat{f} : \mathrm{FIN}^{[<\infty]} \to [\omega]^{<\omega}$ such that; $a \sqsubseteq b \in \mathrm{FIN}^{[<\infty]} \Rightarrow \hat{f}(a) \sqsubseteq \hat{f}(b)$, $a \leq b \in \mathrm{FIN}^{[<\infty]} \Rightarrow \hat{f}(a) \subseteq \hat{f}(b)$, and for all $Y \in \mathrm{FIN}^{[\infty]}$, $\tilde{f}(Y) = \bigcup_{k \in \omega} \hat{f}(r_k(Y))$.
\end{enumerate}

\end{lemma}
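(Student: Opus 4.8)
The plan is to establish this via the method of \cite{DMT} (the argument for maps out of $\mathcal{U}_{\operatorname{minmax}}$ in \cite{dobcont} is the model; compare \Cref{3.4}), adapted to the topological Ramsey space $\mathrm{FIN}^{[\infty]}$. The first step is to replace $f$ by a monotone extension to all of $\mathrm{FIN}^{[\infty]}$: set $f^{+}(Y) = \bigcup\{\, f(W) : W \in \mathcal{B},\ W \leq Y \,\}$. Monotonicity of $f$ gives $f^{+}\upharpoonright \mathcal{B} = f\upharpoonright\mathcal{B}$, and $f^{+}$ is monotone on all of $\mathrm{FIN}^{[\infty]}$ simply because $W \leq Y \leq Y'$ implies $W \leq Y'$. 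It then suffices to produce $X \in \mathcal{B}$ and $\hat f : \mathrm{FIN}^{[<\infty]} \to [\omega]^{<\omega}$ satisfying the two compatibility conditions of (iii) such that the induced map $\tilde f(Y) := \bigcup_{k} \hat f(r_k(Y))$ agrees with $f$ on $\mathcal{B}\upharpoonright X$; the $\leq$-compatibility of $\hat f$ forces $\tilde f$ to be monotone on $\mathrm{FIN}^{[\infty]}$ for the same reason $f^{+}$ is, and the layered form of $\hat f$ is what will give continuity.

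The heart of the argument is a fusion over $i \in \omega$ which, at stage $i$, having fixed $r_i(X) = a$ and a reservoir $R_i \in \mathcal{B}$, appends a block and shrinks $R_i$ so as to \emph{decide}, for each not-yet-decided pair consisting of a suitable finite approximation $a' \sqsubseteq r_{i+1}(X)$ and an integer $n$ below the new block, whether $n \in f(W)$ for all $W \in \mathcal{B}$ refining $a'$ below the reservoir or for none of them. The essential point — and the reason no Ramsey theorem need be applied to the possibly non-definable map $f$ itself — is that this dichotomy is free: by monotonicity of $f$, if one such $W$ omits $n$ then so does every $\mathcal{B}$-refinement of $W$ extending $a'$, so it suffices to search for a single witness and shrink the reservoir below it, and otherwise $n$ is automatically present throughout. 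To keep every reservoir, and hence the diagonal $X$, inside $\mathcal{B}$, i.e.\ to guarantee $[X] \in \mathcal{U}$, the fusion is carried out using the selectivity of $\mathcal{U}$ (\Cref{prop} part \ref{sel}) in place of the bare fusion \Cref{fusion}, exactly as in the proof of \Cref{remark}. One then defines $\hat f(a)$ to be the set of integers decided positively by the time $a$ is reached, truncated below $\max(a(|a|-1))$ so that $\hat f(a) \in [\omega]^{<\omega}$, and extended to finite sequences not below $X$ by the value at their longest initial segment that is below $X$.

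The remaining work is a routine but laborious pair of inductions: that $\hat f$ is $\sqsubseteq$- and $\leq$-compatible, that $\bigcup_k \hat f(r_k(Y)) = f(Y)$ for $Y \in \mathcal{B}\upharpoonright X$ (one inclusion by taking $W = Y$, the other because once $r_k(Y)$ is long enough it decides $n$ and $Y$ itself witnesses the positive alternative), and that $\tilde f$ is continuous everywhere. I expect the main obstacle to be exactly this bookkeeping: agreement with $f$ is naturally controlled along end-extensions ($\sqsubseteq$), whereas monotonicity of $\tilde f$ is what forces the $\leq$-compatibility, and reconciling the two — while transferring the ``for all $W$'' conditions between comparable finite approximations, which again uses \Cref{prop} part \ref{sel} to manufacture a $\mathcal{B}$-refinement from an end-extension witness — together with verifying continuity of $\tilde f$ off of $[X]^{[\infty]}$, is where the care lies. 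The hypotheses that $\mathcal{V}$ is nonprincipal and $f$ is cofinal enter only to ensure that no finite fragment of $Y$ determines all of $f^{+}(Y)$, so that the finitary approximations $\hat f(r_k(Y))$ genuinely grow and $\tilde f$ recovers $f$ on $\mathcal{B}\upharpoonright X$.
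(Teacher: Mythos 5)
This lemma is not proved in the paper at all: it is imported as Theorem 56 of \cite{DMT} (the $\mathrm{FIN}^{[\infty]}$ analogue of the Dobrinen--Todorcevic theorem that monotone cofinal maps on p-points are generated by finitary maps), and your plan is essentially a reconstruction of that standard argument: extend $f$ monotonically, run a fusion which, for each finite approximation $a'$ and each small $n$, settles the dichotomy ``some $W \in \mathcal{B}$ end-extending $a'$ below the reservoir omits $n$ / all of them contain $n$'' (monotonicity of $f$ making the negative alternative hereditary under further shrinking), keep the diagonal inside $\mathcal{B}$ by using \Cref{prop} part \ref{sel} in place of the bare \Cref{fusion}, as in \Cref{remark}, and read $\hat{f}$ off the positive decisions with a truncation. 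So the architecture is the intended one.

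Two concrete points would fail as literally written. First, the pairs decided at stage $i+1$ must range over all $a' \in [r_{i+1}(X)]^{[<\infty]}$, not only over initial segments $a' \sqsubseteq r_{i+1}(X)$: a set $Y \in \mathcal{B} \upharpoonright X$ satisfies $r_k(Y) \le X$ but in general $r_k(Y) \not\sqsubseteq X$, so deciding only along $\sqsubseteq$-initial segments of $X$ gives no control over $f(Y)$ for such $Y$; your later remark about transferring ``for all $W$'' conditions between comparable approximations suggests you intend the larger family, but the fusion as described is too small. Second, your extension of $\hat{f}$ to sequences not below $X$ (``the value at the longest initial segment below $X$'') violates clause (iii): take $b = (b(0), b(1), b(2))$ with $b(0), b(2) \in [X]$ but $b(1) \notin [X]$, and $a = (b(0), b(2))$; then $a \le b$, your rule gives $\hat{f}(b) = \hat{f}((b(0)))$, which is in general a \emph{proper} initial segment of $\hat{f}(a)$, so the required inclusion $\hat{f}(a) \subseteq \hat{f}(b)$ fails. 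A standard repair is to set $\hat{f}(b) = \bigcup\{\hat{f}(a) : a \le b,\ a \in [X]^{[<\infty]}\}$ and then recheck the end-extension clause, which is exactly where the stage-wise truncation bounds are needed; this belongs to the bookkeeping you defer, so it is fixable, but the fix is not the one you wrote. Finally, nonprincipality of $\mathcal{V}$ and cofinality of $f$ are not what makes $\tilde{f}$ recover $f$ on $\mathcal{B} \upharpoonright X$: that agreement follows from monotonicity together with the decisions, with $Y$ itself serving as the witness in both directions; those hypotheses are simply part of the ambient setting.
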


Define the following class of ultrafilters on $\omega$ for $\alpha < \omega_1$:

\begin{enumerate}[label=(\roman*)]
    \item $\mathcal{C}_0 = \{\mathcal{U}, \mathcal{U}_{\operatorname{min}}, \mathcal{U}_{\operatorname{max}}, \mathcal{U}_{\operatorname{minmax}}\}$,
    \item $\mathcal{C}_{\alpha+1} = \{\lim_{x \to \mathcal{V}} \mathcal{W}_x : \mathcal{V} \in \mathcal{C}_0, \text{and each} \ \mathcal{W}_x \in \mathcal{C}_{\alpha}\}$.
    \item $\mathcal{C}_{\gamma} = \bigcup_{\alpha < \gamma} \mathcal{C}_{\alpha}$ for limit $\gamma$,
    \item $C_{\omega_1} = \bigcup_{\alpha < \omega_1} \mathcal{C}_{\alpha}$.
\end{enumerate}

Before stating \Cref{main1}, let us note that given a front $\mathcal{F}$ on some $X \in \mathcal{B}$, we may construct a new ultrafilter on the base set $\mathcal{F}$ in the following way:

\begin{definition}\label{ufonfront}

Let $\mathcal{F}$ be a front on $X$. We define $\mathcal{U} \upharpoonright \mathcal{F} = \langle \{\mathcal{F}|Y : Y \leq X, [Y] \in \mathcal{U}\} \rangle$.

\end{definition}

\noindent By \Cref{prop} part \ref{NWp}, $\mathcal{U} \upharpoonright \mathcal{F}$ is an ultrafilter on the base set $\mathcal{F}$.

We will build on the technique Todorcevic developed in \cite{diltod} to prove that Ramsey ultrafilters are Tukey minimal, which was outlined in \cite{dense}, and was extended to other ultrafilters in \cite{r1}, \cite{r2}, \cite{dobell}, and \cite{DMT}.

\begin{theorem} \label{main1}

Let $\mathcal{V}$ be a nonprincipal ultrafilter on a countable index set $I$ with $\mathcal{U} \geq_T \mathcal{V}$. Then $\mathcal{V}$ is isomorphic to an ultrafilter from the class $\mathcal{C}_{\omega_1}$.

\end{theorem}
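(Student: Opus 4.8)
The plan is to show that every nonprincipal $\mathcal{V} \leq_T \mathcal{U}$ is isomorphic to an ultrafilter in $\mathcal{C}_{\omega_1}$ by analyzing a monotone cofinal map $f : \mathcal{B} \to \mathcal{V}$ through its finitary representative $\hat f$, and then reading off a front on which $\mathcal{V}$ is realized as an RK-image of $\mathcal{U}\upharpoonright\mathcal{F}$. First I would reduce to the case $\mathcal{V}$ on $\omega$ via \Cref{2.3} (getting a monotone cofinal $f:\mathcal{U}\to\mathcal{V}$, hence $f:\mathcal{B}\to\mathcal{V}$ after restricting to the cofinal set $\mathcal{B}$) and apply \Cref{5.2} to obtain $X\in\mathcal{B}$, a continuous monotone $\tilde f$ agreeing with $f$ on $\mathcal{B}\upharpoonright X$, and a finitary $\hat f:\mathrm{FIN}^{[<\infty]}\to[\omega]^{<\omega}$ with $\tilde f(Y)=\bigcup_k \hat f(r_k(Y))$. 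The key move is: for each $n\in\omega$, the set $\mathcal{F}_n = \{a\in\mathrm{FIN}^{[<\infty]}\upharpoonright X : n\in\hat f(a) \text{ and } n\notin\hat f(a\upharpoonright(|a|-1))\}$ is Nash-Williams on $X$ (by monotonicity of $\hat f$ along $\sqsubseteq$), so by \Cref{prop}\ref{NWp} and \Cref{prop}\ref{sel} — taking a diagonal over all $n$ — we may shrink $X$ so that for every $n$, either $\mathcal{F}_n\upharpoonright X$ is empty or it is a genuine front on $X$; in the cofinal case (by cofinality of $f$, for $\mathcal{U}$-many $X'$ each $n\in f(X')$, so actually a large set of $n$ give nonempty fronts) one also arranges, by stability/selectivity, that all these fronts are dominated by a single front $\mathcal{F}$ on $X$, i.e. each $a\in\mathcal{F}_n$ has $a\sqsubseteq b$ for some $b\in\mathcal{F}$.

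Having fixed such a front $\mathcal{F}$ on $X$, I would define $h:\mathcal{F}\to\mathcal{P}(\omega)$ by $h(b) = \{n : (\exists a\sqsubseteq b)\ a\in\mathcal{F}_n\}=\hat f(b)$, and observe that $\tilde f(Y) = \bigcup\{h(b): b\in\mathcal{F}|Y\}$ for every $Y\leq X$, so $f$ factors as: $Y\mapsto \mathcal{F}|Y \mapsto$ (union of $h$ over it). This exhibits $\mathcal{V}$ as generated by $\{\bigcup_{b\in\mathcal{F}|Y} h(b) : [Y]\in\mathcal{U}\}$, i.e. $\mathcal{V}$ is (up to the obvious identification) an RK-image of $\mathcal{U}\upharpoonright\mathcal{F}$ under the map sending $b$ to... — and here is where the canonization enters. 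Apply \Cref{remark} (the $\mathcal{U}$-version of \Cref{canonization}) to each of the finitely-many-behaviour coordinate functions extracted from $h$ — more precisely, to the function $g$ on $\mathcal{F}$ encoding, for $b\in\mathcal{F}$, the "new" elements of $\hat f(b)$ not already in $\hat f$ of the predecessor in the $<_{lex}$-ordering of $\mathcal{F}$ — to get $Y\leq X$, $[Y]\in\mathcal{U}$, and an admissible $\gamma$ on $(\hat{\mathcal{F}}\setminus\mathcal{F})|Y$ with $g(b)=g(b')\iff \Gamma_\gamma(b)=\Gamma_\gamma(b')$. Then $\mathcal{V}\upharpoonright X$ is isomorphic to the RK-image $\Gamma_\gamma(\mathcal{U}\upharpoonright(\mathcal{F}|Y))$.

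The final step is a structural induction on the uniformity rank $\alpha$ of $\mathcal{F}|Y$ (using \Cref{uniform} to assume $\mathcal{F}|Y$ is $\alpha$-uniform) showing that $\Gamma_\gamma(\mathcal{U}\upharpoonright(\mathcal{F}|Y))\in\mathcal{C}_{\omega_1}$. For $\alpha=0$ the front is $\{\varnothing\}$ and $\mathcal{V}$ is principal, excluded. For the successor/limit step, one decomposes along the first block: by admissibility and \Cref{parameter}, $\gamma(\varnothing)\in\{\operatorname{sm},\operatorname{min-sep},\operatorname{max-sep},\operatorname{minmax-sep},\operatorname{sss},\operatorname{vss}\}$ determines how $X(0)$ contributes to $\Gamma_\gamma$; this writes $\Gamma_\gamma(\mathcal{U}\upharpoonright(\mathcal{F}|Y))$ as a Fubini limit $\lim_{x\to\mathcal{V}_0}\mathcal{W}_x$ where $\mathcal{V}_0$ is the image of $\mathcal{U}$ under whichever of $\operatorname{sm},\operatorname{min},\operatorname{max},\operatorname{minmax},\operatorname{id}$ is encoded by $\gamma(\varnothing)$ — so $\mathcal{V}_0\in\mathcal{C}_0$, using \Cref{prop}\ref{tayp}, noting $\operatorname{id}(\mathcal{U})\cong\mathcal{U}$ and $\operatorname{sm}(\mathcal{U})$ is principal hence absorbed — and each $\mathcal{W}_x$ is of the form $\Gamma_{\gamma_{(x)}}(\mathcal{U}\upharpoonright(\mathcal{F}_{(x)}|Y))$ with $\mathcal{F}_{(x)}$ of strictly smaller uniformity rank, so $\mathcal{W}_x\in\mathcal{C}_{<\alpha}\subseteq\mathcal{C}_{\omega_1}$ by induction; hence the whole ultrafilter lies in $\mathcal{C}_{\alpha'+1}$ for suitable $\alpha'$, i.e. in $\mathcal{C}_{\omega_1}$. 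The main obstacle I expect is the bookkeeping in this last inductive decomposition: handling the $\operatorname{sss}/\operatorname{vss}$ coordinates (where a single coordinate of $\Gamma_\gamma$ may be assembled from a block of consecutive $X(j)$'s, governed by the $k_i$'s of \Cref{gamma}) so that the Fubini decomposition is clean, and verifying that the resulting index ultrafilter at each stage is genuinely one of the four in $\mathcal{C}_0$ rather than some uncontrolled quotient — this is exactly where admissibility of $\gamma$ and the disjoint-images arrangement of $Y$ (from the paragraph after \Cref{4.15}) are used.
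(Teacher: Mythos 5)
Your overall skeleton (monotone cofinal map, finitization via \Cref{5.2}, a front, canonization, then a Fubini-type induction) is the paper's, but two of your intermediate steps do not work as stated. First, the ``dominating front'' step is impossible for nonprincipal $\mathcal{V}$. Along any $Y\leq X$ with $[Y]\in\mathcal{U}$ the set $f([Y])=\bigcup_k\hat f(r_k(Y))$ is infinite while each $\hat f(r_k(Y))$ is finite, so infinitely many $n$ acquire their $\sqsubseteq$-minimal witnesses at initial segments $r_k(Y)$ of unbounded length $k$; if $\mathcal{F}$ were a front and $b_Y$ its unique member with $b_Y\sqsubset Y$, then any such witness $a=r_k(Y)$ with $k>|b_Y|$ satisfying $a\sqsubseteq b$ for some $b\in\mathcal{F}$ would give $b_Y\sqsubset b$ with both in $\mathcal{F}$, contradicting the Nash-Williams property. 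Consequently the identity $\tilde f(Y)=\bigcup\{\hat f(b):b\in\mathcal{F}|Y\}$ you are trying to secure is unattainable --- and also unnecessary. The paper instead takes the single front $\mathcal{F}=\{a:\hat f(a)\neq\varnothing,\ \hat f(a\upharpoonright(|a|-1))=\varnothing\}$ (after one use of \Cref{prop} part \ref{infty} to make it a front on some $X'$), defines the point map $g(a)=\operatorname{min}(\hat f(a))$, and uses only the one-sided inclusion $g[\mathcal{F}|Y]\subseteq f([Y])$: since $g(\mathcal{U}\upharpoonright\mathcal{F})$ and $\mathcal{V}$ are both ultrafilters, this already forces $g(\mathcal{U}\upharpoonright\mathcal{F})=\mathcal{V}$. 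Relatedly, your RK map is never actually defined: ``new elements relative to the $<_{lex}$-predecessor'' is set-valued, depends on the ambient front, and does not commute with restriction to $\mathcal{F}|Y$, so it does not exhibit $\mathcal{V}$ as an RK-image of $\mathcal{U}\upharpoonright\mathcal{F}$.

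Second, the concluding induction is more than bookkeeping, and your first-block decomposition misreads $\Gamma_\gamma$. By \Cref{gamma}, the first coordinate of $\Gamma_\gamma$ is not governed by $\gamma(\varnothing)$ alone: if $\gamma(\varnothing)=\operatorname{min-sep}$ and the first later non-$\operatorname{sm}$ value is $\operatorname{max-sep}$, the min- and max-blocks are fused and the index ultrafilter is $\mathcal{U}_{\operatorname{minmax}}$, not $\mathcal{U}_{\operatorname{min}}$; if $\gamma(\varnothing)=\operatorname{sss}$, the first coordinate is a union of all blocks up to the first $\operatorname{vss}$ and the index ultrafilter is isomorphic to $\mathcal{U}$; and $\operatorname{sm}$-blocks are skipped entirely, so ``decomposing along the first block of the front'' is not a Fubini decomposition of the image ultrafilter. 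The paper resolves exactly this by working not with the front but with the well-founded tree $\hat{\mathcal{S}}$ of initial segments of $\Gamma_\gamma$-values: at each non-maximal node $\sigma$ it proves, using \Cref{prop} parts \ref{Ramsey} and \ref{NWp}, that the next-coordinate ultrafilter $\mathcal{W}_\sigma$ is isomorphic to one of $\mathcal{U},\mathcal{U}_{\operatorname{min}},\mathcal{U}_{\operatorname{max}},\mathcal{U}_{\operatorname{minmax}}$ (the delicate cases being precisely min-then-max and $\operatorname{sss}$-to-$\operatorname{vss}$), identifies $\mathcal{V}$ with the ultrafilter of $\vec{\mathcal{W}}$-trees, and then inducts on the rank of $\hat{\mathcal{S}}$ rather than on the uniformity rank of $\mathcal{F}$. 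What you list as the ``main obstacle'' is thus the essential content that still needs proof in your outline.
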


\begin{proof}

Let $\mathcal{V}$ be a nonprincipal ultrafilter, without loss of generality on $\omega$, such that $\mathcal{U} \geq_T \mathcal{V}$. By \Cref{2.3}, there is a monotone and cofinal $f : \mathcal{U} \to \mathcal{V}$. Define $f \upharpoonright \mathcal{B} : \mathcal{B} \to \mathcal{V}$ by $(f \upharpoonright \mathcal{B})(X) = f'([X])$. By \Cref{5.2}, find $X \in \mathcal{B}$ and $\hat{f} : {\mathrm{FIN}}^{[<\infty]} \to {[\omega]}^{< \omega}$ such that $a \sqsubseteq b \Rightarrow \hat{f}(a) \sqsubseteq \hat{f}(b)$, $a \subseteq b \Rightarrow \hat{f}(a) \subseteq \hat{f}(b)$, and for all $X' \in \mathcal{B} \upharpoonright X$, $(f \upharpoonright \mathcal{B})(X') = \bigcup_{n \in \omega} \hat{f}(r_n(X'))$. By \Cref{prop} part \ref{infty}, we can find $X' \in \mathcal{B} \upharpoonright X$ such that for all $Y \leq X'$, there is $a \in [Y]^{[<\infty]}$ with $\hat{f}(a) \neq \varnothing$.

Define $\mathcal{F} = \{a \in {\mathcal{B}}^{[<\infty]} \upharpoonright X' : \hat{f}(a) \neq \varnothing \ \text{and} \ \hat{f}(a \upharpoonright (|a| - 1)) = \varnothing\}$. By the minimality condition and the choice of $X'$, $\mathcal{F}$ is a front on $X'$. Without loss of generality we may assume that $\mathcal{F}$ is a uniform front on $X'$. Consider the map $g : \mathcal{F} | X' \to \omega$ given by $g(a) = \operatorname{min}(\hat{f}(a))$ for all $a \in \mathcal{F}$. The following is straightforward to show:

\begin{theorem1}
$g(\mathcal{U} \upharpoonright (\mathcal{F} | X')) \cong g(\mathcal{U} \upharpoonright \mathcal{F}) = \mathcal{V}$.
\end{theorem1}

By \Cref{remark}, we may take canonical $Y \in \mathcal{B} \upharpoonright X'$ which witnesses the conclusion of \Cref{canonization}, along with all of the results in \Cref{3}. Let $\gamma : (\hat{\mathcal{F}} \setminus \mathcal{F})|Y \to \{\operatorname{sm}, \operatorname{min-sep}, \operatorname{max-sep}, \operatorname{minmax-sep}, \operatorname{sss}, \operatorname{vss}\}$ be the corresponding parameter function (\Cref{parameter}).

Note that $\mathcal{U} \upharpoonright (\mathcal{F} | Y) \cong \mathcal{U} \upharpoonright (\mathcal{F} | X')$, hence $g(\mathcal{U} \upharpoonright (\mathcal{F} | Y)) \cong \mathcal{V}$. The rest of the proof will contain similar arguments to the related results in \cite{r1}, in \cite{r2}, or in \cite{DMT}. Define $$\mathcal{S} = \{\Gamma_{\gamma}(a) : a \in \mathcal{F}|Y\}.$$ Let $\mathcal{W}$ denote the $\mathrm{RK}$-image $\Gamma_{\gamma}(\mathcal{U} \upharpoonright (\mathcal{F}|Y))$, an ultrafilter on the base set $\mathcal{S}$.

\begin{theorem1}

$\mathcal{W} \cong g(\mathcal{U} \upharpoonright (\mathcal{F} | Y))$.

\end{theorem1}

\begin{proof}

Define $\varphi : \mathcal{S} \to \omega$ by $\varphi(\Gamma_{\gamma}(a)) = g(a)$, for each $a \in \mathcal{F} | Y$. Since $\Gamma_{\gamma}(a) = \Gamma_{\gamma}(a')$ if and only if $g(a) = g(a')$ for all $a,a' \in \mathcal{F} | Y$, $\varphi$ is well-defined and injective. Thus it suffices to show that $\varphi(\mathcal{W}) = g(\mathcal{U} \upharpoonright (\mathcal{F}|Y))$.

Indeed, let $Y' \in \mathcal{B} \upharpoonright Y$ be arbitrary. Then $\varphi(\Gamma_{\gamma}[\mathcal{F}|Y']) = g[\mathcal{F}|Y']$. Note that by definition of $\mathrm{RK}$-image (\Cref{rk}), $\{\Gamma_{\gamma}[\mathcal{F}|Y'] : Y' \in \mathcal{B} \upharpoonright Y\}$ is cofinal in $\Gamma_{\gamma}(\mathcal{U} \upharpoonright (\mathcal{F}|Y)) = \mathcal{W}$. Therefore, $\varphi$ sends a cofinal subset of $\mathcal{W}$ to a cofinal subset of $g(\mathcal{U} \upharpoonright (\mathcal{F}|Y))$, and we are done. \qedhere

\end{proof}

Let us assume that $\Gamma_{\gamma}$ is not constant, i.e., $\gamma$ is not the constant $\operatorname{sm}$-map; otherwise $\mathcal{W}$ is a principal ultrafilter. Now we classify $\mathcal{W}$. Let $$\hat{\mathcal{S}} = \{\Gamma_{\gamma}(a \upharpoonright k) : a \in \mathcal{F}|Y \land (k=|a| \lor |\Gamma_{\gamma}(a \upharpoonright k)| < |\Gamma_{\gamma}(a \upharpoonright (k+1))|)\}.$$

Since $\mathcal{F}|Y$ is a front on $Y$, $\hat{\mathcal{F}}|Y$ has no infinite ascending sequences with respect to $\sqsubset$. The following is a straightforward consequence of this fact and the construction of $\Gamma_{\gamma}$:

\begin{lemma} \label{5.6}

There is no $(\sigma_n)_{n \in \omega} \subseteq \hat{\mathcal{S}}$ such that $\sigma_i \sqsubset \sigma_{i+1}$ for all $i \in \omega$. 

\end{lemma}

For $\sigma \in \hat{\mathcal{S}} \setminus \mathcal{S}$, define the set $\mathcal{I}_{\sigma} = \{a \upharpoonright k :a \in \mathcal{F}|Y, \ \text{$k$ is maximal with}$ $\Gamma_{\gamma}(a \upharpoonright k) = \sigma\}$. Since $\sigma \notin \mathcal{S}$, by \Cref{Claim3}, any $a \upharpoonright k \in \mathcal{I}_{\sigma}$ is separated in some sense by $Y$.

For $\sigma \in \hat{\mathcal{S}} \setminus \mathcal{S}$, fix some $a_{\sigma} \upharpoonright k_{\sigma} \in \mathcal{I}_{\sigma}$. Set $l_{\sigma} = |\Gamma_{\gamma}(a_{\sigma} \upharpoonright k_{\sigma})|$. We define the following filter: $$\mathcal{W}_{\sigma} = \langle \{\Gamma_{\gamma}((a_{\sigma} \upharpoonright k_{\sigma})^{\smallfrown} x)(l_{\sigma}) : x \in \mathcal{F}_{(a_{\sigma} \upharpoonright k_{\sigma})}|Z\} : Z \in \mathcal{B} \upharpoonright (Y/(a_{\sigma} \upharpoonright k_{\sigma})) \rangle.$$ 

\begin{lemma}

For all $\sigma \in \hat{\mathcal{S}} \setminus \mathcal{S}$, $\mathcal{W}_{\sigma}$ is an ultrafilter isomorphic to an ultrafilter from the set $\{\mathcal{U}, \mathcal{U}_{\operatorname{min}}, \mathcal{U}_{\operatorname{max}}, \mathcal{U}_{\operatorname{minmax}}\}$.

\end{lemma}

\begin{proof}

Let $\sigma \in \hat{\mathcal{S}} \setminus \mathcal{S}$.

First, assume that $a_{\sigma} \upharpoonright k_{\sigma}$ is max-separated by $Y$. For all $Z \in \mathcal{B} \upharpoonright (Y/(a_{\sigma} \upharpoonright k_{\sigma}))$, it follows that $\{\Gamma_{\gamma}((a_{\sigma} \upharpoonright k_{\sigma})^{\smallfrown} x)(l_{\sigma}) : x \in \mathcal{F}_{(a_{\sigma} \upharpoonright k_{\sigma})}|Z\} = \operatorname{max-sep}''[Z]$, and the result follows.

Assume that $a_{\sigma} \upharpoonright k_{\sigma}$ is minmax-separated by $Y$. For all $Z \in \mathcal{B} \upharpoonright (Y/(a_{\sigma} \upharpoonright k_{\sigma}))$, it follows that $\{\Gamma_{\gamma}((a_{\sigma} \upharpoonright k_{\sigma})^{\smallfrown} x)(l_{\sigma}) : x \in \mathcal{F}_{(a_{\sigma} \upharpoonright k_{\sigma})}|Z\} = \operatorname{minmax-sep}''[Z]$, and the result follows.

Assume that $a_{\sigma} \upharpoonright k_{\sigma}$ is min-separated by $Y$ and suppose it is not the case that the least $k_{\sigma} < k < |a_{\sigma}|$ such that $a_{\sigma} \upharpoonright k$ is separated in some sense by $Y$ is max-separated by $Y$. For all $Z \in \mathcal{B} \upharpoonright (Y/(a_{\sigma} \upharpoonright k_{\sigma}))$, it follows that $\{\Gamma_{\gamma}((a_{\sigma} \upharpoonright k_{\sigma})^{\smallfrown} x)(l_{\sigma}) : x \in \mathcal{F}_{(a_{\sigma} \upharpoonright k_{\sigma})}|Z\} = \operatorname{min-sep}''[Z]$, and the result follows.

Assume that $a_{\sigma} \upharpoonright k_{\sigma}$ is min-separated by $Y$, and suppose there is least $k_{\sigma} < k < |a_{\sigma}|$ such that $a_{\sigma} \upharpoonright k$ is separated in some sense by $Y$, and moreover $a_{\sigma} \upharpoonright k$ is max-separated by $Y$. For all $Z \in \mathcal{B} \upharpoonright (Y/(a_{\sigma} \upharpoonright k_{\sigma}))$, it follows that $\{\Gamma_{\gamma}((a_{\sigma} \upharpoonright k_{\sigma})^{\smallfrown} x)(l_{\sigma}) : x \in \mathcal{F}_{(a_{\sigma} \upharpoonright k_{\sigma})}|Z\} \subseteq \operatorname{minmax-sep}''[Z]$. Conversely, let $Z \in \mathcal{B} \upharpoonright (Y/(a_{\sigma} \upharpoonright k_{\sigma}))$. Define $c : [Z] \to 2$ by $c(s) = 0$ if and only if there is $x \in \mathcal{F}_{(a_{\sigma} \upharpoonright k_{\sigma})} | Z$ such that $\Gamma_{\gamma}((a_{\sigma} \upharpoonright k_{\sigma})^{\smallfrown} x)(l_{\sigma}) = \operatorname{minmax-sep}(s)$, for all $s \in [Z]$. By \Cref{prop} part \ref{Ramsey}, find $Z' \in \mathcal{B} \upharpoonright Z$ such that $c \upharpoonright [Z']$ is constant. Take any $y \in \mathcal{F}_{(a_{\sigma} \upharpoonright k_{\sigma})} | Z'$. Let $a' = (a_{\sigma} \upharpoonright k_{\sigma})^{\smallfrown} y$. Find the least $k_{\sigma} < j < |a'|$ such that $a' \upharpoonright j$ is separated in some sense by $Y$. It follows that $\gamma(a' \upharpoonright j)=\operatorname{max-sep}$. But then $\Gamma_{\gamma}((a_{\sigma} \upharpoonright k_{\sigma})^{\smallfrown} y)(l_{\sigma}) = \operatorname{minmax-sep}(y(0) \cup y(j-k_{\sigma}))$. It follows that $c(y(0) \cup y(j-k_{\sigma})) = 0$, and so $c[Z'] = \{0\}$. But then $\operatorname{minmax-sep}''[Z'] \subseteq \{\Gamma_{\gamma}((a_{\sigma} \upharpoonright k_{\sigma})^{\smallfrown} x)(l_{\sigma}) : x \in \mathcal{F}_{(a_{\sigma} \upharpoonright k_{\sigma})}|Z\}$. Therefore, $\mathcal{W}_{\sigma}$ is cofinal in $\langle \operatorname{minmax-sep}''[Z] : Z \in \mathcal{B} \upharpoonright (Y/(a_{\sigma} \upharpoonright k_{\sigma})) \rangle$ and $\langle \operatorname{minmax-sep}''[Z] : Z \in \mathcal{B} \upharpoonright (Y/(a_{\sigma} \upharpoonright k_{\sigma})) \rangle$ is cofinal in $\mathcal{W}_{\sigma}$, and the result follows.

Assume that $a_{\sigma} \upharpoonright k_{\sigma}$ is strongly separated by $Y$. For all $Z \in \mathcal{B} \upharpoonright (Y/(a_{\sigma} \upharpoonright k_{\sigma}))$, it follows that $\{\Gamma_{\gamma}((a_{\sigma} \upharpoonright k_{\sigma})^{\smallfrown} x)(l_{\sigma}) : x \in \mathcal{F}_{(a_{\sigma} \upharpoonright k_{\sigma})}|Z\} \subseteq [Z]$. Conversely, let $Z \in \mathcal{B} \upharpoonright (Y/(a_{\sigma} \upharpoonright k_{\sigma}))$. Define $c : [Z] \to 2$ by $c(s) = 0$ if and only if there is $x \in \mathcal{F}_{(a_{\sigma} \upharpoonright k_{\sigma})} | Z$ such that $\Gamma_{\gamma}((a_{\sigma} \upharpoonright k_{\sigma})^{\smallfrown} x)(l_{\sigma}) = s$, for all $s \in [Z]$. Take any $y \in \mathcal{F}_{(a_{\sigma} \upharpoonright k_{\sigma})} | Z'$. Let $a' = (a_{\sigma} \upharpoonright k_{\sigma})^{\smallfrown} y$. Let $k_{\sigma} < j_0 < \ldots < j_k < k_{\sigma} + |y|$ increasingly enumerate those $j$'s such that $a' \upharpoonright j$ is strongly separated by $Y$. It follows that $\gamma(a' \upharpoonright j_i) = \operatorname{sss}$ for all $i < k$ and $\gamma(a' \upharpoonright j_k) = \operatorname{vss}$. But then $\Gamma_{\gamma}((a_{\sigma} \upharpoonright k_{\sigma})^{\smallfrown} y)(l_{\sigma}) = \bigcup_{k_{\sigma} \leq i \leq j_k} y(i-k_{\sigma})$. Thus, $c(\bigcup_{k_{\sigma} \leq i \leq j_k} y(i-k_{\sigma})) = 0$ and so $c[Z'] = \{0\}$. We see that $[Z'] \subseteq \{\Gamma_{\gamma}((a_{\sigma} \upharpoonright k_{\sigma})^{\smallfrown} x)(l_{\sigma}) : x \in \mathcal{F}_{(a_{\sigma} \upharpoonright k_{\sigma})}|Z\}$, and the result follows similarly to above. \qedhere

\end{proof}

Recall the following notion used in \cite{r1}, \cite{r2}, and \cite{DMT}:

\begin{definition}

We call $T \subseteq \hat{\mathcal{S}}$ a \textit{tree} if $\varnothing \in T$, $T$ is closed under initial segments, and maximal nodes of $T$ are in $\mathcal{S}$. We call $T \subseteq \hat{\mathcal{S}}$ \emph{well-founded} if $T$ does not have any infinite cofinal branches. We call $T$ a \textit{$\vec{\mathcal{W}}$-tree} if $T$ is a tree and for all $\Gamma_{\gamma}(a \upharpoonright k) \in T \cap (\hat{\mathcal{S}} \setminus \mathcal{S})$, where $a \in \mathcal{F}|Y$ and $|\Gamma_{\gamma}(a \upharpoonright k)| < |\Gamma_{\gamma}(a \upharpoonright (k+1))|$, if we denote $l=|\Gamma_{\gamma}(a \upharpoonright k)|$, then $\{\Gamma_{\gamma}(b \upharpoonright j)(l) : \Gamma_{\gamma}(a \upharpoonright k) \sqsubset \Gamma_{\gamma}(b \upharpoonright j) \in T, \ \text{for some $b \in \mathcal{F}|Y$ and $j \leq |b|$}\} \in \mathcal{W}_a$. For a well-founded $\vec{\mathcal{W}}$-tree $T$, $[T] = T \cap \mathcal{S}$ will denote the set of cofinal branches through $T$.

\end{definition}

By \Cref{5.6}, $\hat{\mathcal{S}} \setminus \mathcal{S}$ is a well-founded tree. It is straightforward to check that the set of $[T]$'s, where $T \subseteq \hat{\mathcal{S}}$ is a $\vec{\mathcal{W}}$-tree, generates a filter on $\mathcal{S}$. Let us call this filter $\mathcal{T}$. Following is the key result:

\begin{lemma}

$\mathcal{V} \cong \mathcal{W} = \Gamma_{\gamma}(\mathcal{U} \upharpoonright (\mathcal{F}|Y)) = \mathcal{T}$. Therefore, $\mathcal{V}$ is isomorphic to an ultrafilter of $\vec{\mathcal{W}}$-trees, where $\hat{\mathcal{S}} \setminus \mathcal{S}$ is a well-founded tree, $\vec{\mathcal{W}}=(\mathcal{W}_{\sigma} : \sigma \in \hat{\mathcal{S}} \setminus \mathcal{S})$, and each $\mathcal{W}_{\sigma}$ is isomorphic to exactly one of $\mathcal{U}$, $\mathcal{U}_{\operatorname{min}}$, $\mathcal{U}_{\operatorname{max}}$, or $\mathcal{U}_{\operatorname{minmax}}$.

\end{lemma}

\begin{proof}

It suffices to show that $\mathcal{T}$ contains a cofinal subset of $\mathcal{W}$. Let $Z \in \mathcal{B} \upharpoonright Y$ be arbitrary. Set $S = \{\Gamma_{\gamma}(a) : a \in \mathcal{F}|Z\}$. Let $\hat{S}$ denote the set of initial segments of elements of $S$ (including the empty sequence). The proof will be complete if we can show that $\hat{S}$ is a $\vec{\mathcal{W}}$-tree, since $\mathcal{W} = \langle \{\Gamma_{\gamma}(a) : a \in \mathcal{F}|Z\} : Z \in \mathcal{B} \upharpoonright Y \rangle$ and the set of cofinal branches through $\hat{S}$ is $S$.

Indeed, let $\sigma \in \hat{S} \setminus S$. Pick $a \in \mathcal{F}|Z$ and maximal $k < |a|$ such that $\Gamma_{\gamma}(a \upharpoonright k) = \sigma$. It follows that $a \upharpoonright k$ and $a_{\sigma} \upharpoonright k_{\sigma}$ are mixed by $Y$, and hence by $Z$. Set $l = |\Gamma_{\gamma}(a \upharpoonright k)|$. Let $\mathcal{X} = \{x \in (\mathcal{F}_{(a_{\sigma} \upharpoonright k_{\sigma})})|Z) : (\exists y \in (\mathcal{F}_{(a \upharpoonright k)})|Z) \ \Gamma_{\gamma}((a_{\sigma} \upharpoonright k_{\sigma})^{\smallfrown}x)(l_{\sigma}) = \Gamma_{\gamma}((a\upharpoonright k)^{\smallfrown}y)(l)\}.$ 

By \Cref{prop} part \ref{NWp}, we can find $Z' \in \mathcal{B} \upharpoonright Z$ such that $\mathcal{F}_{(a_{\sigma} \upharpoonright k_{\sigma})}|Z' \subseteq \mathcal{X}$ or $(\mathcal{F}_{(a_{\sigma} \upharpoonright k_{\sigma})}|Z') \cap \mathcal{X} = \varnothing$. The latter case cannot happen since $a \upharpoonright k$ and $a_{\sigma} \upharpoonright k_{\sigma}$ are mixed. Therefore, $$\{\Gamma_{\gamma}(b \upharpoonright j)(l) : \Gamma_{\gamma}(a \upharpoonright k) \sqsubset \Gamma_{\gamma}(b \upharpoonright j) \in T, \ \text{for some $b \in \mathcal{F}|Y$ and $j \leq |b|$}\}$$ $$\supseteq \{\Gamma_{\gamma}((a \upharpoonright k)^{\smallfrown}x)(l) : x \in \mathcal{F}_{(a \upharpoonright k)} | Z\}$$ $$\supseteq \{\Gamma_{\gamma}((a_{\sigma} \upharpoonright k_{\sigma})^{\smallfrown}x)(l_{\sigma}) : x \in \mathcal{F}_{(a_{\sigma} \upharpoonright k_{\sigma})} | Z'\}.$$ Since $\{\Gamma_{\gamma}((a_{\sigma} \upharpoonright k_{\sigma})^{\smallfrown}x)(l_{\sigma}) : x \in \mathcal{F}_{(a_{\sigma} \upharpoonright k_{\sigma})} | Z'\} \in \mathcal{W}_{\sigma}$, we are done. \qedhere

\end{proof}

For $\sigma, \tau \in \hat{\mathcal{S}}$, we define $\sigma \preccurlyeq \tau$ if and only if $\tau \sqsubseteq \sigma$. By \Cref{5.6}, $\preccurlyeq$ is well-founded on $\hat{\mathcal{S}}$. Hence, for $\sigma \in \hat{\mathcal{S}}$, we can define $\operatorname{rk}(\sigma) = \{\operatorname{rk}(\tau) : \tau \prec \sigma\}$. We let $\operatorname{rk}(\mathcal{S})=\operatorname{rk}(\varnothing)$. The following now follows by induction on $\operatorname{rk}(\mathcal{S})$:

\begin{theorem1}

$\mathcal{T}$ is isomorphic to an ultrafilter from the set $\mathcal{C}_{\omega_1}$. In other words, $\mathcal{T}$ is a countable Fubini iterate of $\mathcal{U}$, $\mathcal{U}_{\operatorname{min}}$, $\mathcal{U}_{\operatorname{max}}$, and $\mathcal{U}_{\operatorname{minmax}}$.

\end{theorem1}

As $\mathcal{V} \cong \mathcal{W} = \mathcal{T}$, we see that $\mathcal{V}$ is a countable Fubini iterate of $\mathcal{U}$, $\mathcal{U}_{\operatorname{min}}$, $\mathcal{U}_{\operatorname{max}}$, and $\mathcal{U}_{\operatorname{minmax}}$. This completes the proof of \Cref{main1}. \qedhere

\end{proof}

\begin{theorem} \label{main2}

Let $\mathcal{V}$ be a nonprincipal ultrafilter on a countable index set $I$ with $\mathcal{U} \geq_T \mathcal{V}$. Then $\mathcal{V}$ is Tukey equivalent to an ultrafilter from the set $\{\mathcal{U}, \mathcal{U}_{\operatorname{min}}, \mathcal{U}_{\operatorname{max}},$ $\mathcal{U}_{\operatorname{minmax}}\}$.

\end{theorem}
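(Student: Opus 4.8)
The plan is to combine \Cref{main1} with the results from \Cref{tukey} on the Tukey self-additivity of the four basic ultrafilters. By \Cref{main1}, $\mathcal{V}$ is isomorphic to an ultrafilter in $\mathcal{C}_{\omega_1}$, hence it suffices to prove by induction on $\alpha < \omega_1$ that every ultrafilter in $\mathcal{C}_\alpha$ is Tukey equivalent to one of $\mathcal{U}, \mathcal{U}_{\operatorname{min}}, \mathcal{U}_{\operatorname{max}}, \mathcal{U}_{\operatorname{minmax}}$. Since $\mathrm{RK}$-reduction implies $\leq_T$, and all of $\mathcal{U}_{\operatorname{min}}, \mathcal{U}_{\operatorname{max}}, \mathcal{U}_{\operatorname{minmax}} \leq_T \mathcal{U}$, it is enough to show that every such iterate is Tukey \emph{above} the appropriate basic ultrafilter and Tukey \emph{below} $\mathcal{U}$; equivalently, that the Fubini operation does not produce anything strictly above $\mathcal{U}$ or strictly between the basic classes.

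The key ingredients are: first, that $\mathcal{V}' \cdot \mathcal{W}' \leq_T \mathcal{V}'' \cdot \mathcal{W}''$ whenever $\mathcal{V}' \leq_T \mathcal{V}''$ and $\mathcal{W}' \leq_T \mathcal{W}''$ (monotonicity of the Fubini product in both coordinates, which is standard and is implicitly used throughout \cite{dobtod}); second, that $\lim_{x \to \mathcal{V}} \mathcal{W}_x \leq_T \mathcal{V} \cdot \mathcal{W}$ where $\mathcal{W}$ is any common Tukey upper bound of the $\mathcal{W}_x$ (again standard); and third, the absorption facts $\mathcal{U}^2 \equiv_T \mathcal{U}$, $\mathcal{U}_{\operatorname{min}}^2 \equiv_T \mathcal{U}_{\operatorname{min}}$, $\mathcal{U}_{\operatorname{max}}^2 \equiv_T \mathcal{U}_{\operatorname{max}}$, $\mathcal{U}_{\operatorname{minmax}}^2 \equiv_T \mathcal{U}_{\operatorname{minmax}}$, $\mathcal{U}_{\operatorname{min}} \cdot \mathcal{U}_{\operatorname{max}} \equiv_T \mathcal{U}_{\operatorname{max}} \cdot \mathcal{U}_{\operatorname{min}} \equiv_T \mathcal{U}_{\operatorname{minmax}}$ from \Cref{tukey}, together with $\mathcal{U}_{\operatorname{min}}, \mathcal{U}_{\operatorname{max}} \leq_T \mathcal{U}_{\operatorname{minmax}} \leq_T \mathcal{U}$. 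Given an iterate $\lim_{x\to\mathcal{V}}\mathcal{W}_x$ with $\mathcal{V} \in \mathcal{C}_0$ and each $\mathcal{W}_x$ (by induction) Tukey equivalent to one of the four basic ultrafilters, let $\mathcal{W}$ be the Tukey-largest of these classes that occurs among the $\mathcal{W}_x$ (using that $\mathcal{U}_{\operatorname{min}}$ and $\mathcal{U}_{\operatorname{max}}$ are both below $\mathcal{U}_{\operatorname{minmax}}$, so any finite — in fact any — collection of the four classes has a maximum). Then $\lim_{x\to\mathcal{V}}\mathcal{W}_x \leq_T \mathcal{V}\cdot\mathcal{W} \leq_T \mathcal{U}\cdot\mathcal{U} \equiv_T \mathcal{U}$, and a short case analysis using the absorption identities pins down exactly which of the four classes $\mathcal{V}\cdot\mathcal{W}$ is Tukey equivalent to. For the lower bound, one always has $\mathcal{V}' \leq_T \mathcal{V}' \cdot \mathcal{W}'$ and $\mathcal{W}_x \leq_T \lim_{x\to\mathcal{V}}\mathcal{W}_y$ for $\mathcal{V}$-many $x$, which forces the iterate to be Tukey \emph{at least} the maximum of the classes that appear, so the two bounds meet.

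I expect the main subtlety to lie not in any single estimate but in organizing the induction cleanly: one must show that at \emph{every} stage the iterate collapses back into $\{[\mathcal{U}], [\mathcal{U}_{\operatorname{min}}], [\mathcal{U}_{\operatorname{max}}], [\mathcal{U}_{\operatorname{minmax}}]\}$, and the delicate point is the case of a limit $\lim_{x\to\mathcal{V}}\mathcal{W}_x$ in which different $\mathcal{W}_x$ belong to different (and $\leq_T$-incomparable, e.g. $\mathcal{U}_{\operatorname{min}}$ vs.\ $\mathcal{U}_{\operatorname{max}}$) classes on a $\mathcal{V}$-large set — here one needs that $\mathcal{V}$-mixing of $\mathcal{U}_{\operatorname{min}}$ and $\mathcal{U}_{\operatorname{max}}$ still lands in $[\mathcal{U}_{\operatorname{minmax}}]$ (for $\mathcal{V} \in \{\mathcal{U}_{\operatorname{min}},\mathcal{U}_{\operatorname{max}},\mathcal{U}_{\operatorname{minmax}},\mathcal{U}\}$), which follows by sandwiching between $\mathcal{V}\cdot\mathcal{U}_{\operatorname{minmax}}$ (above, $\leq_T \mathcal{U}$ when $\mathcal{V}=\mathcal{U}$ and $\equiv_T\mathcal{U}_{\operatorname{minmax}}$ when $\mathcal{V}\in\{\mathcal{U}_{\operatorname{min}},\mathcal{U}_{\operatorname{max}},\mathcal{U}_{\operatorname{minmax}}\}$ by the absorption facts) and the relevant basic ultrafilter below. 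Once the bookkeeping is set up, each case reduces to invoking \Cref{tukey} and monotonicity of Fubini products, so no genuinely new Ramsey-theoretic input is needed beyond \Cref{main1}.
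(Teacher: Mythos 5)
Your overall skeleton (reduce to $\mathcal{C}_{\omega_1}$ via \Cref{main1}, then induct and collapse each Fubini stage using the absorption facts in \Cref{tukey}) is the same as the paper's, but the induction step as you wrote it breaks down exactly at the point you flag as ``delicate''. First, the four classes do \emph{not} always have a maximum among those occurring: $\{[\mathcal{U}_{\operatorname{min}}],[\mathcal{U}_{\operatorname{max}}]\}$ has the upper bound $[\mathcal{U}_{\operatorname{minmax}}]$ but no maximum within the collection. Second, and more seriously, your resolution of the mixed case is false: since the (at most four) classes partition the index set and $\mathcal{V}$ is an ultrafilter, exactly one class occurs on a $\mathcal{V}$-large set, the fibers outside that set are irrelevant (the limit is isomorphic to its restriction to $W\times A$ for $W$ in the base ultrafilter), and the limit is Tukey equivalent to $\mathcal{V}\cdot\mathcal{V}'$ for that single class $\mathcal{V}'$. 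So if, say, $\mathcal{V}=\mathcal{U}_{\operatorname{min}}$, the fibers are $\equiv_T\mathcal{U}_{\operatorname{min}}$ on a $\mathcal{V}$-large set and $\equiv_T\mathcal{U}_{\operatorname{max}}$ on its ($\mathcal{V}$-null) complement, the limit lies in $[\mathcal{U}_{\operatorname{min}}]$, not in $[\mathcal{U}_{\operatorname{minmax}}]$ as you claim. For the same reason your two bounds need not meet: taking $\mathcal{W}$ to be the Tukey-largest class occurring \emph{anywhere} (e.g.\ $\mathcal{U}$-fibers on a $\mathcal{V}$-null set, $\mathcal{U}_{\operatorname{min}}$-fibers on the $\mathcal{V}$-large set, $\mathcal{V}=\mathcal{U}_{\operatorname{min}}$) gives the upper bound $[\mathcal{U}]$ while your lower bound only yields $[\mathcal{U}_{\operatorname{min}}]$.

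The repair is precisely the paper's one observation: for $\mathcal{W}=\lim_{i\to\mathcal{W}'}\mathcal{W}_i$ choose $\mathcal{V}'\in\mathcal{C}_0$ and $W\in\mathcal{W}'$ with $\mathcal{W}_i\equiv_T\mathcal{V}'$ for every $i\in W$ (possible because there are only four classes), conclude $\mathcal{W}\equiv_T\mathcal{W}'\cdot\mathcal{V}'$, and then run your case analysis on $\mathcal{W}'\cdot\mathcal{V}'$ with both factors in $\mathcal{C}_0$; your sandwich then closes (e.g.\ $\mathcal{U}\leq_T\mathcal{W}'\cdot\mathcal{V}'\leq_T\mathcal{U}^2\equiv_T\mathcal{U}$ when a factor is $\mathcal{U}$, and $\mathcal{U}_{\operatorname{min}}\cdot\mathcal{U}_{\operatorname{max}}\cong\mathcal{U}_{\operatorname{minmax}}$ by \Cref{3.2}). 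One smaller caveat: the ``monotonicity of the Fubini product in the first coordinate under $\leq_T$'' that you call standard is not an elementary fact (it is essentially the content of the results of \cite{benhamou2023tukeytypesfubiniproducts} quoted in \Cref{tukey}); in the case analysis it is safer to note that all needed first-coordinate comparisons within $\mathcal{C}_0$ are Rudin--Keisler reductions, for which monotonicity of the Fubini product is immediate, and to combine this with the easy second-coordinate Tukey monotonicity you describe.
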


\begin{proof}

By \Cref{main1}, $\mathcal{V}$ is isomorphic to an ultrafilter from the class $\mathcal{C}_{\omega_1}$. It suffices to show that the ultrafilters in $\mathcal{C}_{\alpha}$ are Tukey equivalent to one from the set $\{\mathcal{U}, \mathcal{U}_{\operatorname{min}}, \mathcal{U}_{\operatorname{max}}, \mathcal{U}_{\operatorname{minmax}}\} = \mathcal{C}_0$. This is indeed clear for $\alpha=0$. Now assume that every ultrafilter from $\mathcal{C}_{\beta}$ is Tukey equivalent to one from $\mathcal{C}_0$ for all $\beta < \alpha$. Take $\mathcal{W} \in \mathcal{C}_{\alpha}$. Write $\mathcal{W} = \lim_{i \to \mathcal{W}'} \mathcal{W}_i$, where $\mathcal{W}' \in \mathcal{C}_0$ and each $\mathcal{W}_i \in \mathcal{C}_{\beta}$ for some $\beta < \alpha$. By the induction assumption, each $\mathcal{W}_i$ is Tukey equivalent to an ultrafilter from $\mathcal{C}_0$. Let $\mathcal{V}' \in \mathcal{C}_0$ be the one for which there is some $W \in \mathcal{W}'$ with $\mathcal{W}_i \equiv_T \mathcal{V}'$ for every $i \in W$. It follows that $\mathcal{W} \equiv_T \mathcal{W}' \cdot \mathcal{V}'$, where $\mathcal{W}', \mathcal{V}' \in \mathcal{C}_0$. By \Cref{tukey}, we see that $\mathcal{W}' \cdot \mathcal{V}'$ is Tukey equivalent to an ultrafilter from $\mathcal{C}_0$. \qedhere

\end{proof}

By \Cref{main1}, \Cref{main2}, \Cref{tukey}, \Cref{3.2}, and \Cref{minimality}, we can now give the following description of each Tukey class below $\mathcal{U}$:

\begin{corollary}

$[\mathcal{U}_{\operatorname{min}}]$ and $[\mathcal{U}_{\operatorname{max}}]$ consist of countable Fubini iterates of $\mathcal{U}_{\operatorname{min}}$ and $\mathcal{U}_{\operatorname{max}}$, respectively. $[\mathcal{U}_{\operatorname{minmax}}]$ consists of countable Fubini iterates of $\mathcal{U}_{\operatorname{min}}$ and $\mathcal{U}_{\operatorname{max}}$ that include both of $\mathcal{U}_{\operatorname{min}}$ and $\mathcal{U}_{\operatorname{max}}$ at a step of the iteration. Finally, $[\mathcal{U}]$ consists of countable Fubini iterates of $\mathcal{U}$, $\mathcal{U}_{\operatorname{min}}$, and $\mathcal{U}_{\operatorname{max}}$ that include $\mathcal{U}$ at a step of the iteration.

\end{corollary}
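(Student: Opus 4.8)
The plan is to read the classification off from \Cref{main1} and \Cref{main2}, together with the arithmetic of Fubini products recorded in \Cref{tukey}, \Cref{3.2} and \Cref{3.5}. First note that the four Tukey types $[\mathcal{U}_{\operatorname{min}}]$, $[\mathcal{U}_{\operatorname{max}}]$, $[\mathcal{U}_{\operatorname{minmax}}]$, $[\mathcal{U}]$ are pairwise distinct: $\mathcal{U}_{\operatorname{min}}$ and $\mathcal{U}_{\operatorname{max}}$ are Tukey incomparable by \Cref{tukey}, so $[\mathcal{U}_{\operatorname{min}}] \neq [\mathcal{U}_{\operatorname{max}}]$ and $\mathcal{U}_{\operatorname{minmax}}$ is strictly Tukey above each of them, while $\mathcal{U} >_T \mathcal{U}_{\operatorname{minmax}}$ by \Cref{3.5}. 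By \Cref{main2}, every nonprincipal $\mathcal{V} \leq_T \mathcal{U}$ lies in exactly one of these four classes, so it suffices to decide, for each such $\mathcal{V}$, which class it is in, and then to verify that the four descriptions in the statement partition the nonprincipal ultrafilters Tukey below $\mathcal{U}$.

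Fix a nonprincipal $\mathcal{V} \leq_T \mathcal{U}$. By \Cref{main1} we may assume $\mathcal{V} \in \mathcal{C}_{\omega_1}$. Using $\mathcal{U}_{\operatorname{minmax}} \cong \mathcal{U}_{\operatorname{min}} \cdot \mathcal{U}_{\operatorname{max}}$ (\Cref{3.2}), associativity of the Fubini operation up to isomorphism, and the fact that each of $\mathcal{U}$, $\mathcal{U}_{\operatorname{min}}$, $\mathcal{U}_{\operatorname{max}}$ is isomorphic to its restriction to any set in it, I will first put $\mathcal{V}$ into the normal form of a pruned countable Fubini iterate all of whose iterands belong to $\{\mathcal{U}, \mathcal{U}_{\operatorname{min}}, \mathcal{U}_{\operatorname{max}}\}$ and each of which \emph{occurs essentially}, i.e.\ over an index set in the relevant ultrafilter (so that deleting it changes the isomorphism type); this is the same routine bookkeeping as in \cite{r1,r2,DMT}. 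Two facts then drive the argument. \emph{(Closure.)} For $\mathcal{W} \in \mathcal{C}_0$ we have $\mathcal{W}^2 \equiv_T \mathcal{W}$ by \Cref{tukey}, so, by transfinite induction on the rank of the iterate --- exactly the induction already carried out in the proof of \Cref{main2}, using monotonicity of the Fubini operation with respect to $\leq_T$ --- any countable Fubini iterate of ultrafilters each Tukey below $\mathcal{W}$ is itself Tukey below $\mathcal{W}$. \emph{(Projection.)} Any iterand occurring essentially in $\mathcal{V}$ is a Rudin--Keisler, hence Tukey, image of $\mathcal{V}$, by projecting onto the corresponding coordinate.

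Now the case analysis. If $\mathcal{U}$ occurs in $\mathcal{V}$, then $\mathcal{U} \leq_T \mathcal{V}$ by Projection while $\mathcal{V} \leq_T \mathcal{U}$ by Closure, so $\mathcal{V} \in [\mathcal{U}]$; conversely, if $\mathcal{U}$ does not occur then all iterands are $\leq_T \mathcal{U}_{\operatorname{minmax}}$, whence $\mathcal{V} \leq_T \mathcal{U}_{\operatorname{minmax}} <_T \mathcal{U}$ by Closure and \Cref{3.5}, so $\mathcal{V} \notin [\mathcal{U}]$. Assume $\mathcal{U}$ does not occur. If both $\mathcal{U}_{\operatorname{min}}$ and $\mathcal{U}_{\operatorname{max}}$ occur, then $\mathcal{U}_{\operatorname{min}}, \mathcal{U}_{\operatorname{max}} \leq_T \mathcal{V}$ by Projection, so by the Tukey incomparability of $\mathcal{U}_{\operatorname{min}}$ and $\mathcal{U}_{\operatorname{max}}$ (\Cref{tukey}) $\mathcal{V}$ can be equivalent to neither, hence $\mathcal{V} \in [\mathcal{U}_{\operatorname{minmax}}]$ by \Cref{main2}. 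If only $\mathcal{U}_{\operatorname{min}}$ occurs, then $\mathcal{V}$ is a countable Fubini iterate of copies of $\mathcal{U}_{\operatorname{min}}$, so $\mathcal{V} \leq_T \mathcal{U}_{\operatorname{min}}$ by Closure and $\mathcal{U}_{\operatorname{min}} \leq_T \mathcal{V}$ by Projection, giving $\mathcal{V} \in [\mathcal{U}_{\operatorname{min}}]$; symmetrically for $\mathcal{U}_{\operatorname{max}}$. This exhausts all cases, and reading the implications in reverse shows the four displayed descriptions are exactly the four Tukey classes. (For $[\mathcal{U}_{\operatorname{min}}]$ and $[\mathcal{U}_{\operatorname{max}}]$ one may alternatively invoke \Cref{minimality} applied to the selective --- by \Cref{3.2} --- ultrafilters $\mathcal{U}_{\operatorname{min}}$ and $\mathcal{U}_{\operatorname{max}}$, which identifies each of those Tukey types with the class of all its countable Fubini iterates directly.) I expect the only mildly delicate point to be the normal-form reduction: making precise, in an isomorphism-invariant way, which iterands "occur" in a member of $\mathcal{C}_{\omega_1}$ once $\mathcal{U}_{\operatorname{minmax}}$ has been unfolded and the tree pruned; the two structural facts invoked (monotonicity of Fubini products under $\leq_T$ and coordinate projections being Rudin--Keisler maps) are standard and already implicit in the proof of \Cref{main2}.
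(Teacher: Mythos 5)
Your overall architecture is essentially the derivation the paper intends: the paper offers no separate argument for this corollary, obtaining it by combining \Cref{main1}, \Cref{main2}, \Cref{tukey}, \Cref{3.2} and \Cref{minimality}, and your case analysis (upper bounds from idempotence of the four types plus closure under Fubini iteration, lower bounds from essentially occurring iterands, distinctness of the classes from the Tukey incomparability of $\mathcal{U}_{\operatorname{min}}$ and $\mathcal{U}_{\operatorname{max}}$ together with \Cref{3.5}) is exactly that derivation spelled out; your parenthetical appeal to \Cref{minimality} for $[\mathcal{U}_{\operatorname{min}}]$ and $[\mathcal{U}_{\operatorname{max}}]$ is in fact the route the paper's citation of that theorem suggests, and your ``essential occurrence'' caveat is a genuine informality already present in the corollary's wording.

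One justification, however, fails as literally written: the (Projection) step. For a Fubini limit $\lim_{x \to \mathcal{V}} \mathcal{W}_x$ with varying iterands, projecting onto the second coordinate does not yield any individual $\mathcal{W}_{x_0}$ as an RK image; it yields the $\mathcal{V}$-limit of the $\mathcal{W}_x$'s as points of the ultrafilter space, and the fiber $\{x_0\} \times A$ is a null set, so no coordinate projection recovers $\mathcal{W}_{x_0}$. The inequality you actually need is nevertheless true and has a short direct proof: if $\mathcal{Z} \leq_T \mathcal{W}_x$ for all $x$ in some $A \in \mathcal{V}$, witnessed by monotone cofinal maps $h_x : \mathcal{W}_x \to \mathcal{Z}$, then $X \mapsto \bigcup \{h_x(X_x) : x \in A, \ X_x \in \mathcal{W}_x\}$ (where $X_x$ denotes the fiber of $X$ over $x$) is a monotone cofinal map from $\lim_{x \to \mathcal{V}} \mathcal{W}_x$ to $\mathcal{Z}$, so $\mathcal{Z} \leq_T \lim_{x \to \mathcal{V}} \mathcal{W}_x$; alternatively, one can first run the rank induction from the proof of \Cref{main2}, which replaces the limit by a two-step product $\mathcal{W}' \cdot \mathcal{V}'$ with $\mathcal{W}', \mathcal{V}' \in \mathcal{C}_0$, and for a two-step product with constant second factor the coordinate projections do give both factors as RK images. (A similar fiberwise-choice argument, not a purely formal one, is what underwrites the ``monotonicity of the Fubini operation with respect to $\leq_T$'' you invoke for Closure, but that statement is correct.) With that repair, and with ``includes at a step'' read modulo large sets at each level as you indicate, your argument is correct and matches the intended proof.
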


\bibliographystyle{amsalpha}
\bibliography{refs}

\end{document}